\documentclass{article}
\usepackage{threeparttable}
\usepackage{multirow}
\usepackage[utf8]{inputenc}
\usepackage{hyperref}
\usepackage{amsmath}
\usepackage{amssymb}
\usepackage{graphicx}
\usepackage{amsthm}
\usepackage{float}
\usepackage{caption}
\usepackage{subcaption}
\usepackage{bm}
\usepackage{algorithm}  
\usepackage{algorithmic} 

\newtheorem{theorem}{Theorem}[section]
\newtheorem{lemma}[theorem]{Lemma}
\theoremstyle{definition}

\theoremstyle{remark}
\newtheorem{remark}[theorem]{Remark}
\numberwithin{equation}{section}
\title{A Locking-free and Loosely Coupled Robin-Robin Scheme for Fluid-Poroelasticity Interaction}
\author{
  Wenlong He\thanks{School of Mathematics and Statistics, Wuhan University, Wuhan 430072, China.}
  \and
  Thomas Wick\thanks{Institute of Applied Mathematics, Leibniz University Hannover, Welfengarten 1, 30167 Hannover, Germany. T.Wick and X.Yue are partially supported by DFG (German Research Foundation, No. 548064929).}
  \and
  Xiaohe Yue\thanks{Corresponding author. School of Mathematical Sciences, East China Normal University, Shanghai 200241, China; Institute of Applied Mathematics, Leibniz University Hannover, Welfengarten 1, 30167 Hannover, Germany. Email: yuexiaohe666@gmail.com.}
  \and
  Jiwei Zhang\thanks{School of Mathematics and Statistics, and Hubei Key Laboratory of Computational Science, Wuhan University, Wuhan 430072, China. J.Zhang is supported by NSFC (No. 12171376) and the Fundamental Research Funds for the Central Universities (No. 2042021kf0050) and WHU-2022-SYJS-0002.}
  \and
  Haibiao Zheng\thanks{School of Mathematical Sciences, Ministry of Education Key Laboratory of Mathematics and Engineering Applications, Shanghai Key Laboratory of PMMP, East China Normal University, Shanghai, 200241, China. H. Zheng is partially supported by NSFC (No.12471406) and Science and Technology Commission of Shanghai Municipality (No. 22DZ2229014).}
}

\date{}
\begin{document}
\maketitle
\vspace{1em}

\begin{abstract}
We study a fluid-poroelasticity interaction (FPSI) problem coupling the unsteady Stokes equations with the fully dynamic Biot system. A major challenge in such problems is to design partitioned schemes that remain robust in locking-related parameter regimes while preserving the physical interface coupling structure.
To address this issue, we introduce two auxiliary variables and reformulate the Biot system as a four-field problem consisting of a dynamic Stokes-like system coupled with a diffusion equation. Crucially, this reformulation preserves the original interface conditions. Based on Robin–Robin transmission conditions with explicitly lagged interface data, we construct a fully decoupled scheme in which the fluid and poroelastic subproblems can be solved independently and in parallel at each time step, without sub-iterations.
We prove that the resulting method is unconditionally stable and derive optimal-order error estimates in the $H^1$-norm. The analysis further shows that the scheme is robust with respect to extreme poroelastic parameters and avoids the locking effects inherent in standard formulations. Numerical experiments confirm the theoretical convergence results and demonstrate the locking-robust performance of the proposed method.
\end{abstract}
\noindent\textbf{2020 Mathematics Subject Classification:} 65M12, 74F10, 74L15 \\
\textbf{Keywords:} fluid-poroelasticity interaction, loosely coupled scheme, unconditional stability, error estimate, locking-free

\section{Introduction}
	Fluid-poroelastic structure interaction (FPSI) problems have gained significant prominence due to their extensive applications across diverse fields, including geosciences, biomedical engineering, petroleum extraction, and the study of wave-vegetation interactions \cite{calo2008multiphysics,cuisiat1998petroleum,feenstra2009drug}. Poroelastic materials serve as a fundamental model for natural substances such as soil and biological tissues, as well as anthropogenic materials like cement. By establishing a coupled FPSI framework, one can capture the sophisticated interplay between free flow, filtrating interstitial flow, and the elastodynamics of the solid skeleton. However, the numerical solution of FPSI problems remains a formidable challenge, primarily due to the intricate interface coupling mechanisms, the structure on the governing equations, and the presence of physical parameters that span multiple orders of magnitude.

    In this work, we employ the time-dependent Stokes equations to characterize the free flow, while the poroelastic medium is described by the fully dynamic Biot system \cite{biot1941general, biot1955theory}. Within the poroelastic domain, the momentum conservation equation dictates the elastic deformation, and the Darcy equation 
    governs the seepage flow. The Biot system is coupled with the Stokes equations through physically consistent interface conditions, which facilitate the study of how free flow induces deformation and filtration within the poroelastic structure, and conversely how these processes modulate surrounding fluid field \cite{ambartsumyan2018lagrange, bukavc2015partitioning, guo2025fully,oyekole2020second, seboldt2021numerical}. From the perspective of multiphysics coupling, FPSI is particularly demanding as its interface conditions inherit the complexities of both the Stokes-Darcy problem (concerning fluid-filtration exchange)  \cite{discacciati2007robin, mu2010decoupled, vassilev2009coupling} and the classical fluid-structure interaction (FSI) problem (concerning stress and velocity continuity) \cite{guo2025fully,hou2012numerical, richter2017fluid,richter2010finite}. Designing a numerical scheme that simultaneously satisfies these diverse interface constraints while maintaining computational efficiency is a non-trivial task. Furthermore, the model is highly sensitive to parameter variations. In the Biot system, the well-known locking phenomenon--often triggered by specific ranges of physical parameters such as the incompressibility limit or low permeability--has been extensively studied in standalone scenarios \cite{oyarzua2016locking,phillips2009overcoming,yi2017study, zhao2025unconditionally}. However, the investigation of locking effects within the fully coupled FPSI  framework remains, to our knowledge, unexplored in the existing literature.

    Currently, numerical strategies for these coupled systems are broadly categorized into monolithic and partitioned methods. While monolithic solvers are robust in handling tight coupling, they are often computationally prohibitive and pose significant challenges for theoretical analysis \cite{ambartsumyan2018lagrange, bukavc2024computational, wen2020strongly}. Partitioned or decoupled methods, which solve the subproblems in a modular fashion, have become increasingly popular due to their flexibility in implementation and analysis. Among these, the Robin-Robin type domain decomposition has demonstrated remarkable robustness across a variety of multiphysics problems \cite{badia2009coupling, chen2011parallel, parrow2026stability, seboldt2021numerical}.
    Significant advancements have been made in the numerical treatment of FPSI problems.
    By introducing interface Lagrange multipliers, Li et al. \cite{li2024augmented} established the existence, uniqueness, and optimal error estimates for a fully mixed Navier-Stokes-Biot coupling that accounts for the Beavers–Joseph–Saffman condition on nonmatching grids. Quaini et al. \cite{cesmelioglu2016optimization} proposed a decoupling strategy based on a residual updating technique for the Stokes–Biot system, which solves the coupled problem through a least-squares optimization framework to ensure stable and efficient interface treatments. Buka\v c et al. \cite{seboldt2021numerical} proposed loosely coupled Robin-type schemes for moving domains, further splitting the Biot system into mechanics and Darcy subproblems. By examining the regularity of the Biot model and the algebraic structure of the three-field formulation, Yi \cite{yi2017study} identified the causes of numerical instabilities and developed new mixed finite elements that are robust against both Poisson locking and pressure oscillations. Oyarz\' ua \cite{oyarzua2016locking} introduced a new three-field formulation for the Biot system involving displacement, pressure, and pseudo total pressure, where the stability and error estimates were established independently of the Lam\'e constants using a Fredholm argument. 
    
    Despite these developments, parameter-robust decoupled algorithms that address locking within the fully coupled FPSI framework remain largely unexplored. In particular, for Robin--Robin type partitioned schemes, a central difficulty is how to introduce a locking-free reformulation of the poroelasticity equations without destroying the original interface coupling structure.
    
    Our previous work \cite{guo2025fully} developed a fully parallel Robin--Robin decoupling method for the standard Stokes--Biot system and established its unconditional stability. The present work goes substantially beyond that result. We introduce a four-variable reformulation of the fully dynamic Biot system by means of two auxiliary variables. This reformulation can be interpreted as a coupling between a dynamic Stokes-like system and a diffusion equation, and is specifically designed to improve robustness with respect to locking-related extreme parameters. More importantly, the reformulation preserves the original interface conditions without modification, which makes it possible to embed it into the FPSI coupling framework in a consistent way.
    
    Based on this observation, we construct Robin--Robin transmission conditions that yield a fully decoupled, parallel time-stepping scheme, where the fluid and poroelastic subproblems can be solved independently at each time step without sub-iterations. We further prove the equivalence between the reformulated decoupled system and the original coupled Stokes--Biot model under suitable interface data. In addition, we establish unconditional stability and derive optimal-order error estimates for the fully discrete scheme. The numerical experiments confirm the theoretical convergence results and demonstrate robust performance in parameter regimes associated with Poisson locking and early-time pressure oscillations.
    
    The main contributions of this work are threefold:
    \begin{itemize}
    \item we derive a locking-aware four-variable reformulation of the fully dynamic Biot system that preserves the original FPSI interface conditions;
    \item we design a fully decoupled Robin--Robin scheme for the coupled Stokes--Biot problem and prove its equivalence to the original coupled formulation;
    \item we establish unconditional stability and optimal error estimates, and validate the locking-robust behavior by numerical experiments.
    \end{itemize}

    The rest of this manuscript is organized as follows. In section 2, we introduce the governing equations, boundary and interface conditions for the fluid-poroelastic coupling system, alongside a discussion on the locking phenomenon inherent in the poroelastic model. Section 3 presents the derivation of the Robin-Robin interface conditions and a four-variable Biot system, which together yield the fully decoupled numerical scheme. In section 4, the unconditional stability of the proposed scheme is rigorously established. Furthermore, section 5 derives the optimal error estimate in the $H^{1}$-norm and demonstrates that the developed approach is locking-free. Finally, several numerical experiments are provided in section 6 to illustrate the convergence, robustness, and locking-free performance of our algorithm.
\section{Model}
 We study a coupled Stokes-Biot system that describes the interaction between a free fluid and a fluid-saturated, homogeneous, isotropic and linear elastic porous medium. Let $\Omega \subset \mathbb{R}^d$ ($d=2, 3$) be a bounded polygonal ($d=2$) or polyhedral ($d=3$) domain, partitioned into two non-overlapping subdomains: a fluid domain $\Omega_f$ and a poroelastic domain $\Omega_p$. The interface separating the two regions is denoted by $\Gamma = \partial\Omega_f \cap \partial\Omega_p$. The external boundary of the entire domain is decomposed as $\partial\Omega = \Gamma_f \cup \Gamma_p$, where $\Gamma_f = \partial\Omega_f \setminus \Gamma$ and $\Gamma_p = \partial\Omega_p \setminus \Gamma$. Furthermore, let $\mathbf{n}_{f}$ and $\mathbf{n}_{p}$ denote the outward unit normal vectors on $\partial\Omega_{f}$ and $\partial\Omega_{p}$, respectively. For any function $f(t)$, we denote the first and second derivatives by $\partial_{t}f$ and $\partial_{tt}^{2}f$ separately.
\subsection{Fluid equations}
 In the fluid region $\Omega_f$, the flow is governed by the unsteady Stokes equations:
	\begin{alignat}{2}
		\rho_{f}\partial_{t}\mathbf{v}_{f}-\nabla\cdot\bm{\sigma}_{f}(\mathbf{v}_{f},p_{f})&=\mathbf{f}_{f},&&\qquad \mbox{in } \Omega_{f}\times (0,T],\label{2.1}\\
		\nabla\cdot\mathbf{v}_{f}&=\phi_{f},&&\qquad \mbox{in } \Omega_{f}\times (0,T],\label{2.2}
	\end{alignat}
    where $\mathbf{v}_{f}$ and $p_{f}$ denote the fluid velocity and pressure, respectively. The fluid stress tensor $\bm{\sigma}_{f}$ and the deformation strain tensor $\varepsilon(\mathbf{v}_{f})$ are defined as:
	$$\bm{\sigma}_{f}(\mathbf{v}_{f},p_{f}) = 2\mu_{f}\varepsilon(\mathbf{v}_{f}) - p_{f}\mathbf{I} \quad \text{and} \quad \varepsilon(\mathbf{v}_{f}) = \frac{1}{2}(\nabla\mathbf{v}_{f} + \nabla^{\top}\mathbf{v}_{f}),$$
    where $\mathbf{I}$ represents the identity tensor. Here, $\rho_f > 0$ is the fluid density, $\mu_f > 0$ is the dynamic viscosity. The source terms $\mathbf{f}_f$ and $\phi_f$ represent the body force and the mass source, respectively.
    
	To close the system, we prescribe the initial and homogeneous boundary conditions for the Stokes equations as
	\begin{alignat}{2}
		\mathbf{v}_{f}&=\mathbf{0},&&\qquad \mbox{on } \Gamma_{f}^{D}\times (0,T],\label{2.3}\\
		\bm{\sigma}_{f}\mathbf{n}_{f}&=\mathbf{0},&&\qquad \mbox{on } \Gamma_{f}^{N}\times (0,T],\label{2.4}\\
		\mathbf{v}_{f}&=\mathbf{v}_{f,0}, &&\qquad \mbox{in } \Omega_{f}\times\{t=0\},\label{2.5}
	\end{alignat}
	where the external boundary $\Gamma_{f}$ is decomposed into $\Gamma_{f}^{D}$ and $\Gamma_{f}^{N}$, representing the Dirichlet (no-slip) and Neumann (traction-free) boundaries, respectively. We assume that $|\Gamma_{f}^{D}| > 0$ to ensure the well-posedness of the velocity field. Here, $\mathbf{v}_{f,0}$ denotes the given initial velocity distribution in $\Omega_f$.
\subsection{Poroelastic equations}
 	In the poroelastic region $\Omega_p$, the coupled flow and deformation are governed by the fully dynamic Biot system:
	\begin{alignat}{2}
	\rho_{p}\partial_{tt}^{2}\mathbf{u}_{p}-\nabla\cdot\bm{\sigma}_{p}(\mathbf{u}_{p},p_{p})&=\mathbf{f}_{p},&&\qquad \mbox{in } \Omega_{p}\times (0,T],\label{2.6}\\
		c_{0}\partial_{t}p_{p}+\alpha\partial_{t}\nabla\cdot\mathbf{u}_{p}-\nabla\cdot [\mu_{f}^{-1}K(\nabla p_{p}-\rho_{f}\mathbf{g})]&=\phi_{p},&&\qquad \mbox{in } \Omega_{p}\times (0,T],\label{2.7}
	\end{alignat}
where $\mathbf{u}_p$ and $p_p$ represent the solid displacement and the pore pressure, respectively. The total stress tensor $\bm{\sigma}_p$ is defined according to the effective stress principle:
$$\bm{\sigma}_{p}(\mathbf{u}_{p},p_{p}) = \bm{\sigma}_{e}(\mathbf{u}_{p}) - \alpha p_{p}\mathbf{I}, \quad \bm{\sigma}_{e}(\mathbf{u}_{p}) = 2\mu_{p}\varepsilon(\mathbf{u}_{p}) + \lambda_{p}(\nabla\cdot\mathbf{u}_{p})\mathbf{I},$$
where $\bm{\sigma}_e$ denotes the elastic effective stress tensor and $\varepsilon(\mathbf{u}_p) = \frac{1}{2}(\nabla\mathbf{u}_p + \nabla^{\top}\mathbf{u}_p)$ 
is the linearized strain tensor. The source terms are denoted by $\mathbf{f}_p$ and $\phi_p$, while $\mathbf{g}$ represents the gravitational acceleration. The permeability tensor $K(\mathbf{x})$ is assumed to be symmetric and uniformly positive definite, satisfying:
$$K_1 |\bm{\zeta}|^2 \le K(\mathbf{x})\bm{\zeta}\cdot \bm{\zeta} \le K_2 |\bm{\zeta}|^2, \quad \forall \bm{\zeta} \in \mathbb{R}^d, \text{ a.e. } \mathbf{x} \in \Omega_p,$$
for some positive constants $K_1$ and $K_2$. The physical coefficients $\rho_p$, $\alpha$, and $c_0 \ge 0$ denote the structure density, the Biot--Willis coefficient, and the constrained specific storage coefficient, respectively. Finally, the Lam\'e parameters $\lambda_p$ and $\mu_p$ are related to the Young's modulus $E$ and Poisson's ratio $\nu$ by:$$\lambda_p = \frac{E\nu}{(1+\nu)(1-2\nu)}, \qquad \mu_p = \frac{E}{2(1+\nu)}.$$

 To close the system \eqref{2.6}-\eqref{2.7}, we impose the following initial and homogeneous boundary conditions for the Biot equations:
	\begin{alignat}{2}
		\mathbf{u}_{p}&=\mathbf{0},&&\quad \mbox{on } \Gamma_{p}^{D}\times (0,T],\label{2.8}\\
		\bm{\sigma}_{p}\mathbf{n}_{p}&=\mathbf{0},&&\quad \mbox{on } \Gamma_{p}^{N}\times (0,T],\label{2.9}\\
		p_{p}&=0,&&\qquad \mbox{on } \tilde{\Gamma}_{p}^{D}\times (0,T],\label{2.10}\\
		-\mu_{f}^{-1}K(\nabla p_{p}-\rho_{f}\mathbf{g})\cdot\mathbf{n}_{p}&=0,&&\quad \mbox{on } \tilde{\Gamma}_{p}^{N}\times (0,T],\label{2.11}\\
		\partial_{t}\mathbf{u}_{p}=\mathbf{u}_{v,0},\quad\mathbf{u}_{p}=\mathbf{u}_{p,0},\quad p_{p}&=p_{p,0},&&\quad \mbox{in } \Omega_{p}\times\{t=0\},\label{2.12}
	\end{alignat}
	where the poroelastic boundary $\Gamma_{p}$ is decomposed as $\Gamma_{p} = \Gamma_{p}^{D}\cup\Gamma_{p}^{N} = \tilde{\Gamma}_{p}^{D}\cup\tilde{\Gamma}_{p}^{N}$. Here, $\Gamma_{p}^{D}$ and $\Gamma_{p}^{N}$ represent the Dirichlet and Neumann boundaries for the solid displacement, while $\tilde{\Gamma}_{p}^{D}$ and $\tilde{\Gamma}_{p}^{N}$ correspond to the drainage and no-flow boundaries for the pore pressure, respectively. For simplicity, we assume $\Gamma_{p}^{D} = \tilde{\Gamma}_{p}^{D}$ in the following. The initial state is defined by the given functions $\mathbf{u}_{p,0}$, $\mathbf{v}_{p,0}$, and $p_{p,0}$.
\subsection{Coupling conditions}
	To couple the fluid model and the Biot model, we impose a set of physically consistent interface conditions on the fluid–poroelastic interface $\Gamma$ for $t \in (0, T]$. These conditions include mass conservation, balance of normal stress, and the Beavers-Joseph-Saffman (BJS) condition to account for the tangential slip \cite{mikelic2000interface}:
	\begin{alignat}{2}
		\mathbf{v}_{f}\cdot\mathbf{n}_{f}&=-(\partial_{t}\mathbf{u}_{p}-\mu_{f}^{-1}K(\nabla p_{p}-\rho_{f}\mathbf{g}))\cdot\mathbf{n}_{p},&&\qquad \mbox{on } \Gamma\times (0,T],\label{2.13}\\
		\bm{\sigma}_{f}\mathbf{n}_{f}\cdot\mathbf{n}_{f}&=-p_{p},&&\qquad \mbox{on } \Gamma\times (0,T],\label{2.14}\\
		\bm{\sigma}_{f}\mathbf{n}_{f}+\bm{\sigma}_{p}\mathbf{n}_{p}&=\mathbf{0},&&\qquad \mbox{on } \Gamma\times (0,T],\label{2.15}\\
		-\mu_{f}\gamma\sqrt{K_{j}^{-1}}(\mathbf{v}_{f}-\partial_{t}\mathbf{u}_{p})\cdot\bm{\tau}_{f,j}&=\bm{\sigma}_{f}\mathbf{n}_{f}\cdot\bm{\tau}_{f,j}~\text{for}~ j = 1,\cdots, d - 1,&&\qquad \mbox{on } \Gamma\times (0,T].\label{2.16}
	\end{alignat}
Here, equation \eqref{2.13} represents the conservation of mass, ensuring the continuity of normal flux across the interface. Equation \eqref{2.14} enforces the balance of normal force, while equation \eqref{2.15} represents the equilibrium of total stress. The BJS condition \eqref{2.16} describes the tangential velocity jump, where ${\bm{\tau}_{f,j}}$ denotes an orthonormal set of unit tangent vectors on $\Gamma$. The parameter $K_{j}$ is the component of the permeability tensor in the $j$-th tangential direction, and $\gamma > 0$ is a dimensionless friction coefficient determined experimentally.
\begin{remark}
       It is known that the Biot system, when discretized in space using continuous Galerkin finite element methods, exhibits two types of locking phenomena induced by extreme values of physical parameters: 

       (i) Poisson locking \cite{phillips2009overcoming,yi2017study} arises in the incompressible limit $\lambda_p \to \infty$, where the displacement field satisfies $\nabla \cdot \mathbf{u} \to 0$. Low-order conforming elements, such as linear triangles or bilinear quadrilaterals, possess very few divergence-free modes. For example, in 2D bilinear elements, enforcing zero divergence and continuity reduces the effective degrees of freedom to constants. Consequently, nonconstant boundary deformations cannot be captured, causing overly stiff numerical responses and potential nonphysical oscillations in the stress field, highlighting a major challenge for standard finite element discretizations under near-incompressibility.

       (ii) Pressure oscillations represent a more severe form of locking and typically arise at early times under specific parameter regimes. As shown by Phillips and Wheeler \cite{phillips2009overcoming}, when $c_0=0$, the permeability is very small, and small time steps are used, the discrete flow equation enforces an almost divergence-free displacement, leading to nonphysical pressure oscillations. From an algebraic perspective, Yi \cite{yi2017study} further attributed this phenomenon to the incompatibility between the displacement and pressure spaces, which renders the discrete system ill-posed up to spurious pressure modes.
\end{remark}

\section{Numerical method}
In this section, we develop a fully decoupled and locking-free numerical scheme for the Stokes--Biot system. We begin with deriving a four-variable formulation for the Biot equations, which is specifically designed to possess the potential to overcome locking phenomena in nearly incompressible poroelastic media. After that, we detail the construction of the Robin--Robin type interface conditions, which serve as the foundation for decomposing the global coupled system into independent subproblems. By leveraging these interface conditions, the system is fully decoupled into fluid and poroelastic subsystems. Finally, a fully discrete decoupled algorithm is presented, ensuring both computational efficiency and physical accuracy.
\subsection{Four-variable formulation for the Biot equations}
 To reveal the multiphysics process of the Biot model and to construct an intrinsic mechanism that circumvents locking phenomena, we introduce two auxiliary variables $\mathbf{v}_{p}=\partial_{t}\mathbf{u}_{p}$ and $\beta_{p}=\alpha p_{p}-\lambda_{p}\nabla\cdot\mathbf{u}_{p}$ and reformulate the Biot model \eqref{2.6}-\eqref{2.7} into
\begin{alignat}{2}
\mathbf{v}_{p}-\partial_{t}\mathbf{u}_{p}&=\mathbf{0},&&~~ \mbox{in } \Omega_{p}\times (0,T],\label{2.17}\\
\rho_{p}\partial_{t}\mathbf{v}_{p}-2\mu_{p}\nabla\cdot\varepsilon(\mathbf{u}_{p})+\nabla\beta_{p}&=\mathbf{f}_{p},&&~~ \mbox{in } \Omega_{p}\times (0,T],\label{2.18}\\
\frac{1}{\lambda}_{p}\beta_{p}+\nabla\cdot\mathbf{u}_{p}&=\frac{\alpha}{\lambda}_{p}p_{p},&&~~ \mbox{in } \Omega_{p}\times (0,T],\label{2.19}\\	
 \big(c_{0}+\frac{\alpha^{2}}{\lambda_{p}}\big)\partial_{t}p_{p}-\frac{\alpha}{\lambda_{p}}\partial_{t}\beta_{p}-\nabla\cdot [\mu_{f}^{-1}K(\nabla p_{p}-\rho_{f}\mathbf{g})]&=\phi_{p},&&~~ \mbox{in } \Omega_{p}\times (0,T].\label{2.20}
\end{alignat}
\begin{remark}
The reformulated system provides a natural mechanism for alleviating the two locking-related difficulties discussed in Remark 2.1.

(i) By introducing the total-pressure-type variable $\beta_p$, the volumetric constraint is separated from the ill-conditioned displacement equation. In the nearly incompressible regime $\lambda_p \to \infty$, the resulting structure resembles a generalized Stokes system with an explicit incompressibility constraint, which explains why mixed finite element pairs satisfying the inf--sup condition are expected to behave more robustly with respect to Poisson locking.

(ii) For parameter regimes with $c_0=0$, small permeability, and small time step size, the reformulated pressure equation no longer enforces an approximately divergence-free displacement in the same way as the standard formulation. This observation helps explain why the present formulation is less prone to spurious early-time pressure oscillations.
\end{remark}

\subsection{Robin--Robin type interface conditions}
Let $\mathcal{M}_{*}(\mathbf{v})=\sum_{j=1}^{d-1}\mathbf{v}\cdot\bm{\tau}_{*,j}$ denote the tangential projection of the velocity $\mathbf{v}$ onto the interface $\Gamma$, where the subscript $* \in \{f, p\}$ identifies the fluid and poroelasticity medium phase, respectively. Furthermore, we define $c_{BJS} = \mu_{f}\gamma\sqrt{K^{-1}}$ as the Beavers--Joseph--Saffman (BJS) slip coefficient. To facilitate the decoupling of the Stokes-Biot system, we introduce Robin-type transmission conditions on the interface $\Gamma$. Specifically, given the artificial parameters $L_{1}, L_{2}$ and $L_{3} > 0$, we assume that the fluid subproblem satisfies the following Robin-type conditions on the interface $\Gamma$:
\begin{align}
&L_{1}\mathbf{v}_{f}\cdot\mathbf{n}_{f}+\bm{\sigma}_{f}\mathbf{n}_{f}\cdot\mathbf{n}_{f}
= R_{1},\\
 &c_{BJS}\mathcal{M}_{f}(\mathbf{v}_{f})+\mathcal{M}_{f}(\bm{\sigma}_{f}\mathbf{n}_{f})
 = -R_{2}.
 \end{align}
Simultaneously, the following Robin-type boundary conditions are constructed for the Biot subproblem: 
\begin{align}
&L_{2}\partial_{t}\mathbf{u}_{p}\cdot\mathbf{n}_{p}+\bm{\sigma}_{p}\mathbf{n}_{p}\cdot\mathbf{n}_{p}= R_{3},\\
&c_{BJS}\mathcal{M}_{p}(\partial_{t}\mathbf{u}_{p})+\mathcal{M}_{p}(\bm{\sigma}_{p}\mathbf{n}_{p})
= -R_{4},\\
&L_{3}p_{p}+\mu_{f}^{-1}K\big(\nabla p_{p}-\rho_{f}\mathbf{g})\cdot\mathbf{n}_{p}
= R_{5},\label{2.25}
\end{align}
where $R_{i} (i = 1,2,\cdots,5$) are some known functions defined on the interface 
$\Gamma$.

The standard function space notation is adopted, with precise definitions provided in \cite{brenner2008mathematical, ciarlet2002finite}. For example, the standard inner products on $L^2(\Omega)$ and $L^2(\partial \Omega)$ are denoted by $(\cdot, \cdot)$ and $\langle \cdot, \cdot \rangle$, respectively. For any Banach space $B$, we define $\mathbf{B} = [B]^d$ and denote $\mathbf{B}'$ by its dual space. Before starting the weak formulation, we define the following function spaces for the fluid and poroelastic subproblems:
\begin{align*}
&\mathbf{V}_{f}:=\{\mathbf{w}_{f}\in \mathbf{H}^{1}(\Omega_{f}):\mathbf{w}_{f}|_{\Gamma_{f}^{D}}=\mathbf{0}\},\qquad M_{f}:=L^{2}(\Omega_{f}),\\ 
&\mathbf{V}_{p}:=\{\mathbf{w}_{p}\in \mathbf{H}^{1}(\Omega_{p}):\mathbf{w}_{p}|_{\Gamma_{p}^{D}}=\mathbf{0}\},\qquad M_{p}:=\{q_{p}\in L^{2}(\Omega_{p}):q_{p}|_{\tilde{\Gamma}_{p}^{D}}=0\},\\
&W_{p}:=\{q_{p}\in H^{1}(\Omega_{p}):q_{p}|_{\Gamma_{p}^{D}}=0\}.
\end{align*}
Based on the reformulated Biot system and the introduced Robin conditions, the weak form of the Stokes problem reads: 
For any $(\mathbf{w}_{f},q_{f})\in\mathbf{V}_{f}\times M_{f}$, find $(\mathbf{v}_{f},p_{f})\in C^{1}([0,T];\mathbf{V}_{f})\times C^{0}([0,T];M_{f})$ 
such that:
\begin{align}
&\rho_{f}\big(\partial_{t}\mathbf{v}_{f},\mathbf{w}_{f}\big)_{\Omega_{f}}
+2\mu_{f}\big(\varepsilon(\mathbf{v}_{f}),\varepsilon(\mathbf{w}_{f})\big)_{\Omega_{f}}
- \big(p_{f},\nabla\cdot\mathbf{w}_{f}\big)_{\Omega_{f}} \label{3.1}\\
&\quad+L_{1}\langle\mathbf{v}_{f}\cdot\mathbf{n}_{f},\mathbf{w}_{f}\cdot\mathbf{n}_{f}\rangle_{\Gamma}+c_{BJS}\langle \mathcal{M}_{f}(\mathbf{v}_{f}),\mathcal{M}_{f}(\mathbf{w}_{f})\rangle_{\Gamma}\nonumber\\
&=\big(\mathbf{f}_{f},\mathbf{w}_{f}\big)_{\Omega_{f}}
+\langle R_{1},\mathbf{w}_{f}\cdot\mathbf{n}_{f}\rangle_{\Gamma}
-\langle R_{2},\mathcal{M}_{f}(\mathbf{w}_{f})\rangle_{\Gamma},\nonumber\\ &\big(\nabla\cdot\mathbf{v}_{f},q_{f}\big)_{\Omega_{f}}=\big(\phi_{f},q_{f}\big)_{\Omega_{f}}.\label{3.2}
\end{align}
Similarly on the Biot system: For any $(\mathbf{z}_{p},\mathbf{w}_{p},\varphi_{p},\psi_{p})\in\mathbf{V}_{p}\times\mathbf{V}_{p}\times M_{p}\times W_{p}$, find $(\mathbf{v}_{p},\mathbf{u}_{p},\beta_{p},p_{p})\in C^{1}([0,T];\mathbf{V}_{p})\times C^{1}([0,T];\mathbf{V}_{p})\times C^{1}([0,T];M_{p})\times C^{1}([0,T];W_{p})$ 
such that:
\begin{align}
&\big(\mathbf{v}_{p},\mathbf{z}_{p}\big)_{\Omega_{p}}-\big(\partial_{t}\mathbf{u}_{p},\mathbf{z}_{p}\big)_{\Omega_{p}}=0,\label{3.3}\\
&\rho_{p}\big(\partial_{t}\mathbf{v}_{p},\mathbf{w}_{p}\big)_{\Omega_{p}}+2\mu_{p}\big(\varepsilon(\mathbf{u}_{p}),\varepsilon(\mathbf{w}_{p})\big)_{\Omega_{p}}-\big(\beta_{p},\nabla\cdot\mathbf{w}_{p}\big)_{\Omega_{p}}\label{3.4}\\
&\quad+L_{2}\langle\partial_{t}\mathbf{u}_{p}\cdot\mathbf{n}_{p},\mathbf{w}_{p}\cdot\mathbf{n}_{p}\rangle_{\Gamma}+c_{BJS}\langle\mathcal{M}_{p}(\partial_{t}\mathbf{u}_{p}),\mathcal{M}_{p}(\mathbf{w}_{p})\rangle_{\Gamma}\nonumber\\
&=\big(\mathbf{f}_{p},\mathbf{w}_{p}\big)_{\Omega_{p}}
+\langle R_{3},\mathbf{w}_{p}\cdot\mathbf{n}_{p}\rangle_{\Gamma}
-\langle R_{4},\mathcal{M}_{p}(\mathbf{w}_{p})\rangle_{\Gamma},\nonumber\\
&\frac{1}{\lambda_{p}}\big(\beta_{p},\varphi_{p}\big)_{\Omega_{p}}+\big(\nabla\cdot\mathbf{u}_{p},\varphi_{p}\big)_{\Omega_{p}}=\frac{\alpha}{\lambda_{p}}\big(p_{p},\varphi_{p}\big)_{\Omega_{p}},\label{3.5}\\
&\big(c_{0}+\frac{\alpha^{2}}{\lambda_{p}}\big)\big(\partial_{t}p_{p},\psi_{p}\big)_{\Omega_{p}}-\frac{\alpha}{\lambda_{p}}\big(\partial_{t}\beta_{p},\psi_{p}\big)_{\Omega_{p}}\label{3.6}\\
&\quad +\big(\mu_{f}^{-1}K(\nabla p_{p}-\rho_{f}\mathbf{g}),\nabla\psi_{p}\big)_{\Omega_{p}}+L_{3}\langle p_{p},\psi_{p}\rangle_{\Gamma}\nonumber\\
&=\big(\phi_{p},\psi_{p}\big)_{\Omega_{p}}+\langle R_{5},\psi_{p}\rangle_{\Gamma}\nonumber.
\end{align}

By incorporating the Robin type conditions into the fluid and poroelastic problems, the coupled system is decomposed into two independent subproblems. 
The key point is that the four-variable reformulation does not modify the original FPSI interface structure; therefore the Robin–Robin coupling can be derived consistently at the continuous level.
In the following, we present a theorem to demonstrate that the solution of the decoupled system is equivalent to that of the original coupled Stokes-Biot model after a specific choice for $R_{i} (i = 1,2,\cdots,5)$.
\begin{theorem}\label{equivalence}
    Let $(\mathbf{v}_{f},p_{f},\mathbf{u}_{p}, p_{p})$ be the weak solution of the original coupled Stokes-Biot system \eqref{2.1}--\eqref{2.16} and let 
    $(\mathbf{v}_{f,r},p_{f, r},\mathbf{v}_{p, r},\mathbf{u}_{p, r},\beta_{p, r},p_{p, r})$
    be the solution of the decoupled system \eqref{3.1} -- \eqref{3.6} with deduced Robin type boundary conditions at the interface. Then 
    $$
    \mathbf{v}_{f,r} = \mathbf{v}_{f},\ p_{f, r} = p_{f},\ \mathbf{u}_{p, r} = \mathbf{u}_{p},
    \ p_{p, r} = p_{p},
    $$
    if and only if  $R_{i} (i = 1,2,\cdots,5)$ satisfy the following compatibility conditions:
    \begin{align*}
        &R_{1} = L_{1}\mathbf{v}_{f}\cdot\mathbf{n}_{f} - p_{p}
        ,\quad &&R_{2}= c_{BJS}\mathcal{M}_{p}(\partial_{t}\mathbf{u}_{p}) , \\
        &R_{3} = L_{2}\partial_{t}\mathbf{u}_{p}\cdot\mathbf{n}_{p}-p_{p}
        ,\quad &&R_{4} = c_{BJS}\mathcal{M}_{f}(\mathbf{v}_{f})
        , \quad R_{5} = L_{3}p_{p}+\mathbf{v}_{f}\cdot\mathbf{n}_{f}+\partial_{t}\mathbf{u}_{p}\cdot\mathbf{n}_{p} .
    \end{align*}
\end{theorem}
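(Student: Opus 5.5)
The argument has two directions, both resting on the fact that the four-variable reformulation \eqref{2.17}--\eqref{2.20} is equivalent to \eqref{2.6}--\eqref{2.7}. That equivalence follows by defining $\mathbf{v}_p=\partial_t\mathbf{u}_p$ and $\beta_p=\alpha p_p-\lambda_p\nabla\cdot\mathbf{u}_p$. Then $\nabla\cdot\bm{\sigma}_p=2\mu_p\nabla\cdot\varepsilon(\mathbf{u}_p)-\nabla\beta_p$, which gives \eqref{2.18}. Equation \eqref{2.19} is the definition of $\beta_p$. Differentiating \eqref{2.19} in time gives $\partial_t\nabla\cdot\mathbf{u}_p=\frac{\alpha}{\lambda_p}\partial_t p_p-\frac1{\lambda_p}\partial_t\beta_p$; substituting this into \eqref{2.7} yields \eqref{2.20}. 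All steps are reversible. The traction also has the same form in both formulations, $\bm{\sigma}_p\mathbf{n}_p=2\mu_p\varepsilon(\mathbf{u}_p)\mathbf{n}_p-\beta_p\mathbf{n}_p$. Hence the Neumann-type boundary terms produced by integrating by parts the reformulated system coincide with those of the original one, and the interface conditions are untouched.

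\emph{Sufficiency.} Take the coupled solution, set $\mathbf{v}_p=\partial_t\mathbf{u}_p$ and $\beta_p=\alpha p_p-\lambda_p\nabla\cdot\mathbf{u}_p$, and check that this sextuple satisfies \eqref{3.1}--\eqref{3.6} with the stated $R_i$.

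For the fluid, multiply \eqref{2.1} by $\mathbf{w}_f$ and integrate by parts to get the boundary term $-\langle\bm{\sigma}_f\mathbf{n}_f,\mathbf{w}_f\rangle_\Gamma$. Split it into normal and tangential parts. The normal part uses \eqref{2.14}, $\bm{\sigma}_f\mathbf{n}_f\cdot\mathbf{n}_f=-p_p$, and is moved to the right-hand side as $\langle L_1\mathbf{v}_f\cdot\mathbf{n}_f-p_p,\mathbf{w}_f\cdot\mathbf{n}_f\rangle$ after adding $L_1\langle\mathbf{v}_f\cdot\mathbf{n}_f,\mathbf{w}_f\cdot\mathbf{n}_f\rangle$ to both sides. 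The tangential part uses \eqref{2.16} and produces $R_2=c_{BJS}\mathcal{M}_p(\partial_t\mathbf{u}_p)$, using $\bm{\tau}_{p,j}=\bm{\tau}_{f,j}$ up to the sign conventions the paper has implicitly fixed.

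For the Biot momentum equation, use \eqref{2.15} and \eqref{2.14} to get $\bm{\sigma}_p\mathbf{n}_p\cdot\mathbf{n}_p=-p_p$, which gives $R_3$. The tangential parts of \eqref{2.15}--\eqref{2.16} give $R_4$.

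For the Darcy equation \eqref{2.20}, the flux term $-\langle\mu_f^{-1}K(\nabla p_p-\rho_f\mathbf{g})\cdot\mathbf{n}_p,\psi_p\rangle_\Gamma$ is rewritten through \eqref{2.13} as $-\langle \mathbf{v}_f\cdot\mathbf{n}_f+\partial_t\mathbf{u}_p\cdot\mathbf{n}_p,\psi_p\rangle$, with the sign fixed by \eqref{2.25}. Adding $L_3\langle p_p,\psi_p\rangle$ then gives $R_5$.

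By uniqueness of the decoupled subproblems with given data $R_i$ (each is a coercive Robin problem), the decoupled solution coincides with this one.

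\emph{Necessity.} Suppose the decoupled solution equals the coupled one. Subtract the weak forms. Choose test functions compactly supported in the interior to recover the strong PDEs, so the remaining identities live only on $\Gamma$. Since the traces of $\mathbf{w}_f\cdot\mathbf{n}_f$, $\mathcal{M}_f(\mathbf{w}_f)$ and $\psi_p$ are arbitrary (dense in $L^2(\Gamma)$ or $H^{-1/2}$), each $R_i$ must equal the interface expression derived above. The identifications rely on \eqref{2.13}--\eqref{2.16} holding for the coupled solution.

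The main obstacle is bookkeeping rather than analysis. The sign conventions must be consistent: $\mathbf{n}_f=-\mathbf{n}_p$, the Robin condition \eqref{2.25} carries a plus sign on the flux, and $\mathcal{M}_f$ versus $\mathcal{M}_p$ relate through the choice of tangents. Making $R_5$ come out exactly as stated is the delicate point. A further technical point is justifying the traction identification for weak solutions, which requires enough regularity for the normal traces to make sense in $H^{-1/2}(\Gamma)$.
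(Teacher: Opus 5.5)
Your argument is correct and follows the route the paper indicates. The paper only defers to the Robin--Robin equivalence proof of the authors' earlier work, observing that $\beta_p$ leaves the interface conditions untouched; your traction identity $\bm{\sigma}_p\mathbf{n}_p=2\mu_p\varepsilon(\mathbf{u}_p)\mathbf{n}_p-\beta_p\mathbf{n}_p$, together with the reversible derivation of \eqref{2.17}--\eqref{2.20}, makes that observation precise, and your sufficiency step (verification plus uniqueness of the data-driven subproblems) and necessity step (density of the interface traces of the test functions) are the standard remaining ones.

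One bookkeeping correction: the stated $R_2$ and $R_4$ come out only with the orientation $\bm{\tau}_{p,j}=-\bm{\tau}_{f,j}$, i.e.\ $\mathcal{M}_p=-\mathcal{M}_f$ on $\Gamma$, not with $\bm{\tau}_{p,j}=\bm{\tau}_{f,j}$. This is the same identity the stability proof uses to merge the BJS terms.
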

\begin{remark}

The proof of Theorem \ref{equivalence} follows a similar argument as presented in our previous work \cite{guo2025fully}. We point out that the primary distinction here is the auxiliary variable $\beta_{p}$ within the Biot subsystem. However, since $\beta_{p}$ does not alter the interface conditions, the fundamental process of the equivalence proof remains unchanged.
\end{remark}

Moreover, the well-posedness analysis of the coupled system under the Robin-type interface conditions is essential for ensuring the reliability of its numerical discretization. To rigorously characterize the existence and uniqueness of the solution, we now present the following theorem.
\begin{theorem}\label{thm3.2}
    The solution to problem \eqref{3.1}-\eqref{3.6} exists uniquely.
\end{theorem}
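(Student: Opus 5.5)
Since the decoupled problem \eqref{3.1}--\eqref{3.6} consists of two independent subproblems once the data $R_1,\dots,R_5$ are fixed, I will treat the fluid part \eqref{3.1}--\eqref{3.2} and the Biot part \eqref{3.3}--\eqref{3.6} separately. In each case the plan is a Galerkin argument. First I prove existence and uniqueness for a semi-discrete (in time) or Galerkin-truncated problem. Then I derive a priori energy estimates independent of the truncation, pass to the limit, and obtain uniqueness from the same energy identity applied to the difference of two solutions. The data are assumed regular, with $R_1,R_3,R_5\in L^2(0,T;H^{-1/2}(\Gamma))$ (or $L^2(\Gamma)$), $R_2,R_4$ similarly, and $\mathbf f_f,\mathbf f_p,\phi_f,\phi_p$ in $L^2$ in time.

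For the fluid subproblem I first lift the divergence datum $\phi_f$. Because $|\Gamma_f^D|>0$ and $\Gamma\cup\Gamma_f^N$ is not empty, I pick $\mathbf v_\phi$ with $\nabla\cdot\mathbf v_\phi=\phi_f$ via the surjectivity of the divergence onto $L^2(\Omega_f)$, and then work in the divergence-free subspace. The bilinear form $2\mu_f(\varepsilon(\cdot),\varepsilon(\cdot))+L_1\langle\cdot\,\mathbf n_f,\cdot\,\mathbf n_f\rangle_\Gamma+c_{BJS}\langle\mathcal M_f\cdot,\mathcal M_f\cdot\rangle_\Gamma$ is coercive on $\mathbf V_f$ by Korn's inequality, and the boundary terms are nonnegative. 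Standard parabolic (Lions) theory on the Gelfand triple $\mathbf V_f^{div}\subset \mathbf L^2\subset (\mathbf V_f^{div})'$ then gives a unique velocity. The pressure $p_f$ is recovered through the inf--sup (de Rham) argument, using the Stokes inf--sup condition on $\mathbf V_f\times M_f$.

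The Biot subproblem needs more care. I eliminate $\mathbf v_p=\partial_t\mathbf u_p$ using \eqref{3.3}, so that \eqref{3.4} becomes second order in $\mathbf u_p$. I take Galerkin bases for $\mathbf V_p$, $M_p$ and $W_p$ and solve the resulting DAE. Equation \eqref{3.5} is algebraic in $\beta_p$: for finite $\lambda_p$ it gives $\beta_p$ explicitly as the $L^2$ projection of $\alpha p_p-\lambda_p\nabla\cdot\mathbf u_p$, so the system reduces to an ODE system in $(\mathbf u_p,p_p)$. That ODE system is solvable because $\big(c_0+\alpha^2/\lambda_p\big)\partial_tp_p-\frac{\alpha}{\lambda_p}\partial_t\beta_p$ reduces to $c_0\partial_tp_p+\alpha\nabla\cdot\partial_t\mathbf u_p$ (projected), giving a nonsingular mass matrix. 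The case $c_0=0$ must be handled by using the structure of the energy below rather than invertibility of the $p$-mass alone. For the energy estimate I test \eqref{3.4} with $\mathbf w_p=\partial_t\mathbf u_p$, \eqref{3.5} differentiated in time with $\varphi_p=\beta_p$, and \eqref{3.6} with $\psi_p=p_p$, and add. The cross terms $-(\beta_p,\nabla\cdot\partial_t\mathbf u_p)$, $(\nabla\cdot\partial_t\mathbf u_p,\beta_p)$ and the $\alpha/\lambda_p$ couplings combine into
\[
\frac{1}{2}\frac{d}{dt}\Big(\rho_p\|\partial_t\mathbf u_p\|^2+2\mu_p\|\varepsilon(\mathbf u_p)\|^2+c_0\|p_p\|^2+\tfrac{1}{\lambda_p}\|\beta_p-\alpha p_p\|^2\Big)+\mu_f^{-1}K_1\|\nabla p_p\|^2+\text{(nonnegative interface terms)},
\]
which is bounded by the data terms. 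Gronwall and the trace inequality then give uniform bounds on $\partial_t\mathbf u_p$, $\mathbf u_p$, and $p_p$ in $L^2(0,T;W_p)$. A $\lambda_p$-uniform bound on $\beta_p$ follows from \eqref{3.4} and the inf--sup condition of $(\nabla\cdot,\cdot)$ on $\mathbf V_p\times M_p$.

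The hardest part will be this Biot step, namely obtaining enough time regularity of $\beta_p$ and $p_p$ to pass to the limit in the $\partial_t\beta_p$ and $\partial_tp_p$ terms, especially when $c_0=0$. I would first differentiate the Galerkin system once in time and repeat the energy estimate, assuming compatible initial data, which bounds $\partial_t p_p$ and $\partial_t\beta_p$. Weak-star compactness then yields a limit solution. Uniqueness comes from applying the energy identity to the difference of two solutions with zero data and initial values, which forces all norms to vanish, with $\beta_p$ controlled through the inf--sup bound.
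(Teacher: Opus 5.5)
\textbf{The gap: you prove a different problem.} You treat $R_1,\dots,R_5$ as prescribed interface data, so \eqref{3.1}--\eqref{3.2} and \eqref{3.3}--\eqref{3.6} decouple and you show well-posedness of each Robin subproblem on its own. Under that literal reading your sketch is reasonable, with one correction: the Biot estimate needs $R_3,R_4\in L^2(0,T;L^2(\Gamma))$, not $H^{-1/2}(\Gamma)$, because the energy controls $\partial_t\mathbf{u}_p$ on $\Gamma$ only through the $L_2$ and $c_{BJS}$ terms. But this is not the statement the paper proves or later uses. In Appendix~\ref{app:proof} the $R_i$ are replaced by the compatibility expressions of Theorem~\ref{equivalence}: $R_1=L_1\mathbf{v}_f\cdot\mathbf{n}_f-p_p$, $R_2=c_{BJS}\mathcal{M}_p(\partial_t\mathbf{u}_p)$, $R_3=L_2\partial_t\mathbf{u}_p\cdot\mathbf{n}_p-p_p$, $R_4=c_{BJS}\mathcal{M}_f(\mathbf{v}_f)$, $R_5=L_3p_p+\mathbf{v}_f\cdot\mathbf{n}_f+\partial_t\mathbf{u}_p\cdot\mathbf{n}_p$. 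The error analysis of Section~5 also relies on the exact solution satisfying exactly these relations. With these $R_i$, each subproblem's interface data contain the other subdomain's unknowns, so \eqref{3.1}--\eqref{3.6} is the coupled Stokes--Biot system. Your decoupled argument then gives neither existence nor uniqueness. Existence would require a fixed point of the map from interface data to solution and back. Uniqueness for fixed data does not compare two coupled solutions, since they generate different $R_i$.

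\textbf{The missing idea: sum the energy identities first.} Add the fluid energy identity (tested with $(\mathbf{v}_f,p_f)$) to the Biot one (your test functions, which match the paper's) before estimating anything. Three cancellations then occur. First, the Robin terms with $L_1,L_2,L_3$ cancel against the matching parts of $R_1,R_3,R_5$. Second, the pressure--flux terms $-\langle p_p,\mathbf{v}_f\cdot\mathbf{n}_f\rangle_\Gamma-\langle p_p,\partial_t\mathbf{u}_p\cdot\mathbf{n}_p\rangle_\Gamma+\langle\mathbf{v}_f\cdot\mathbf{n}_f+\partial_t\mathbf{u}_p\cdot\mathbf{n}_p,p_p\rangle_\Gamma$ vanish identically. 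Third, the BJS cross terms are absorbed by $c_{BJS}\|\mathcal{M}_f(\mathbf{v}_f)\|_{L^2(\Gamma)}^2+c_{BJS}\|\mathcal{M}_p(\partial_t\mathbf{u}_p)\|_{L^2(\Gamma)}^2$. The result is \eqref{A.3}, which contains no interface data and holds even for $c_0=0$. Galerkin approximation of the \emph{combined} system plus compactness then gives existence. The same identity for the difference of two solutions gives \eqref{A.11}, hence uniqueness, with $p_f$ recovered via inf--sup.

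A Biot-only estimate cannot replace this step. After substitution it retains $\langle\mathbf{v}_f\cdot\mathbf{n}_f,p_p\rangle_\Gamma$, which involves the fluid unknown and is controlled (in fact cancelled) only in the summed identity. Your Biot computation, including the recombination into $\frac{1}{2\lambda_p}\frac{d}{dt}\|\beta_p-\alpha p_p\|^2$, carries over once you run the argument on the coupled system. So does your attention to the case $c_0=0$ and to time regularity, which the paper only cites as standard.
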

\begin{proof}
    Appendix \ref{app:proof}.
\end{proof}
\subsection{Fully decoupled  and locking-free numerical scheme}
 Let $\mathcal{T}_{f,h}$ and $\mathcal{T}_{p,h}$ be the 
quasi-uniform triangulation or rectangular partitions of $\Omega_{f}$ and $\Omega_{p}$ with maximum mesh size $h$, and $\bar{\Omega}_{f}=\bigcup_{\mathcal{K}_{f}\in\mathcal{T}_{f,h}}\bar{\mathcal{K}_{f}}$, $\bar{\Omega}_{p}=\bigcup_{\mathcal{K}_{p}\in\mathcal{T}_{f,h}}\bar{\mathcal{K}_{p}}$. We assume that the partitions $\mathcal{T}_{f,h}$ and $\mathcal{T}_{p,h}$ are compatible on $\Gamma$; i.e., they share the same edges (if $d=2$) or faces (if $d=3$) therein. The family of partitions induced on $\Gamma$ will be denoted by $\mathcal{T}_{h}$. The time interval $[0, T]$ is divided as  $N$ equal intervals, denoted by $[t_{n-1}, t_{n}], n=1,2,\cdots,N$,  and $\Delta t=T/N$, then $t_n=n\Delta t$. In this work, we use the backward Euler method to the Stokes-Biot system in time. Moreover, we define the following finite element spaces (cf. \cite{brezzi2012mixed}) as:
\begin{align*}
	\mathbf{V}_{f,h}&=
	\bigl\{\mathbf{w}_{f,h}\in\mathbf{V}_{f},\mathbf{w}_{f,h}\in \mathbf{C}^0(\overline{\Omega}_{f});\,
	\mathbf{w}_{f,h}|_{\mathcal{K}_{f}}\in \mathbf{P}_{2}(\mathcal{K}_{f})~~\forall \mathcal{K}_{f}\in \mathcal{T}_{f,h} \bigr\},\\
	M_{f,h} &=\bigl\{q_{f,h}\in M_{f}, q_{f,h}\in C^0(\overline{\Omega}_{f});\, q_{f,h}|_{\mathcal{K}_{f}}\in P_{1}(\mathcal{K}_{f})
	~~\forall \mathcal{K}_{f}\in \mathcal{T}_{f,h} \bigr\},\\
	\mathbf{V}_{p,h}&=
	\bigl\{\mathbf{w}_{p,h}\in\mathbf{V}_{p},\mathbf{w}_{p,h}\in \mathbf{C}^0(\overline{\Omega}_{p});\,
	\mathbf{w}_{p,h}|_{\mathcal{K}_{p}}\in \mathbf{P}_{2}(\mathcal{K}_{p})~~\forall \mathcal{K}_{p}\in \mathcal{T}_{p,h} \bigr\},\\
	M_{p,h} &=\bigl\{q_{p,h}\in W_{p}, q_{p,h}\in C^0(\overline{\Omega}_{p});\, q_{p,h}|_{\mathcal{K}_{p}}\in P_{1}(\mathcal{K}_{p})
	~~\forall \mathcal{K}_{p}\in \mathcal{T}_{p,h} \bigr\}, \\
 W_{p,h} &=\bigl\{q_{p,h}\in M_{p}, q_{p,h}\in C^0(\overline{\Omega}_{p});\, q_{p,h}|_{\mathcal{K}_{p}}\in P_{2}(\mathcal{K}_{p})
	~~\forall \mathcal{K}_{p}\in \mathcal{T}_{p,h} \bigr\}, 
\end{align*}
where $P_{k}(\mathcal{K_{*}}),~*=f,p$ is the space of polynomials of degree $k$ on $\mathcal{K}_{*}$. From \cite{brezzi2012mixed}, it is easy to check that $(\mathbf{w}_{*,h},q_{*,h})\in(\mathbf{V}_{*,h},M_{*,h})$ satisfies the inf-sup condition.

Next, we present the locking-free and loosely coupled Robin-Robin scheme for the Stokes-Biot model. Within the proposed computational framework, the Stokes equations in the fluid domain $\Omega_{f}$ are solved using the auxiliary variables $R_{1}$ and $R_{2}$ obtained from the previous time step. Specifically, at each time level, the updates of the fluid velocity and pressure rely solely on the values of $R_{1}$ and $R_{2}$ from the preceding step, without requiring information from the current-time-step unknowns in the porous medium. Similarly, in the poroelastic domain $\Omega_{p}$, the Biot equations are solved based on the values of $R_{3}$, $R_{4}$, and $R_{5}$ from the previous step. In this case as well, the updates of solid displacement and pore pressure depend exclusively on these past-step quantities. Since the solution of each system does not involve the current-step unknowns of the other, the Stokes and Biot equations can be solved independently and concurrently within the same time step. This decoupling strategy significantly enhances computational efficiency, providing a scalable approach for large-scale fluid–structure interaction problems.
\begin{algorithm}[htbp]
	\caption{Fully decoupled and locking-free scheme for the Stokes-Biot problem}
    \label{al1}
	\begin{itemize}
		\item[(i)]
		Compute $\mathbf{v}^0_{f,h},~\mathbf{v}_{p,h}^{0},~\mathbf{u}_{p,h}^{0},~p_{p,h}^{0}$ and
		 $R_{i,h}^{0}(i=1, 2\cdots 5)$ as:
		\begin{align*}
			&\mathbf{v}^0_{f,h}=\mathcal{Q}_{h}\mathbf{v}_{f,0},\qquad \mathbf{v}_{p,h}^{0}=\mathcal{Q}_{h}\mathbf{v}_{p}^{0},\qquad \mathbf{u}_{p,h}^{0}=\mathcal{Q}_{h}\mathbf{u}_{p}^{0},\qquad p_{p,h}^{0}=\mathcal{R}_{h}p_{p}^{0},\\
			&R_{1,h}^{0},= L_{1}\mathbf{v}_{f,h}^{0}\cdot\mathbf{n}_{f}-p_{p,h}^{0},\quad R_{2,h}^{0}=c_{BJS}\mathcal{M}_{p}(\mathbf{v}_{p,h}^{0}),\\
           &R_{3,h}^{0}= L_{2}\mathbf{v}_{p,h}^{0}\cdot\mathbf{n}_{p}-p_{p,h}^{0},\quad R_{4,h}^{0}=c_{BJS}\mathcal{M}_{f}(\mathbf{v}_{f,h}^{0}),\\
           &R_{5,h}^{0}=L_{3}p_{p,h}^{0}+ \mathbf{v}_{f,h}^{0}\cdot\mathbf{n}_{f}+\mathbf{v}_{p,h}^{0}\cdot\mathbf{n}_{p}.
		\end{align*}

	     \item[(ii)] \textbf{Fluid subproblem}: For any $(\mathbf{w}_{f,h}, q_{f,h})\in \mathbf{V}_{f,h} \times M_{f,h}$ and $n\geq0$, solve for $(\mathbf{v}^{n+1}_{f,h}, p_{f,h}^{n+1})\in \mathbf{V}_{f,h} \times M_{f,h}$ such that
		\begin{align}
		&\rho_{f}\big(d_{t}\mathbf{v}_{f,h}^{n+1},\mathbf{w}_{f,h}\big)_{\Omega_{f}}+2\mu_{f}\big(\varepsilon(\mathbf{v}_{f,h}^{n+1}),\varepsilon(\mathbf{w}_{f,h})\big)_{\Omega_{f}}-\big(p_{f,h}^{n+1},\nabla\cdot\mathbf{w}_{f,h}\big)_{\Omega_{f}}\label{4.2}\\
          &\quad+L_{1}\langle\mathbf{v}_{f,h}^{n+1}\cdot\mathbf{n}_{f},\mathbf{w}_{f,h}\cdot\mathbf{n}_{f}\rangle_{\Gamma} 
          + c_{BJS}\langle\mathcal{M}_{f}(\mathbf{v}_{f,h}^{n+1}),\mathcal{M}_{f}(\mathbf{w}_{f,h})\rangle_{\Gamma}\nonumber\\
          &=\big(\mathbf{f}_{f}^{n+1},\mathbf{w}_{f,h}\big)_{\Omega_{f}}+\langle R_{1,h}^{n},\mathbf{w}_{f,h}\cdot\mathbf{n}_{f}\rangle_{\Gamma}-\langle R_{2,h}^{n},\mathcal{M}_{f}(\mathbf{w}_{f,h})\rangle_{\Gamma},\nonumber\\
          &\big(\nabla\cdot\mathbf{v}_{f,h}^{n+1},q_{f,h}\big)_{\Omega_{f}}=\big(\phi_{f}^{n+1},q_{f,h}\big)_{\Omega_{f}}.\label{4.3}
		\end{align}
  
		\item[(iii)] \textbf{Poroelastic subproblem}: For any $(\mathbf{z}_{p,h},\mathbf{w}_{p,h},\varphi_{p,h},\psi_{p,h})\in\mathbf{V}_{p,h}\times\mathbf{V}_{p,h}\times M_{p,h}\times W_{p,h}$, solve for $(\mathbf{v}_{p,h}^{n+1},\mathbf{u}_{p,h}^{n+1},\beta_{p,h}^{n+1},p_{p,h}^{n+1})\in\mathbf{V}_{p,h}\times\mathbf{V}_{p,h}\times M_{p,h}\times W_{p,h}$ such that
		\begin{align}
		&\big(\mathbf{v}_{p,h}^{n+1},\mathbf{z}_{p,h}\big)_{\Omega_{p}}-\big(d_{t}\mathbf{u}_{p,h}^{n+1},\mathbf{z}_{p,h}\big)_{\Omega_{p}}=0,\label{4.4}\\		&\rho_{p}\big(d_{t}\mathbf{v}_{p,h}^{n+1},\mathbf{w}_{p,h}\big)_{\Omega_{p}}+2\mu_{p}\big(\varepsilon(\mathbf{u}_{p,h}^{n+1}),\varepsilon(\mathbf{w}_{p,h})\big)_{\Omega_{p}}-\big(\beta_{p,h}^{n+1},\nabla\cdot\mathbf{w}_{p,h}\big)_{\Omega_{p}}\label{4.5}\\
          &\quad+L_{2}\langle d_{t}\mathbf{u}_{p,h}^{n+1}\cdot\mathbf{n}_{p},\mathbf{w}_{p,h}\cdot\mathbf{n}_{p}\rangle_{\Gamma}+c_{BJS}\langle\mathcal{M}_{p}(d_{t}\mathbf{u}_{p,h}^{n+1}),\mathcal{M}_{p}(\mathbf{w}_{p,h})\rangle_{\Gamma}\nonumber\\
          &=\big(\mathbf{f}_{p}^{n+1},\mathbf{w}_{p,h}\big)_{\Omega_{p}}+\langle R_{3,h}^{n},\mathbf{w}_{p,h}\cdot\mathbf{n}_{p}\rangle_{\Gamma}-\langle R_{4,h}^{n},\mathcal{M}_{p}(\mathbf{w}_{p,h})\rangle_{\Gamma},\nonumber\\
          &\frac{1}{\lambda_{p}}\big(\beta_{p,h}^{n+1},\varphi_{p,h}\big)_{\Omega_{p}}+\big(\nabla\cdot\mathbf{u}_{p,h}^{n+1},\varphi_{p,h}\big)_{\Omega_{p}}=\frac{\alpha}{\lambda_{p}}\big(p_{p,h}^{n+1},\varphi_{p,h}\big)_{\Omega_{p}},\label{4.6}\\
          &\big(c_{0}+\frac{\alpha^{2}}{\lambda_{p}}\big)\big(d_{t}p_{p,h}^{n+1},\psi_{p,h}\big)_{\Omega_{p}}-\frac{\alpha}{\lambda_{p}}\big(d_{t}\beta_{p,h}^{n+1},\psi_{p,h}\big)_{\Omega_{p}}\label{4.7}\\
          &\quad+\big(\mu_{f}^{-1}K(\nabla p_{p,h}^{n+1}-\rho_{f}\mathbf{g}),\nabla\psi_{p,h}\big)_{\Omega_{p}}+L_{3}\langle p_{p,h}^{n+1},\psi_{p,h}\rangle_{\Gamma}\nonumber\\
          &=\big(\phi_{p}^{n+1},\psi_{p,h}\big)_{\Omega_{p}}+\langle R_{5,h}^{n},\psi_{p,h}\rangle_{\Gamma}.\nonumber
		\end{align}
		\item[(iv)] Update $R_{i,h}^{n+1}$ by
		\begin{align}
             &R_{1,h}^{n+1},= L_{1}\mathbf{v}_{f,h}^{n+1}\cdot\mathbf{n}_{f}-p_{p,h}^{n+1},\label{4.8}\\
             &R_{2,h}^{n+1}=c_{BJS}\mathcal{M}_{p}(d_{t}\mathbf{u}_{p,h}^{n+1}),\label{4.9}\\
           &R_{3,h}^{n+1}= L_{2}d_{t}\mathbf{u}_{p,h}^{n+1}\cdot\mathbf{n}_{p}-p_{p,h}^{n+1},\label{4.10}\\
           &R_{4,h}^{n+1}=c_{BJS}\mathcal{M}_{f}(\mathbf{v}_{f,h}^{n+1}),\label{4.11}\\
           &R_{5,h}^{n+1}=L_{3}p_{p,h}^{n+1}+ \mathbf{v}_{f,h}^{n+1}\cdot\mathbf{n}_{f}+d_{t}\mathbf{u}_{p,h}^{n+1}\cdot\mathbf{n}_{p}.\label{4.12}
           \end{align}
	\end{itemize}
\end{algorithm}
\begin{remark}
For the discretization of the Biot system, we primarily employ the $\mathbf{P}_2$--$P_1$ finite element pairs to solve equations \eqref{4.5}-\eqref{4.6}, as this choice satisfies the inf--sup stability condition in a natural and robust manner. The proposed formulation is not restricted to this particular pair and can be readily extended to other stable combinations, such as the $\mathbf{P}_3$--$P_2$ elements or the MINI element. These alternatives provide additional flexibility in balancing accuracy and computational cost. It is worth noting that, under certain conditions, equations \eqref{4.5}-\eqref{4.6} can also be approximated using $\mathbf{P}_{1}-P_{1}$ elements without introducing additional stabilization; further details on this approach can be found in \cite{di2025optimal}.

Moreover, by lagging the pressure variable $p_p$ in time, namely by replacing the superscript $n+1$ with $n$, the fully coupled Biot system can be decomposed into two subproblems: a generalized Stokes problem for the displacement and total pressure, and a diffusion problem for the pore pressure. This splitting strategy preserves the essential coupling mechanisms while significantly reducing computational complexity. In particular, it enables the two subproblems to be solved independently, thereby facilitating parallel computations in the poroelastic domain and improving overall computational efficiency.
\end{remark}
\section{Stability analysis}
To establish the stability of the proposed scheme, we first recall the useful identity:
 \begin{align}
 2a(a-b)=a^{2}-b^{2}+(a-b)^{2}.\label{5.1}
\end{align}
For the sake of brevity, we assume the absence of external forcing terms in the system, i.e.,
$\mathbf{f}_{f}=\mathbf{f}_{p}=\mathbf{g}=\mathbf{0}$ and $\phi_{f}=\phi_{p}=0$. We remark that the subsequent results can be extended to the non-homogeneous case in a straightforward manner by employing standard techniques, such as the Cauchy-Schwarz and Young inequalities.
For conciseness, these details are omitted here. Moreover, we introduce $C,~\widehat{C}$ and $\check{C}$ as the positive constants related to the model parameters and common inequalities, such as Poincar\'e inequality, Korn inequality. 

By denoting the accumulated energy as:
\begin{align*}
\mathcal{E}_{h}^{n+1}=&\frac{\rho_{f}}{2}\|\mathbf{v}_{f,h}^{n+1}\|_{L^{2}(\Omega_{f})}^{2}+\frac{\rho_{p}}{2}\|\mathbf{v}_{p,h}^{n+1}\|_{L^{2}(\Omega_{p})}^{2}+\mu_{p}\|\varepsilon(\mathbf{u}_{p,h}^{n+1})\|_{L^{2}(\Omega_{p})}^{2}+\frac{c_{0}}{2}\|p_{p,h}^{n+1}\|_{L^{2}(\Omega_{p})}^{2}\\
&+\frac{1}{2\lambda_{p}}\|\alpha p_{p,h}^{n+1}-\beta_{p,h}^{n+1}\|_{L^{2}(\Omega_{p})}^{2},\\
\mathcal{N}_{h}^{n+1}=&\frac{L_{1}}{2}\|\mathbf{v}_{f,h}^{n+1}\cdot\mathbf{n}_{f}\|_{L^{2}(\Gamma)}^{2}+\frac{L_{2}}{2}\|d_{t}\mathbf{u}_{p,h}^{n+1}\cdot\mathbf{n}_{p}\|_{L^{2}(\Gamma)}^{2}+\frac{L_{3}}{2}\|p_{p,h}^{n+1}\|_{L^{2}(\Gamma)}^{2},\\
\mathcal{J}_{h}^{n+1}
=&2\mu_{f}\|\varepsilon(\mathbf{v}_{f,h}^{n+1})\|_{L^{2}(\Omega_{f})}^{2}
+\mu_{f}^{-1}\|K^{\frac{1}{2}}\nabla p_{p,h}^{n+1}\|_{L^{2}(\Omega_{p})}^{2}+\frac{\rho_{f}\Delta t}{2}\|d_{t}\mathbf{v}_{f,h}^{n+1}\|_{L^{2}(\Omega_{f})}^{2}\\
&+\frac{\rho_{p}\Delta t}{2}\|d_{t}\mathbf{v}_{p,h}^{n+1}\|_{L^{2}(\Omega_{p})}^{2}+\mu_{p}\Delta t\|d_{t}\varepsilon(\mathbf{u}_{p,h}^{n+1})\|_{L^{2}(\Omega_{p})}^{2}+\frac{c_{0}\Delta t}{2}\|d_{t}p_{p,h}^{n+1}\|_{L^{2}(\Omega_{p})}^{2}\\
&+\frac{\Delta t}{2\lambda_{p}}\|d_{t}(\alpha p_{p,h}^{n+1}-\beta_{p,h}^{n+1})\|_{L^{2}(\Omega_{p})}^{2},\\
\widetilde{\mathcal{J}}_{h}^{n+1}=&\mathcal{J}_{h}^{n+1}-\mu_{f}\|\varepsilon(\mathbf{v}_{f,h}^{n+1})\|_{L^{2}(\Omega_{f})}^{2}-\frac{1}{2\mu_{f}}\|K^{\frac{1}{2}}\nabla p_{p,h}^{n+1}\|_{L^{2}(\Omega_{p})}^{2},\\
\mathcal{M}_{h}^{n+1}=&\frac{1}{2}\sum_{j=1}^{d-1}c_{BJS}\big(\|\mathbf{v}_{f,h}^{n+1}\cdot\bm{\tau}_{f,j}\|_{L^{2}(\Gamma)}^{2}+\|d_{t}\mathbf{u}_{p,h}^{n+1}\cdot\bm{\tau}_{p,j}\|_{L^{2}(\Gamma)}^{2}\big),\\
\mathcal{L}_{h}^{n+1}=&\frac{1}{2}\sum_{j=1}^{d-1}c_{BJS}\big(\|(\mathbf{v}_{f,h}^{n+1}-d_{t}\mathbf{u}_{p,h}^{n})\cdot\bm{\tau}_{f,j}\|_{L^{2}(\Gamma)}^{2}+\|(d_{t}\mathbf{u}_{p,h}^{n+1}-\mathbf{v}_{f,h}^{n})\cdot\bm{\tau}_{p,j}\|_{L^{2}(\Gamma)}^{2}\big),
\end{align*}
we present the energy estimate for Algorithm \ref{al1} in the following theorem:
\begin{theorem}\label{lem5.1}
 Let $(\mathbf{v}_{f,h}^{n+1},p_{f,h}^{n+1},\mathbf{v}_{p,h}^{n+1},\mathbf{u}_{p,h}^{n+1},\beta_{p,h}^{n+1},p_{p,h}^{n+1})$ be the numerical solution generated by Algorithm \ref{al1} at time level $t_{n+1}$. Then the following energy inequalities hold:
    \begin{align}
    &\mathcal{E}_{h}^{l+1}+\Delta t(\mathcal{N}_{h}^{l+1}+\mathcal{M}_{h}^{l+1})+\Delta t\sum_{n=0}^{l}(\widetilde{\mathcal{J}}_{h}^{n+1}+\mathcal{L}_{h}^{n+1})\label{5.2}\\
    &\leq\Big[\mathcal{E}_{h}^{0}+\Delta t(\mathcal{N}_{h}^{0}+\mathcal{M}_{h}^{0})\Big]e^{\widetilde{C}T},\nonumber\\
    &R_{1,h}^{l+1}\leq\big(1+\frac{1}{L_{3}}\big)\Big[\frac{2}{\sqrt{\Delta t}}\mathcal{E}_{h}^{0}+2(\mathcal{N}_{h}^{0}+\mathcal{M}_{h}^{0})\Big]e^{\widetilde{C}T},\label{5.3}
    \end{align}
    \begin{align}
    &R_{2,h}^{l+1}\leq\Big[\frac{2}{\sqrt{\Delta t}}\mathcal{E}_{h}^{0}+2(\mathcal{N}_{h}^{0}+\mathcal{M}_{h}^{0})\Big]e^{\widetilde{C}T},\label{5.4}\\
    &R_{3,h}^{l+1}\leq\big(1+\frac{1}{L_{3}}\big)\Big[\frac{2}{\sqrt{\Delta t}}\mathcal{E}_{h}^{0}+2(\mathcal{N}_{h}^{0}+\mathcal{M}_{h}^{0})\Big]e^{\widetilde{C}T},\label{5.5}\\
    &R_{4,h}^{l+1}\leq\Big[\frac{2}{\sqrt{\Delta t}}\mathcal{E}_{h}^{0}+2(\mathcal{N}_{h}^{0}+\mathcal{M}_{h}^{0})\Big]e^{\widetilde{C}T},\label{5.6}\\
    &R_{5,h}^{l+1}\leq\big(1+\frac{1}{L_{1}}+\frac{1}{L_{2}}\big)\Big[\frac{2}{\sqrt{\Delta t}}\mathcal{E}_{h}^{0}+2(\mathcal{N}_{h}^{0}+\mathcal{M}_{h}^{0})\Big]e^{\widetilde{C}T},\label{5.7}
    \end{align}
    where the positive constant $\widetilde{C}$ depends on the Robin parameters $L_{1}, L_{2}, L_{3}$.
\end{theorem}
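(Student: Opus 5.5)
We test each subproblem with its own discrete solution and sum the resulting identities; the lagged Robin data are then controlled by a telescoping of interface terms plus a discrete Gronwall argument. Throughout we take $\mathbf{f}_f=\mathbf{f}_p=\mathbf{g}=\mathbf{0}$ and $\phi_f=\phi_p=0$.

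\textbf{Step 1: testing the subproblems.} In \eqref{4.2}--\eqref{4.3} we take $\mathbf{w}_{f,h}=\mathbf{v}_{f,h}^{n+1}$ and $q_{f,h}=p_{f,h}^{n+1}$. The pressure terms then cancel, and \eqref{5.1} turns $\rho_f(d_t\mathbf{v}_{f,h}^{n+1},\mathbf{v}_{f,h}^{n+1})$ into the energy difference plus the $\frac{\rho_f\Delta t}{2}\|d_t\mathbf{v}_{f,h}^{n+1}\|^2$ dissipation.

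In the Biot block we use the following test functions:
\begin{itemize}
\item $\mathbf{w}_{p,h}=\mathbf{v}_{p,h}^{n+1}$ in \eqref{4.5}, together with \eqref{4.4} used with $\mathbf{z}_{p,h}$ chosen so that $(\varepsilon(\mathbf{u}_{p,h}^{n+1}),\varepsilon(\mathbf{v}_{p,h}^{n+1}))$ can be replaced by $(\varepsilon(\mathbf{u}_{p,h}^{n+1}),\varepsilon(d_t\mathbf{u}_{p,h}^{n+1}))$;
\item $\varphi_{p,h}=d_t\beta_{p,h}^{n+1}$ and $\varphi_{p,h}=-\alpha d_t p_{p,h}^{n+1}$, after taking the difference of \eqref{4.6} in time;
\item $\psi_{p,h}=p_{p,h}^{n+1}$ in \eqref{4.7}.
\end{itemize}
Since $\mathbf{v}_{p,h}^{n+1}=d_t\mathbf{u}_{p,h}^{n+1}$ in $\mathbf{V}_{p,h}$ by \eqref{4.4}, we may equivalently test \eqref{4.5} with $d_t\mathbf{u}_{p,h}^{n+1}$.

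The coupling term $-(\beta_{p,h}^{n+1},\nabla\cdot d_t\mathbf{u}_{p,h}^{n+1})$ is rewritten via the time-differenced \eqref{4.6} as $\frac{1}{\lambda_p}(\beta_{p,h}^{n+1},d_t(\beta_{p,h}^{n+1}-\alpha p_{p,h}^{n+1}))$. The time-derivative part of \eqref{4.7} with $\psi_{p,h}=p_{p,h}^{n+1}$ contributes $c_0(d_tp,p)+\frac{\alpha}{\lambda_p}(d_t(\alpha p-\beta),p)$. Adding these two gives $\frac{1}{\lambda_p}(d_t(\alpha p-\beta),\alpha p-\beta)$, and \eqref{5.1} then yields exactly the $\frac{1}{2\lambda_p}$ terms in $\mathcal{E}_h$ and $\mathcal{J}_h$. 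This is the locking-free structure, and no $\lambda_p$-dependent constants appear.

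\textbf{Step 2: interface terms.} After substituting \eqref{4.8}--\eqref{4.12} at level $n$, the normal parts combine as follows:
\begin{itemize}
\item $L_1\|\mathbf{v}_f^{n+1}\cdot\mathbf{n}_f\|^2-\langle L_1\mathbf{v}_f^n\cdot\mathbf{n}_f-p_p^n,\mathbf{v}_f^{n+1}\cdot\mathbf{n}_f\rangle$;
\item the analogous expression with $L_2$ and $d_t\mathbf{u}_p$;
\item $L_3\|p_p^{n+1}\|^2-\langle L_3p_p^n+\mathbf{v}_f^n\cdot\mathbf{n}_f+d_t\mathbf{u}_p^n\cdot\mathbf{n}_p,p_p^{n+1}\rangle$.
\end{itemize}
The $L_i$ terms are handled by \eqref{5.1}/Young, giving $\frac{L_i}{2}(\|a^{n+1}\|^2-\|a^n\|^2)$ plus nonnegative remainders; this is the telescoping $\mathcal{N}_h$. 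The cross pressure–flux terms $\langle p^n,\mathbf{v}^{n+1}\cdot\mathbf{n}\rangle-\langle\mathbf{v}^n\cdot\mathbf{n},p^{n+1}\rangle$ do not cancel exactly because of the time lag. We rewrite them as differences between levels $n$ and $n+1$ and bound them with the trace inequality $\|q\|_{L^2(\Gamma)}^2\le C\|q\|\|\nabla q\|$, Korn, and Young. Part of $2\mu_f\|\varepsilon(\mathbf{v}_f)\|^2$ and $\mu_f^{-1}\|K^{1/2}\nabla p\|^2$ is absorbed, which is why $\widetilde{\mathcal{J}}_h$ keeps only half of these. The remaining terms are bounded by $\widetilde C\,\Delta t(\mathcal{E}_h^{n}+\mathcal{E}_h^{n+1}+\mathcal{N}_h^n)$.

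The tangential BJS terms $c_{BJS}\|\mathcal{M}_f(\mathbf{v}_f^{n+1})\|^2-c_{BJS}\langle\mathcal{M}_p(d_t\mathbf{u}_p^n),\mathcal{M}_f(\mathbf{v}_f^{n+1})\rangle$, together with the symmetric Biot counterpart, are treated by polarization $2ab=a^2+b^2-(a-b)^2$. This produces $\mathcal{M}_h^{n+1}-\mathcal{M}_h^n+\mathcal{L}_h^{n+1}$ exactly.

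\textbf{Step 3: summation and Gronwall.} Multiplying by $\Delta t$ and summing over $n=0,\dots,l$ gives
\[
\mathcal{E}_h^{l+1}+\Delta t(\mathcal{N}_h^{l+1}+\mathcal{M}_h^{l+1})+\Delta t\sum_{n=0}^{l}(\widetilde{\mathcal{J}}_h^{n+1}+\mathcal{L}_h^{n+1})\le \mathcal{E}_h^0+\Delta t(\mathcal{N}_h^0+\mathcal{M}_h^0)+\widetilde C\Delta t\sum_{n=0}^{l}(\mathcal{E}_h^{n}+\Delta t\,\mathcal{N}_h^{n}).
\]
The discrete Gronwall lemma then yields \eqref{5.2}. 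The bounds \eqref{5.3}--\eqref{5.7} follow by taking $L^2(\Gamma)$ norms of \eqref{4.8}--\eqref{4.12} and using the triangle inequality. Each interface trace of $\mathbf{v}_f$, $d_t\mathbf{u}_p$, or $p_p$ is bounded through $\mathcal{N}_h$ or $\mathcal{M}_h$ divided by the corresponding $L_i$, which explains the factors $1+1/L_i$. The factor $\Delta t$ in front of $\mathcal{N}_h^{l+1}$ in \eqref{5.2} is what produces the $\Delta t^{-1/2}$ in these bounds.

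\textbf{Main obstacle.} The main obstacle is the non-canceling lagged pressure–flux cross terms in Step 2. I would first try writing $p^n=p^{n+1}-\Delta t\,d_tp^{n+1}$ and absorbing the increments into the $\Delta t$-weighted dissipation in $\mathcal{J}_h$ and into the trace-controlled gradient terms. If that fails, I would fall back on bounding them directly by Young's inequality with weights $1/L_i$, so that $\widetilde C$ depends on $L_1,L_2,L_3$.
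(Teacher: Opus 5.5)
Your architecture is the same as the paper's. You test the Stokes block with $(\mathbf{v}_{f,h}^{n+1},p_{f,h}^{n+1})$. In the Biot block you use $d_t\mathbf{u}_{p,h}^{n+1}$ in \eqref{4.5} (via $\mathbf{v}_{p,h}^{n+1}=d_t\mathbf{u}_{p,h}^{n+1}$ from \eqref{4.4}), $\beta_{p,h}^{n+1}$ in the differenced \eqref{4.6}, and $p_{p,h}^{n+1}$ in \eqref{4.7}. You then telescope the Robin terms with \eqref{5.1}, polarize the BJS terms, apply Young to the lagged cross terms, and close with Gr\"onwall. One slip: $\varphi_{p,h}=-\alpha d_tp_{p,h}^{n+1}$ is not admissible, since $p_{p,h}$ lies in the $P_2$ space $W_{p,h}$ while $\varphi_{p,h}\in M_{p,h}$ is $P_1$. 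The identity you actually derive uses $\varphi_{p,h}=\beta_{p,h}^{n+1}$, as the paper does.

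The gap is at the step you yourself flag as the main obstacle, and neither of your two remedies closes it.

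\begin{itemize}
\item The lagged Biot self-coupling through $R_{3,h}^n$ and $R_{5,h}^n$ leaves $\langle d_t\mathbf{u}_{p,h}^n\cdot\mathbf{n}_p,p_{p,h}^{n+1}-p_{p,h}^n\rangle_\Gamma$. After Young against $\frac{L_3}{4}\|p_{p,h}^{n+1}-p_{p,h}^n\|_{L^2(\Gamma)}^2$, you must control $\frac{1}{L_3}\|d_t\mathbf{u}_{p,h}^n\cdot\mathbf{n}_p\|_{L^2(\Gamma)}^2$.
\item A trace bound for this term needs $\|\varepsilon(d_t\mathbf{u}_{p,h}^n)\|$. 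The only such quantity in the energy is the $\Delta t$-weighted term $\mu_p\Delta t\|d_t\varepsilon(\mathbf{u}_{p,h}^{n})\|^2$ in $\mathcal{J}_h^n$. It is not among the terms you propose to absorb into, namely $2\mu_f\|\varepsilon(\mathbf{v}_{f,h})\|^2$ and $\mu_f^{-1}\|K^{1/2}\nabla p_{p,h}\|^2$.
\item If you use $\|\mathbf{w}\|_{L^2(\Gamma)}^2\le C\|\mathbf{w}\|\,\|\varepsilon(\mathbf{w})\|$ and absorb into that $\Delta t$-weighted term, you are left with $\frac{C}{\mu_p\Delta t\,L_3^2}\|\mathbf{v}_{p,h}^n\|^2$. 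So the remainder bound your Step 3 needs, $\widetilde C(\mathcal{E}_h^n+\mathcal{E}_h^{n+1}+\Delta t\,\mathcal{N}_h^n)$ with $\widetilde C$ depending only on $L_1,L_2,L_3$, is not available. The Gr\"onwall constant degenerates as $\Delta t\to0$.
\item Your fallback through $\mathcal{N}_h$ fails because $\mathcal{N}_h$ appears on the left only with the prefactor $\Delta t$. The bound $\frac{1}{L_3}\|d_t\mathbf{u}_{p,h}^n\cdot\mathbf{n}_p\|^2_{L^2(\Gamma)}\le\frac{2}{L_2L_3}\mathcal{N}_h^n$ therefore gives a per-step amplification factor $1+2/(L_2L_3)$ rather than $1+\widetilde C\Delta t$.
\item Your first idea, $p^n=p^{n+1}-\Delta t\,d_tp^{n+1}$, only moves the same trace of $d_t\mathbf{u}_{p,h}$ to level $n+1$.
\end{itemize}

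The paper gets past this step only with an extra assumption. It bounds the term by $\frac{\widehat C}{L_3}\|d_t\varepsilon(\mathbf{u}_{p,h}^n)\|^2$ and chooses $L_3$ so that $\widehat C/L_3\le\mu_p\Delta t$, which lets it absorb the term into the $\Delta t$-weighted dissipation. A condition of this kind, tying $L_3$ to $\Delta t$, has to be stated and used at this point. As written, your argument has no mechanism for this term.
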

\begin{proof}
We first apply the discrete temporal difference operator $d_{t}$ to both sides of the equation \eqref{4.6}. Then, we test the system \eqref{4.2}--\eqref{4.7} by choosing test functions as $(\mathbf{w}_{f,h},q_{f,h})=(\mathbf{v}_{f,h}^{n+1},p_{f,h}^{n+1})$ for the Stokes subproblem, and $(\mathbf{z}_{p,h},\mathbf{w}_{p,h},\varphi_{p,h},\psi_{p,h})=(\rho_{p}d_{t}\mathbf{v}_{p,h}^{n+1},d_{t}\mathbf{u}_{p,h}^{n+1},\beta_{p,h}^{n+1},p_{p,h}^{n+1})$ for the Biot subproblem. Summing the resulting identities, we obtain:
\begin{align}
    &\frac{\rho_{f}\Delta t}{2}\|d_{t}\mathbf{v}_{f,h}^{n+1}\|_{L^{2}(\Omega_{f})}^{2}
    +\frac{\rho_{f}}{2}d_{t}\|\mathbf{v}_{f,h}^{n+1}\|_{L^{2}(\Omega_{f})}^{2}
    +2\mu_{f}\|\varepsilon(\mathbf{v}_{f,h}^{n+1})\|_{L^{2}(\Omega_{f})}^{2}
\label{5.8}\\
    &\quad+L_{1}\langle\mathbf{v}_{f,h}^{n+1}\cdot\mathbf{n}_{f},\mathbf{v}_{f,h}^{n+1}\cdot\mathbf{n}_{f}\rangle_{\Gamma}+c_{BJS}\langle\mathcal{M}_{f}(\mathbf{v}_{f,h}^{n+1}),\mathcal{M}_{f}(\mathbf{v}_{f,h}^{n+1})\rangle_{\Gamma}\nonumber\\
    &=\langle R_{1,h}^{n},\mathbf{v}_{f,h}^{n+1}\cdot\mathbf{n}_{f}\rangle_{\Gamma}-\langle R_{2,h}^{n},\mathcal{M}_{f}(\mathbf{v}_{f,h}^{n+1})\rangle_{\Gamma},\nonumber\\
    &\frac{\rho_{p}\Delta t}{2}\|d_{t}\mathbf{v}_{p,h}^{n+1}\|_{L^{2}(\Omega_{p})}^{2}
    +\frac{\rho_{p}}{2}d_{t}\|\mathbf{v}_{p,h}^{n+1}\|_{L^{2}(\Omega_{p})}^{2}
    +\mu_{p}\Delta t\|\varepsilon(d_{t}\mathbf{u}_{p,h}^{n+1})\|_{L^{2}(\Omega_{p})}^{2}
\label{5.9}\\
    &\quad+\mu_{p}d_{t}\|\varepsilon(\mathbf{u}_{p,h}^{n+1})\|_{L^{2}(\Omega_{p})}^{2}+\frac{c_{0}}{2}d_{t}\|p_{p,h}^{n+1}\|_{L^{2}(\Omega_{p})}^{2}
    +\frac{c_{0}\Delta t}{2}\|d_{t}p_{p,h}^{n+1}\|_{L^{2}(\Omega_{p})}^{2}
    \nonumber\\
    &\quad+\frac{1}{2\lambda_{p}}d_{t}\|\alpha p_{p,h}^{n+1}
    -\beta_{p,h}^{n+1}\|_{L^{2}(\Omega_{p})}^{2}+\frac{\Delta t}{2\lambda_{p}}\|d_{t}(\alpha p_{p,h}^{n+1}
    -\beta_{p,h}^{n+1})\|_{L^{2}(\Omega_{p})}^{2}\nonumber\\
    &+\mu_{f}^{-1}\|K^{\frac{1}{2}}\nabla p_{p,h}^{n+1})\|_{L^{2}(\Omega_{p})}^{2}+L_{2}\langle d_{t}\mathbf{u}_{p,h}^{n+1}\cdot\mathbf{n}_{p},d_{t}\mathbf{u}_{p,h}^{n+1}\cdot\mathbf{n}_{p}\rangle_{\Gamma}\nonumber\\
    &\quad +c_{BJS}\langle\mathcal{M}_{p}(d_{t}\mathbf{u}_{p,h}^{n+1}),\mathcal{M}_{p}(d_{t}\mathbf{u}_{p,h}^{n+1})\rangle_{\Gamma}+L_{3}\langle p_{p,h}^{n+1},p_{p,h}^{n+1}\rangle_{\Gamma}\nonumber\\
    &=\langle R_{3,h}^{n},d_{t}\mathbf{u}_{p,h}^{n+1}\cdot\mathbf{n}_{p}\rangle_{\Gamma}-\langle R_{4,h}^{n},\mathcal{M}_{p}(d_{t}\mathbf{u}_{p,h}^{n+1})\rangle_{\Gamma}+\langle R_{5,h}^{n},p_{p,h}^{n+1}\rangle_{\Gamma}.\nonumber
    \end{align}
Adding \eqref{5.8} to \eqref{5.9}, then utilizing the relations in \eqref{4.8}-\eqref{4.12}, we arrive at
\begin{align}
&d_{t}\mathcal{E}_{h}^{n+1}+\mathcal{J}_{h}^{n+1}+L_{1}\langle(\mathbf{v}_{f,h}^{n+1}-\mathbf{v}_{f,h}^{n})\cdot\mathbf{n}_{f},\mathbf{v}_{f,h}^{n+1}\cdot\mathbf{n}_{f}\rangle_{\Gamma}
    \label{5.11}\\
    &\quad+c_{BJS}\langle\mathcal{M}_{f}(\mathbf{v}_{f,h}^{n+1}-d_{t}\mathbf{u}_{p,h}^{n}),\mathcal{M}_{f}(\mathbf{v}_{f,h}^{n+1})\rangle_{\Gamma}\nonumber\\
    &\quad+L_{2}\langle (d_{t}\mathbf{u}_{p,h}^{n+1}-d_{t}\mathbf{u}_{p,h}^{n})\cdot\mathbf{n}_{p},d_{t}\mathbf{u}_{p,h}^{n+1}\cdot\mathbf{n}_{p}\rangle_{\Gamma}
    \nonumber\\
    &\quad+c_{BJS}\langle\mathcal{M}_{p}(d_{t}\mathbf{u}_{p,h}^{n+1}-\mathbf{v}_{f,h}^{n}),\mathcal{M}_{p}(d_{t}\mathbf{u}_{p,h}^{n+1})\rangle_{\Gamma}+L_{3}\langle p_{p,h}^{n+1}-p_{p,h}^{n},p_{p,h}^{n+1}\rangle_{\Gamma}\nonumber\\
    &=-\langle p_{p,h}^{n},\mathbf{v}_{f,h}^{n+1}\cdot\mathbf{n}_{f}\rangle_{\Gamma}-\langle p_{p,h}^{n},d_{t}\mathbf{u}_{p,h}^{n+1}\cdot\mathbf{n}_{p}\rangle_{\Gamma}+ \langle\mathbf{v}_{f,h}^{n}\cdot\mathbf{n}_{f},p_{p,h}^{n+1}\rangle_{\Gamma}\nonumber\\
    &\quad+\langle d_{t}\mathbf{u}_{p,h}^{n}\cdot\mathbf{n}_{p},p_{p,h}^{n+1}\rangle_{\Gamma}.\nonumber
    \end{align}
    Applying the identity \eqref{5.1} to the interface terms, we obtain:
\begin{align}
    &L_{1}\langle(\mathbf{v}_{f,h}^{n+1}-\mathbf{v}_{f,h}^{n})\cdot\mathbf{n}_{f},\mathbf{v}_{f,h}^{n+1}\cdot\mathbf{n}_{f}\rangle_{\Gamma}\label{5.12}\\
    &=\frac{L_{1}}{2}\|\mathbf{v}_{f,h}^{n+1}\cdot\mathbf{n}_{f}\|_{L^{2}(\Gamma)}^{2}-\frac{L_{1}}{2}\|\mathbf{v}_{f,h}^{n}\cdot\mathbf{n}_{f}\|_{L^{2}(\Gamma)}^{2}\nonumber\\
    &\quad+\frac{L_{1}}{2}\|(\mathbf{v}_{f,h}^{n+1}-\mathbf{v}_{f,h}^{n})\cdot\mathbf{n}_{f}\|_{L^{2}(\Gamma)}^{2},\nonumber\\
    &L_{2}\langle(d_{t}\mathbf{u}_{p,h}^{n+1}-d_{t}\mathbf{u}_{p,h}^{n})\cdot\mathbf{n}_{p},d_{t}\mathbf{u}_{p,h}^{n+1}\cdot\mathbf{n}_{p}\rangle_{\Gamma}\label{5.13}\\
    &=\frac{L_{2}}{2}\|d_{t}\mathbf{u}_{p,h}^{n+1}\cdot\mathbf{n}_{p}\|_{L^{2}(\Gamma)}^{2}-\frac{L_{2}}{2}\|d_{t}\mathbf{u}_{p,h}^{n}\cdot\mathbf{n}_{p}\|_{L^{2}(\Gamma)}^{2}\nonumber\\
    &\quad+\frac{L_{2}}{2}\|(d_{t}\mathbf{u}_{p,h}^{n+1}-d_{t}\mathbf{u}_{p,h}^{n})\cdot\mathbf{n}_{p}\|_{L^{2}(\Gamma)}^{2},\nonumber\\
    &c_{BJS}\langle\mathcal{M}_{f}(\mathbf{v}_{f,h}^{n+1})
    -\mathcal{M}_{f}(d_{t}\mathbf{u}_{p,h}^{n}),\mathcal{M}_{f}(\mathbf{v}_{f,h}^{n+1})\rangle_{\Gamma}
    \label{5.14}\\
    &\quad+c_{BJS}\langle\mathcal{M}_{p}(d_{t}\mathbf{u}_{p,h}^{n+1})-\mathcal{M}_{p}(\mathbf{v}_{f,h}^{n}),\mathcal{M}_{p}(d_{t}\mathbf{u}_{p,h}^{n+1})\rangle_{\Gamma}\nonumber\\
	 &=\frac{1}{2}\sum_{j=1}^{d-1} c_{BJS} \big(\|\mathbf{v}_{f,h}^{n+1}\cdot\bm{\tau}_{f,j}\|_{L^{2}(\Gamma)}^{2}-\|d_{t}\mathbf{u}_{p,h}^{n}\cdot\bm{\tau}_{f,j}\|_{L^{2}(\Gamma)}^{2}\nonumber\\
	 	&\quad+\|(\mathbf{v}_{f,h}^{n+1}-d_{t}\mathbf{u}_{p,h}^{n})\cdot\bm{\tau}_{f,j}\|_{L^{2}(\Gamma)}^{2}+\|d_{t}\mathbf{u}_{p,h}^{n+1}\cdot\bm{\tau}_{p,j}\|_{L^{2}(\Gamma)}^{2}\nonumber\\
        &\quad-\|\mathbf{v}_{f,h}^{n}\cdot\bm{\tau}_{p,j}\|_{L^{2}(\Gamma)}^{2}+\|(d_{t}\mathbf{u}_{p,h}^{n+1}-\mathbf{v}_{f,h}^{n})\cdot\bm{\tau}_{p,j}\|_{L^{2}(\Gamma)}^{2}\big),\nonumber\\
   &L_{3}\langle p_{p,h}^{n+1}-p_{p,h}^{n},p_{p,h}^{n+1}\rangle_{\Gamma}\label{5.15}\\
   &=\frac{L_{3}}{2}\|p_{p,h}^{n+1}\|_{L^{2}(\Gamma)}^{2}-\frac{L_{3}}{2}\|p_{p,h}^{n}\|_{L^{2}(\Gamma)}^{2}+\frac{L_{3}}{2}\|p_{p,h}^{n+1}-p_{p,h}^{n})\|_{L^{2}(\Gamma)}^{2}.\nonumber
    \end{align}
After applying Cauchy-Schwarz and Young inequalities for the first and third terms on right-hand side of \eqref{5.11}, we get:
\begin{align}    &\langle\mathbf{v}_{f,h}^{n}\cdot\mathbf{n}_{f},p_{p,h}^{n+1}\rangle_{\Gamma}-\langle p_{p,h}^{n},\mathbf{v}_{f,h}^{n+1}\cdot\mathbf{n}_{f}\rangle_{\Gamma}\label{5.16}\\
&=\langle\mathbf{v}_{f,h}^{n}\cdot\mathbf{n}_{f},p_{p,h}^{n+1}-p_{p,h}^{n}\rangle_{\Gamma}-\langle p_{p,h}^{n},(\mathbf{v}_{f,h}^{n+1}-\mathbf{v}_{f,h}^{n})\cdot\mathbf{n}_{f}\rangle_{\Gamma}\nonumber\\
    &\leq \frac{1}{L_{3}}\|\mathbf{v}_{f,h}^{n}\cdot\mathbf{n}_{f}\|_{L^{2}(\Gamma)}^{2}
    +\frac{L_{3}}{4}\|p_{p,h}^{n+1}-p_{p,h}^{n}\|_{L^{2}(\Gamma)}^{2}
    +\frac{1}{2L_{1}}\|p_{p,h}^{n}\|_{L^{2}(\Gamma)}^{2}\nonumber\\
    &\quad+\frac{L_{1}}{2}\|(\mathbf{v}_{f,h}^{n+1}-\mathbf{v}_{f,h}^{n})\cdot\mathbf{n}_{f}\|_{L^{2}(\Gamma)}^{2}.\nonumber
    \end{align}
    Similarly, by adding and subtracting the cross term, it follows that:
\begin{align}
    &\langle d_{t}\mathbf{u}_{p,h}^{n}\cdot\mathbf{n}_{p},p_{p,h}^{n+1}\rangle_{\Gamma}-\langle p_{p,h}^{n},d_{t}\mathbf{u}_{p,h}^{n+1}\cdot\mathbf{n}_{p}\rangle_{\Gamma}\label{5.17}
    \\
    &\leq \frac{1}{L_{3}}\|d_{t}\mathbf{u}_{p,h}^{n}\cdot\mathbf{n}_{p}\|_{L^{2}(\Gamma)}^{2}+\frac{L_{3}}{4}\|p_{p,h}^{n+1}-p_{p,h}^{n}\|_{L^{2}(\Gamma)}^{2}+\frac{1}{2L_{2}}\|p_{p,h}^{n}\|_{L^{2}(\Gamma)}^{2}\nonumber\\
    &\quad+\frac{L_{2}}{2}\|(d_{t}\mathbf{u}_{p,h}^{n+1}-d_{t}\mathbf{u}_{p,h}^{n})\cdot\mathbf{n}_{p}\|_{L^{2}(\Gamma)}^{2}.\nonumber
    \end{align}
Substituting \eqref{5.12}-\eqref{5.17} into \eqref{5.11}, then summing the resulting inequality over the time steps $n=0, 1, \cdots, l$ and multiplying by $\Delta t$, we obtain:
\begin{align}
&\mathcal{E}_{h}^{l+1}+\Delta t(\mathcal{N}_{h}^{l+1}+\mathcal{M}_{h}^{l+1})+\Delta t\sum_{n=0}^{l}(\mathcal{J}_{h}^{n+1}+\mathcal{L}_{h}^{n+1})\label{5.19}\\
&\leq\mathcal{E}_{h}^{0}+\Delta t(\mathcal{N}_{h}^{0}+\mathcal{M}_{h}^{0})+\Delta t\sum_{n=0}^{l}\widetilde{\mathcal{N}}_{h}^{n},\nonumber
\end{align}
    where
\begin{align*}
    \widetilde{\mathcal{N}}_{h}^{n}=\frac{1}{L_{3}}\|\mathbf{v}_{f,h}^{n}\cdot\mathbf{n}_{f}\|_{L^{2}(\Gamma)}^{2}+\big(\frac{1}{2L_{1}}+\frac{1}{2L_{2}}\big)\|p_{p,h}^{n}\|_{L^{2}(\Gamma)}^{2}+\frac{1}{L_{3}}\|d_{t}\mathbf{u}_{p,h}^{n}\cdot\mathbf{n}_{p}\|_{L^{2}(\Gamma)}^{2}.
    \end{align*}
    For the third term on the right side of inequality \eqref{5.19}, using the trace, Korn and Young inequalities, we obtain:
\begin{align}
     \Delta t\sum_{n=0}^{l}\widetilde{\mathcal{N}}_{h}^{n}\leq&\Delta t\sum_{n=0}^{l}\big(\mu_{f}\|\varepsilon(\mathbf{v}_{f,h}^{n})\|_{L^{2}(\Omega_{f})}^{2}
    +\frac{1}{2\mu_{f}}\|K^{\frac{1}{2}}\nabla p_{p,h}^{n}\|_{L^{2}(\Omega_{p})}^{2}
    \label{5.20}\\
    &+ \frac{\widehat{C}}{L_{3}}\| d_{t}\varepsilon(\mathbf{u}_{p, h}^{n})\|_{L^{2}(\Omega_{p})}^{2}\big)+C\Delta t\sum_{n=0}^{l}\big(
    \frac{1}{4L_{3}^{2}\mu_{f}}\|\mathbf{v}_{f,h}^{n}\|_{L^{2}(\Omega_{f})}^{2}
    \nonumber\\
    &+\frac{\mu_{f}}{8K_{2}}(\frac{1}{L_{1}^{2}}
    +\frac{1}{L_{2}^{2}})\|p_{p,h}^{n}\|_{L^{2}(\Omega_{p})}^{2}\big).\nonumber
    \end{align}
Note that we use the fact of $\frac{\widehat{C}}{L_3} \leq \mu_{p}\Delta t$ which can be satisfied by an appropriate choice of $L_{3}$. Substituting \eqref{5.20} into \eqref{5.19} and applying Gr\"onwall inequality, we conclude that \eqref{5.2} holds. \eqref{5.3}-\eqref{5.7} immediately follow from the definition of $L^{2}$-norm, Cauchy-Schwarz inequality and \eqref{5.2}. The proof is complete.  
\end{proof}
\begin{remark}\label{rem5.1}
Based on \eqref{2.25} and \eqref{4.12}, we have
\begin{equation}
R_{5,h}^{n}
=
L_3 p_{p,h}^{n+1} +
\mu_f^{-1}K\big(\nabla p_{p,h}^{n+1}-\rho_f\mathbf{g}\big)\cdot \mathbf{n}_p =
L_3 p_{p,h}^{n} + \mathbf{v}_{f,h}^{n}\cdot \mathbf{n}_f +
d_t\mathbf{u}_{p,h}^{n}\cdot \mathbf{n}_p .
\label{eq:remark42}
\end{equation}
When the coefficient tensor $\mu_f^{-1}K$ is sufficiently small, the Darcy flux term
\[
\mu_f^{-1}K\big(\nabla p_{p,h}^{n+1}-\rho_f\mathbf{g}\big)\cdot \mathbf{n}_p
\]
becomes small accordingly. In view of the interface condition \eqref{2.13}, this implies
\[
\mathbf{v}_{f,h}^{n}\cdot \mathbf{n}_f + d_t\mathbf{u}_{p,h}^{n}\cdot \mathbf{n}_p \approx 0 .
\]
Therefore, if $|\mu_f^{-1}K| \ll L_3$, then \eqref{eq:remark42} reduces approximately to
\[
L_3 p_{p,h}^{n+1} \approx L_3 p_{p,h}^{n},
\]
which means that the interface condition has only a very weak influence on the discrete solution. To avoid this degeneracy and to retain the physical coupling effect, the parameter $L_3$ should be chosen consistently with the permeability scale. In the present formulation, it is therefore natural to take
\[
L_3 = O(\mu_f^{-1}K).
\]
By contrast, no sharp theoretical criterion is currently available for the choice of $L_1$ and $L_2$. These two parameters mainly act as stabilizing parameters in the Robin--Robin coupling. Choosing them too small may weaken the stabilizing effect, whereas choosing them excessively large may cause the Robin terms to dominate the original interface coupling conditions, which in turn introduces additional numerical dissipation. This observation is consistent with the numerical experiments in Section~6.
\end{remark}
\section{Error estimate}
The primary objective of this section is to establish the optimal error estimate for the  fully decoupled scheme proposed in Algorithm \ref{al1}. To facilitate the analysis, we first introduce the Stokes projection operator
$\mathcal{Q}_{h}:(\mathbf{v},p)\in\mathbf{V}_{*}\times M_{*}\rightarrow(\mathcal{Q}_{h}\mathbf{v},\mathcal{Q}_{h}p)\in\mathbf{V}_{*,h}\times M_{*,h}$ by:
\begin{align}
&2\mu_{*}\big(\varepsilon(\mathcal{Q}_{h}\mathbf{v}),\varepsilon(\bm{\omega}_{h})\big)_{\Omega_{*}}-\big(\mathcal{Q}_{h}p,\nabla\cdot\bm{\omega}_{h}\big)_{\Omega_{*}}\label{6.1}\\
&=2\mu_{*}\big(\varepsilon(\mathbf{v}),\varepsilon(\bm{\omega}_{h})\big)_{\Omega_{*}}-\big(p,\nabla\cdot\bm{\omega}_{h}\big)_{\Omega_{*}},~\forall~\bm{\omega}_{h}\in\mathbf{V}_{*,h},\nonumber\\
&\big(\nabla\cdot\mathcal{Q}_{h}\mathbf{v},q_{h}\big)_{\Omega_{*}}=\big(\nabla\cdot\mathbf{v},q_{h}\big)_{\Omega_{*}},~\forall~q_{h}\in M_{*,h},\label{6.2}
\end{align}
where the subscript $*=f,p$. Under standard regularity assumptions for $(\mathbf{v}, p)$, the following approximation property holds: 
for any $\mathbf{v} \in \mathbf{H}^{3}(\Omega_{*})$ and $p \in H^{2}(\Omega_{*})$,
\begin{align}
&\|\mathcal{Q}_{h}\mathbf{v}-\mathbf{v}\|_{H^{1}(\Omega_{*})}+\|\mathcal{Q}_{h}p-p\|_{L^{2}(\Omega_{*})}\leq Ch^{2}\big(\|\mathbf{v}\|_{H^{3}(\Omega_{*})}+\|p\|_{H^{2}(\Omega_{*})}\big).\label{6.3}
\end{align}

 Next, for any $\varphi \in H^{1}(\Omega_{p})$, we define the Ritz projection operator $\mathcal{R}_{h}: M_{p}\rightarrow M_{p,h}$ by:
\begin{align}
	&(\mu_{f}^{-1}K\nabla\mathcal{R}_{h}\varphi,\nabla\psi_{h})_{\Omega_{p}}=(\mu_{f}^{-1}K\nabla\varphi,\nabla\psi_{h})_{\Omega_{p}},~~~~~\forall \psi_{h}\in M_{p,h}.\label{6.4}
\end{align}

As established in \cite{brenner2008mathematical}, the operator $\mathcal{R}_{h}$ satisfies the following approximation estimate:
\begin{align}
&\|\mathcal{R}_{h}\varphi-\varphi\|_{L^{2}(\Omega_{p})}+h\|\nabla(\mathcal{R}_{h}\varphi-\varphi)\|_{L^{2}(\Omega_{p})}\leq Ch^{3}\|\varphi\|_{H^{3}(\Omega_{p})},~\forall \varphi\in H^{3}(\Omega_{p}).\label{6.5}
\end{align}
Let 
$
e_{\mathbf{v}_{f}}^{n}=\mathbf{v}_{f}(t_{n})-\mathbf{v}_{f,h}^{n},
e_{p_{f}}^{n}=p_{f}(t_{n})-p_{f,h}^{n}, 
e_{\mathbf{v}_{p}}^{n}=\mathbf{v}_{p}(t_{n})-\mathbf{v}_{p,h}^{n},
e_{\mathbf{u}_{p}}^{n}=\mathbf{u}_{p}(t_{n})-\mathbf{u}_{p,h}^{n},
$
$
e_{\beta_{p}}^{n}=\beta_{p}(t_{n})-\beta_{p,h}^{n},
e_{p_{p}}^{n}=p_{p}(t_{n})-p_{p,h}^{n}
$ denote the errors of the primary variables.
And let:
\begin{align*}
    e_{\mathcal{A}_{*}}^{n} &= \mathcal{A}_{*}(t_{n}) - \mathcal{Q}_{h}\mathcal{A}_{*}(t_{n}) + \mathcal{Q}_{h}\mathcal{A}_{*}(t_{n}) - \mathcal{A}_{*,h}^{n} = \Lambda_{\mathcal{A}_{*}}^{n} + \Theta_{\mathcal{A}_{*}}^{n}, \\
    e_{\mathcal{B}_{p}}^{n} &= \mathcal{B}_{p}(t_{n}) - \mathcal{R}_{h}\mathcal{B}_{p}(t_{n}) + \mathcal{R}_{h}\mathcal{B}_{p}(t_{n}) - \mathcal{B}_{p,h}^{n} = \widetilde{\Lambda}_{\mathcal{B}_{p}}^{n} + \widetilde{\Theta}_{\mathcal{B}_{p}}^{n},
\end{align*}
where $
\mathcal{A}_{*}=\mathbf{v}_{f},p_{f},\mathbf{v}_{p},\mathbf{u}_{p},\beta_{p},p_{p},
\ \text{and}\ \mathcal{B}_{p} = \beta_{p},p_{p}.
$

To simsplify our error analysis, we introduce the following notation:
\begin{equation*}
\begin{aligned}
\mathcal{E}_{\Theta}^{n+1}=&\frac{\rho_{f}}{2}\|\Theta_{\mathbf{v}_{f}}^{n+1}\|_{L^{2}(\Omega_{f})}^{2}+\frac{\rho_{p}}{2}\|\Theta_{\mathbf{v}_{p}}^{n+1}\|_{L^{2}(\Omega_{p})}^{2}+\mu_{p}\|\varepsilon(\Theta_{\mathbf{u}_{p}}^{n+1})\|_{L^{2}(\Omega_{p})}^{2}+\frac{c_{0}}{2}\|\widetilde{\Theta}_{p_{p}}^{n+1}\|_{L^{2}(\Omega_{p})}^{2}\\
  &+\frac{1}{2\lambda_{p}}\|\alpha\widetilde{\Theta}_{p_{p}}^{n+1}-\Theta_{\beta_{p}}^{n+1}\|_{L^{2}(\Omega_{p})}^{2},
 \\
\mathcal{J}_{\Theta}^{n+1}
=&2\mu_{f}\|\varepsilon(\Theta_{\mathbf{v}_{f}}^{n+1})\|_{L^{2}(\Omega_{f})}^{2}
+\mu_{f}^{-1}\|K^{\frac{1}{2}}\nabla\widetilde{\Theta}_{p_{p}}^{n+1}\|_{L^{2}(\Omega_{p})}^{2}
+\frac{\rho_{f}\Delta t}{2}\|d_{t}\Theta_{\mathbf{v}_{f}}^{n+1}\|_{L^{2}(\Omega_{f})}^{2}\\
&+\frac{\rho_{p}\Delta t}{2}\|d_{t}\Theta_{\mathbf{v}_{p}}^{n+1}\|_{L^{2}(\Omega_{p})}^{2}
+\mu_{p}\Delta t\|d_{t}\varepsilon(\Theta_{\mathbf{u}_{p}}^{n+1})\|_{L^{2}(\Omega_{p})}^{2}
+\frac{c_{0}\Delta t}{2}\|d_{t}\widetilde{\Theta}_{p_{p}}^{n+1}\|_{L^{2}(\Omega_{p})}^{2}\\
&+\frac{\Delta t}{2\lambda_{p}}\|d_{t}(\alpha\widetilde{\Theta}_{p_{p}}^{n+1}-\Theta_{\beta_{p}}^{n+1})\|_{L^{2}(\Omega_{p})}^{2},
\\
\mathcal{N}_{\Theta}^{n+1}
=&\frac{L_{1}}{2}\|\Theta_{\mathbf{v}_{f}}^{n+1}\cdot\mathbf{n}_{f}\|_{L^{2}(\Gamma)}^{2}
+\frac{L_{2}}{2}\|d_{t}\Theta_{\mathbf{u}_{p}}^{n+1}\cdot\mathbf{n}_{p}\|_{L^{2}(\Gamma)}^{2}
+\frac{L_{3}}{2}\|\widetilde{\Theta}_{p_{p}}^{n+1}\|_{L^{2}(\Gamma)}^{2},
\\
\mathcal{D}_{\Theta}^{n+1}
=&\frac{L_{1}}{2}\|(\Theta_{\mathbf{v}_{f}}^{n+1}
-\Theta_{\mathbf{v}_{f}}^{n})\cdot\mathbf{n}_{f}\|_{L^{2}(\Gamma)}^{2}
+\frac{L_{2}}{2}\|(d_{t}\Theta_{\mathbf{u}_{p}}^{n+1}
-d_{t}\Theta_{\mathbf{u}_{p}}^{n})\cdot\mathbf{n}_{p}\|_{L^{2}(\Gamma)}^{2}\\
&+\frac{L_{3}}{2}\|\widetilde{\Theta}_{p_{p}}^{n+1}
-\widetilde{\Theta}_{p_{p}}^{n}\|_{L^{2}(\Gamma)}^{2},
\\
\mathcal{M}_{\Theta}^{n+1}
=&\frac{1}{2}\sum_{j=1}^{d-1}c_{BJS}\big(\|\Theta_{\mathbf{v}_{f}}^{n+1}\cdot\bm{\tau}_{f,j}\|_{L^{2}(\Gamma)}^{2}
+\|d_{t}\Theta_{\mathbf{u}_{p}}^{n+1}\cdot\bm{\tau}_{p,j}\|_{L^{2}(\Gamma)}^{2}\big),
\\
\mathcal{L}_{\Theta}^{n+1}
=&\frac{1}{2}\sum_{j=1}^{d-1}c_{BJS}\big(\|(\Theta_{\mathbf{v}_{f}}^{n+1}
-d_{t}\Theta_{\mathbf{u}_{p}}^{n})\cdot\bm{\tau}_{f,j}\|_{L^{2}(\Gamma)}^{2}
+\|(d_{t}\Theta_{\mathbf{u}_{p}}^{n+1}
-\Theta_{\mathbf{v}_{f}}^{n})\cdot\bm{\tau}_{p,j}\|_{L^{2}(\Gamma)}^{2}\big).
\end{aligned}
\end{equation*}
Before proceeding with the error estimates, we present the following error equation.
\begin{lemma}
Let $(\mathbf{v}_{f}, p_{f}, \mathbf{v}_{p}, \mathbf{u}_{p}, \beta_{p}, p_{p})$ be the exact solution of \eqref{3.1}-\eqref{3.6} at time $t_{n+1}$, and let $(\mathbf{v}_{f,h}^{n+1}, p_{f,h}^{n+1}, \mathbf{v}_{p,h}^{n+1}, \mathbf{u}_{p,h}^{n+1}, \beta_{p,h}^{n+1}, p_{p,h}^{n+1})$ be the numerical solution generated by Algorithm \ref{al1}. Then, the following error equation hold:
\begin{align}
    &\mathcal{E}_{\Theta}^{l+1}+\Delta t(\mathcal{N}_{\Theta}^{l+1}+\mathcal{M}_{\Theta}^{l+1})+\Delta t\sum_{n=0}^{l}(\mathcal{J}_{\Theta}^{n+1}+\mathcal{D}_{\Theta}^{n+1}+\mathcal{L}_{\Theta}^{n+1})\label{6.6}\\
    &=\mathcal{E}_{\Theta}^{0}+\Delta t(\mathcal{N}_{\Theta}^{0}+\mathcal{M}_{\Theta}^{0})+\Delta t\sum_{i=1}^{7}\sum_{n=0}^{l}\Phi_{i},\nonumber
\end{align}
    where:
\begin{align*}
\Phi_{1}=&-\rho_{f}\big(d_{t}\Lambda_{\mathbf{v}_{f}}^{n+1},\Theta_{\mathbf{v}_{f}}^{n+1}\big)_{\Omega_{f}}
+\frac{\alpha}{\lambda_{p}}\big(d_{t}\widetilde{\Lambda}_{p_{p}}^{n+1},\Theta_{\beta_{p}}^{n+1}\big)_{\Omega_{p}}
-\frac{1}{\lambda_{p}}\big(d_{t}\Lambda_{\beta_{p}}^{n+1},\Theta_{\beta_{p}}^{n+1}\big)_{\Omega_{p}}\\
&-\big(c_{0}+\frac{\alpha^{2}}{\lambda_{p}}\big)\big(d_{t}\widetilde{\Lambda}_{p_{p}}^{n+1},\widetilde{\Theta}_{p_{p}}^{n+1}\big)_{\Omega_{p}}
+\frac{\alpha}{\lambda_{p}}\big(\Lambda_{\beta_{p}}^{n+1},\widetilde{\Theta}_{p_{p}}^{n+1}\big)_{\Omega_{p}},\\
\Phi_{2}
=&L_{1}\langle (\Lambda_{\mathbf{v}_{f}}^{n}
-\Lambda_{\mathbf{v}_{f}}^{n+1})\cdot\mathbf{n}_{f},
\Theta_{\mathbf{v}_{f}}^{n+1}\cdot\mathbf{n}_{f}\rangle_{\Gamma}
-\langle \widetilde{\Lambda}_{p_{p}}^{n}
,\Theta_{\mathbf{v}_{f}}^{n+1}\cdot\mathbf{n}_{f}\rangle_{\Gamma}
\\
&-c_{BJS}\langle \mathcal{M}_{p}(d_{t}\Lambda_{\mathbf{u}_{p}}^{n}),\mathcal{M}_{f}(\Theta_{\mathbf{v}_{f}}^{n+1})\rangle_{\Gamma}-c_{BJS}\langle\mathcal{M}_{f}(\Lambda_{\mathbf{v}_{f}}^{n+1}),\mathcal{M}_{f}(\Theta_{\mathbf{v}_{f}}^{n+1})\rangle_{\Gamma}\\
&+L_{3}\langle\widetilde{\Lambda}_{p_{p}}^{n}
-\widetilde{\Lambda}_{p_{p}}^{n+1},\widetilde{\Theta}_{p_{p}}^{n+1}\rangle_{\Gamma}
+\langle\Lambda_{\mathbf{v}_{f}}^{n}\cdot\mathbf{n}_{f},\widetilde{\Theta}_{p_{p}}^{n+1}\rangle_{\Gamma}+\langle d_{t}\Lambda_{\mathbf{u}_{p}}^{n}\cdot\mathbf{n}_{p}
,\widetilde{\Theta}_{p_{p}}^{n+1}\rangle_{\Gamma},
\end{align*}
\begin{align*}
\Phi_{3}
=&-\langle \widetilde{\Theta}_{p_{p}}^{n},
\Theta_{\mathbf{v}_{f}}^{n+1}\cdot\mathbf{n}_{f}\rangle_{\Gamma}
-\langle\widetilde{\Theta}_{p_{p}}^{n}
,d_{t}\Theta_{\mathbf{u}_{p}}^{n+1}\cdot\mathbf{n}_{p}\rangle_{\Gamma}
+\langle\Theta_{\mathbf{v}_{f}}^{n}\cdot\mathbf{n}_{f}
,\widetilde{\Theta}_{p_{p}}^{n+1}\rangle_{\Gamma}\\
&+\langle d_{t}\Theta_{\mathbf{u}_{p}}^{n}\cdot\mathbf{n}_{p}
,\widetilde{\Theta}_{p_{p}}^{n+1}\rangle_{\Gamma},\\
\Phi_{4}
=&\rho_{f}\big(d_{t}\mathbf{v}_{f}(t_{n+1})
-\partial_{t}\mathbf{v}_{f}(t_{n+1}),\Theta_{\mathbf{v}_{f}}^{n+1}\big)_{\Omega_{f}}
+\rho_{p}\big(\partial_{t}\mathbf{u}_{p}(t_{n+1})
-d_{t}\mathbf{u}_{p}(t_{n+1}),d_{t}\Theta_{\mathbf{v}_{p}}^{n+1}\big)_{\Omega_{p}}\\
&+\rho_{p}\big(d_{t}\mathbf{v}_{p}(t_{n+1})
-\partial_{t}\mathbf{v}_{p}(t_{n+1}),d_{t}\Theta_{\mathbf{u}_{p}}^{n+1}\big)_{\Omega_{p}}+\frac{\alpha}{\lambda_{p}}\big(\partial_{t}\beta_{p}(t_{n+1})
-d_{t}\beta_{p}(t_{n+1}),\widetilde{\Theta}_{p_{p}}^{n+1}\big)_{\Omega_{p}}\\
&+\big(c_{0}+\frac{\alpha^{2}}{\lambda_{p}}\big)\big(d_{t}p_{p}(t_{n+1})
-\partial_{t}p_{p}(t_{n+1}),\widetilde{\Theta}_{p_{p}}^{n+1}\big)_{\Omega_{p}},\\
\Phi_{5}
=&c_{BJS}\langle\mathcal{M}_{p}(d_{t}\mathbf{u}_{p}(t_{n})
-\partial_{t}\mathbf{u}_{p}(t_{n})),\mathcal{M}_{f}(\Theta_{\mathbf{v}_{f}}^{n+1})\rangle_{\Gamma}+\langle (\partial_{t}\mathbf{u}_{p}(t_{n})
-d_{t}\mathbf{u}_{p}(t_{n}))\cdot\mathbf{n}_{p}
,\widetilde{\Theta}_{p_{p}}^{n+1}\rangle_{\Gamma}\\
&+L_{2}\langle (d_{t}\mathbf{u}_{p}(t_{n+1})
-\partial_{t}\mathbf{u}_{p}(t_{n+1}))\cdot\mathbf{n}_{p},d_{t}\Theta_{\mathbf{u}_{p}}^{n+1}\cdot\mathbf{n}_{p}\rangle_{\Gamma}\\
&+L_{2}\langle (\partial_{t}\mathbf{u}_{p}(t_{n})
-d_{t}\mathbf{u}_{p}(t_{n}))\cdot\mathbf{n}_{p}
,d_{t}\Theta_{\mathbf{u}_{p}}^{n+1}\cdot\mathbf{n}_{p}\rangle_{\Gamma}\\
&+c_{BJS}\langle\mathcal{M}_{p}(d_{t}\mathbf{u}_{p}(t_{n+1})
-\partial_{t}\mathbf{u}_{p}(t_{n+1}))
,\mathcal{M}_{p}(d_{t}\Theta_{\mathbf{u}_{p}}^{n+1})\rangle_{\Gamma},\\
\Phi_{6}=
&-\rho_{p}\big(\Lambda_{\mathbf{v}_{p}}^{n+1}
,d_{t}\Theta_{\mathbf{v}_{p}}^{n+1}\big)_{\Omega_{p}}
+\rho_{p}\big(d_{t}\Lambda_{\mathbf{u}_{p}}^{n+1},d_{t}\Theta_{\mathbf{v}_{p}}^{n+1}\big)_{\Omega_{p}}
-\rho_{p}\big(d_{t}\Lambda_{\mathbf{v}_{p}}^{n+1}
, d_{t}\Theta_{\mathbf{u}_{p}}^{n+1}\big)_{\Omega_{p}},\\
\Phi_{7}=
&L_{2}\langle (d_{t}\Lambda_{\mathbf{u}_{p}}^{n}
-d_{t}\Lambda_{\mathbf{u}_{p}}^{n+1})\cdot\mathbf{n}_{p},d_{t}\Theta_{\mathbf{u}_{p}}^{n+1}\cdot\mathbf{n}_{p}\rangle_{\Gamma}
-\langle \widetilde{\Lambda}_{p_{p}}^{n},d_{t}\Theta_{\mathbf{u}_{p}}^{n+1}\cdot\mathbf{n}_{p}\rangle_{\Gamma}\\
&-c_{BJS}\langle\mathcal{M}_{f}(\Lambda_{\mathbf{v}_{f}}^{n})
,\mathcal{M}_{p}(d_{t}\Theta_{\mathbf{u}_{p}}^{n+1})\rangle_{\Gamma}\nonumber
-c_{BJS}\langle\mathcal{M}_{p}(d_{t}\Lambda_{\mathbf{u}_{p}}^{n+1})
,\mathcal{M}_{p}(d_{t}\Theta_{\mathbf{u}_{p}}^{n+1})\rangle_{\Gamma}.
\end{align*}
\end{lemma}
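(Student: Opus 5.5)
The plan is to replay the proof of Theorem \ref{lem5.1} on the error equations rather than on the scheme itself.

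\textbf{Step 1: derive the error equations.} Evaluate the continuous problem \eqref{3.1}--\eqref{3.6} at $t=t_{n+1}$, restricting test functions to the discrete spaces. Then replace each time derivative $\partial_t$ by $d_t$ plus a truncation remainder $\partial_t-d_t$. On the interface, use Theorem \ref{equivalence} to express the exact data $R_i(t_{n+1})$ through the exact solution at $t_{n+1}$. Rewrite them in the lagged form used by the scheme, namely through values at $t_n$ together with $d_t\mathbf u_p(t_n)$. The discrepancy between the exact interface data at $t_{n+1}$ and at $t_n$, and between $\partial_t\mathbf u_p$ and $d_t\mathbf u_p$, produces the interface consistency terms.

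Subtracting \eqref{4.2}--\eqref{4.7} with the updates \eqref{4.8}--\eqref{4.12} gives equations for $e^{n+1}$. Split every error as $\Lambda+\Theta$ or $\widetilde\Lambda+\widetilde\Theta$. By \eqref{6.1}--\eqref{6.2} and \eqref{6.4}, the projection parts $\Lambda$ cancel from the viscous, elastic, divergence and Darcy bilinear forms, which is why $\mathcal{Q}_h$ and $\mathcal{R}_h$ were chosen. The $\Lambda$ contributions survive only in the mass, storage and interface terms.

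\textbf{Step 2: test with the same functions as in Theorem \ref{lem5.1}.} Apply $d_t$ to the $\beta$-error equation (the analogue of \eqref{4.6}). Then test with $(\Theta_{\mathbf v_f}^{n+1},\Theta_{p_f}^{n+1})$ on the fluid side and with $(\rho_p d_t\Theta_{\mathbf v_p}^{n+1},\,d_t\Theta_{\mathbf u_p}^{n+1},\,\Theta_{\beta_p}^{n+1},\,\widetilde\Theta_{p_p}^{n+1})$ on the poroelastic side. The $\rho_p$-weighted first equation combines with the momentum equation so that $(d_t\Theta_{\mathbf v_p},d_t\Theta_{\mathbf u_p})$ cross terms cancel. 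Only the $\Lambda_{\mathbf v_p}$, $d_t\Lambda_{\mathbf u_p}$ remainders are left, which give $\Phi_6$, and the time-truncation remainders, which give $\Phi_4$.

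The $\beta$ and $p$ equations combine, via $(c_0+\alpha^2/\lambda_p)$ and $\alpha/\lambda_p$, into $\frac{c_0}{2}d_t\|\widetilde\Theta_{p_p}\|^2+\frac{1}{2\lambda_p}d_t\|\alpha\widetilde\Theta_{p_p}-\Theta_{\beta_p}\|^2$ plus the numerical-dissipation terms. This step uses identity \eqref{5.1} and the cancellation of the mixed terms $(\Theta_{\beta_p},\nabla\cdot d_t\Theta_{\mathbf u_p})$. The leftover $\Lambda$ pieces form $\Phi_1$.

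\textbf{Step 3: treat the interface terms exactly as in \eqref{5.11}--\eqref{5.15}.} Apply \eqref{5.1} to the $L_1$, $L_2$, $L_3$ and $c_{BJS}$ pairings. This yields $d_t\mathcal N_\Theta$, $d_t\mathcal M_\Theta$, $\mathcal D_\Theta$ and $\mathcal L_\Theta$. The purely discrete cross pressure–velocity terms are collected into $\Phi_3$, and are not estimated here since the lemma is an identity. The lagged $\Lambda$ interface data go into $\Phi_2$ (fluid test function) and $\Phi_7$ (Biot test function). The truncation errors of the lagged $d_t\mathbf u_p(t_n)$ versus $\partial_t\mathbf u_p(t_n)$ go into $\Phi_5$. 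Finally, multiply by $\Delta t$ and sum over $n=0,\dots,l$; the $d_t$ terms telescope to give \eqref{6.6}.

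\textbf{Main obstacle.} I expect the bookkeeping in Step 1 for the interface data to be the hard part. One must check that the exact solution satisfies the lagged Robin relation with precisely these remainders. In particular, the exact $R_5$ uses $\mathbf v_f\cdot\mathbf n_f+\partial_t\mathbf u_p\cdot\mathbf n_p$, and this must be matched against $\mathbf v_{f,h}^n\cdot\mathbf n_f+d_t\mathbf u_{p,h}^n\cdot\mathbf n_p$. The time shift $t_{n+1}\to t_n$ in the exact terms must also be absorbed consistently: by the $L_i(\Lambda^n-\Lambda^{n+1})$ differences for the projection parts, and by exact-solution time-difference terms for the rest. I would first write the exact Robin data at $t_n$, verify signs component by component against \eqref{4.8}--\eqref{4.12}, and only then perform the splitting.
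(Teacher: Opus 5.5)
Your route is the paper's route: the error equations \eqref{6.13}--\eqref{6.18}, the same test functions (with $d_t$ applied to the $\beta$-equation), identity \eqref{5.1} on the Robin and BJS pairings, and telescoping. The cancellations you list are correct: the $\rho_p$ cross terms, the mixed terms $(\Theta_{\beta_p},\nabla\cdot d_t\Theta_{\mathbf u_p})$, and the $\Lambda$'s in the viscous, elastic, divergence and Darcy forms.

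The gap is exactly at the step you call the main obstacle, and it is not a sign-checking issue. Subtracting \eqref{4.2} from \eqref{3.1} at $t_{n+1}$, the interface data contribute $\langle R_1(t_{n+1})-R_{1,h}^n,\mathbf w_{f,h}\cdot\mathbf n_f\rangle-\langle R_2(t_{n+1})-R_{2,h}^n,\mathcal M_f(\mathbf w_{f,h})\rangle$, and similarly for $R_3,R_4,R_5$ on the Biot side. Moreover
\[
R_i(t_{n+1})-R_{i,h}^n=e_{R_i}^n+\bigl(R_i(t_{n+1})-R_i(t_n)\bigr).
\]
The $e_{R_i}^n$ part gives the $\Theta$/$\Lambda$ interface terms of $\Phi_2$, $\Phi_3$, $\Phi_7$ and the $t_n$-defects $d_t\mathbf u_p(t_n)-\partial_t\mathbf u_p(t_n)$ in $\Phi_5$. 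The second part is the discrepancy you yourself identify in Step 1, for instance
\begin{align*}
&\langle L_1(\mathbf v_f(t_{n+1})-\mathbf v_f(t_n))\cdot\mathbf n_f-(p_p(t_{n+1})-p_p(t_n)),\Theta_{\mathbf v_f}^{n+1}\cdot\mathbf n_f\rangle_{\Gamma}\\
&\quad-c_{BJS}\langle\mathcal M_p(\partial_t\mathbf u_p(t_{n+1})-\partial_t\mathbf u_p(t_n)),\mathcal M_f(\Theta_{\mathbf v_f}^{n+1})\rangle_{\Gamma},
\end{align*}
plus the analogous $R_3,R_4,R_5$ increments tested with $d_t\Theta_{\mathbf u_p}^{n+1}$ and $\widetilde\Theta_{p_p}^{n+1}$. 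This part occurs in none of $\Phi_1$--$\Phi_7$. $\Phi_5$ contains only same-time defects. The differences $L_i(\Lambda^n-\Lambda^{n+1})$ in $\Phi_2$, $\Phi_7$ come from splitting $e^{n+1}-e^n$ into $\Theta$ and $\Lambda$ parts; they are present in addition to this residual, not in place of it.

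So correct bookkeeping does not produce \eqref{6.6} with only $\Phi_1$--$\Phi_7$. The paper reaches that form because the interface right-hand sides of \eqref{6.13}, \eqref{6.16}, \eqref{6.18} are exactly $\langle e^n_{R_i},\cdot\rangle_\Gamma$. In other words, the exact Robin data are tacitly taken at $t_n$ instead of $t_{n+1}$.

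To make your argument sound, add an extra term, say $\Phi_8$, collecting the pairings of $R_i(t_{n+1})-R_i(t_n)$. Since $R_i(t_{n+1})-R_i(t_n)=\int_{t_n}^{t_{n+1}}\partial_tR_i\,dt$ involves interface traces of $\partial_t\mathbf v_f$, $\partial_tp_p$ and $\partial_{tt}^2\mathbf u_p$, it is $O(\Delta t)$. The later estimate therefore survives, but with additional $L_i$-weighted $\Delta t$ contributions that the stated theorem does not show.

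Two smaller corrections:
\begin{itemize}
\item The term $-\frac{\alpha}{\lambda_p}(\partial_t\beta_p,\psi_p)$ leaves $\frac{\alpha}{\lambda_p}(d_t\Lambda_{\beta_p}^{n+1},\widetilde\Theta_{p_p}^{n+1})$, not the undifferentiated $\Lambda_{\beta_p}^{n+1}$ of the stated $\Phi_1$.
\item Identity \eqref{5.1} applied to the interface pairings gives plain increments $\mathcal N_\Theta^{n+1}-\mathcal N_\Theta^n$ and $\mathcal M_\Theta^{n+1}-\mathcal M_\Theta^n$, with no factor $1/\Delta t$. They are not $d_t\mathcal N_\Theta$, $d_t\mathcal M_\Theta$ as you wrote, which is why \eqref{6.6} carries $\Delta t(\mathcal N_\Theta^{l+1}+\mathcal M_\Theta^{l+1})$.
\end{itemize}
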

\begin{proof}
Firstly, it is straightforward to see that the following error equations associated with the Robin type interface conditions hold:
\begin{align*}
    &e_{R_{1}}^{n}
    =L_{1}(\mathbf{v}_{f}(t_{n})-\mathbf{v}_{f,h}^{n})\cdot\mathbf{n}_{f}
    -(p_{p}(t_{n})-p_{p,h}^{n}),\\
    &e_{R_{2}}^{n}=c_{BJS}\mathcal{M}_{p}(\partial_{t}\mathbf{u}_{p}(t_{n}))
    -c_{BJS}\mathcal{M}_{p}(d_{t}\mathbf{u}_{p,h}^{n}),\\
    &e_{R_{3}}^{n}
    =L_{2}(\partial_{t}\mathbf{u}_{p}(t_{n})
    -d_{t}\mathbf{u}_{p,h}^{n})\cdot\mathbf{n}_{p}-(p_{p}(t_{n})-p_{p,h}^{n}),\\
    &e_{R_{4}}^{n}=c_{BJS}\mathcal{M}_{p}(\mathbf{v}_{f}(t_{n}))-c_{BJS}\mathcal{M}_{p}(\mathbf{v}_{f,h}^{n}),\\
    &e_{R_{5}}^{n}=L_{3}(p_{p}(t_{n})-p_{p,h}^{n})
    +(\mathbf{v}_{f}(t_{n})-\mathbf{v}_{f,h}^{n})\cdot\mathbf{n}_{f}
    +(\partial_{t}\mathbf{u}_{p}(t_{n})-d_{t}\mathbf{u}_{p,h}^{n})\cdot\mathbf{n}_{p}, 
    \end{align*}
 By evaluating the weak form \eqref{3.1}-\eqref{3.2} at time $t = t_{n+1}$, subtracting the fully discrete scheme \eqref{4.2}-\eqref{4.3} from it and utilizing the definition of the operator $\mathcal{Q}_{h}$, together with the above properties, we have:
\begin{align}
&\rho_{f}\big(d_{t}\Theta_{\mathbf{v}_{f}}^{n+1},
\mathbf{w}_{f,h}\big)_{\Omega_{f}}
+2\mu_{f}\big(\varepsilon(\Theta_{\mathbf{v}_{f}}^{n+1})
,\varepsilon(\mathbf{w}_{f,h})\big)_{\Omega_{f}}
-\big(\Theta_{p_{f}}^{n+1},\nabla\cdot\mathbf{w}_{f,h}\big)_{\Omega_{f}}\label{6.13}\\
&+L_{1}\langle (\Theta_{\mathbf{v}_{f}}^{n+1}
-\Theta_{\mathbf{v}_{f}}^{n})\cdot\mathbf{n}_{f}
,\mathbf{w}_{f,h}\cdot\mathbf{n}_{f}\rangle_{\Gamma}
+c_{BJS}\langle\mathcal{M}_{f}(\Theta_{\mathbf{v}_{f}}^{n+1})
,\mathcal{M}_{f}(\mathbf{w}_{f,h})\rangle_{\Gamma}\nonumber\\
& = L_{1}\langle (\Lambda_{\mathbf{v}_{f}}^{n}
-\Lambda_{\mathbf{v}_{f}}^{n+1})\cdot\mathbf{n}_{f}
,\mathbf{w}_{f,h}\cdot\mathbf{n}_{f}\rangle_{\Gamma}
-\langle \widetilde{\Theta}_{p_{p}}^{n}
,\mathbf{w}_{f,h}\cdot\mathbf{n}_{f}\rangle_{\Gamma}
-\langle \widetilde{\Lambda}_{p_{p}}^{n}
,\mathbf{w}_{f,h}\cdot\mathbf{n}_{f}\rangle_{\Gamma}\nonumber\\
&-c_{BJS}\langle \mathcal{M}_{p}(d_{t}\Theta_{\mathbf{u}_{p}}^{n})
,\mathcal{M}_{f}(\mathbf{w}_{f,h})\rangle_{\Gamma}
-c_{BJS}\langle \mathcal{M}_{p}(d_{t}\Lambda_{\mathbf{u}_{p}}^{n})
,\mathcal{M}_{f}(\mathbf{w}_{f,h})\rangle_{\Gamma}\nonumber\\
 &+c_{BJS}\langle\mathcal{M}_{p}(d_{t}\mathbf{u}_{p}(t_{n})
 -\partial_{t}\mathbf{u}_{p}(t_{n}))
 ,\mathcal{M}_{f}(\mathbf{w}_{f,h})\rangle_{\Gamma}
 -\rho_{f}\big(d_{t}\Lambda_{\mathbf{v}_{f}}^{n+1}
 ,\mathbf{w}_{f,h}\big)_{\Omega_{f}}\nonumber\\
&+\rho_{f}\big(d_{t}\mathbf{v}_{f}(t_{n+1})
-\partial_{t}\mathbf{v}_{f}(t_{n+1})
,\mathbf{w}_{f,h}\big)_{\Omega_{f}}
-c_{BJS}\langle\mathcal{M}_{f}(\Lambda_{\mathbf{v}_{f}}^{n+1})
,\mathcal{M}_{f}(\mathbf{w}_{f,h})\rangle_{\Gamma},\nonumber\\
 &\big(\nabla\cdot \Theta_{\mathbf{v}_{f}}^{n+1},q_{f,h}\big)_{\Omega_{f}}=0,\label{6.14}
 \end{align}
 Similarly, for the equations \eqref{3.3}-\eqref{3.6} at $t_{n+1}$ and equations \eqref{4.4}-\eqref{4.7}, we find that the following equations hold:
 \begin{align}
&\big(\Theta_{\mathbf{v}_{p}}^{n+1}
,\mathbf{z}_{p,h}\big)_{\Omega_{p}}
-\big(d_{t}\Theta_{\mathbf{u}_{p}}^{n+1}
,\mathbf{z}_{p,h}\big)_{\Omega_{p}}\label{6.15}\\
&=\big(\partial_{t}\mathbf{u}_{p}(t_{n+1})
-d_{t}\mathbf{u}_{p}(t_{n+1})
,\mathbf{z}_{p,h}\big)_{\Omega_{p}}
-\big(\Lambda_{\mathbf{v}_{p}}^{n+1}
,\mathbf{z}_{p,h}\big)_{\Omega_{p}}\nonumber\\
&\quad+\big(d_{t}\Lambda_{\mathbf{u}_{p}}^{n+1}
,\mathbf{z}_{p,h}\big)_{\Omega_{p}},\nonumber\\
&\rho_{p}\big(d_{t}\Theta_{\mathbf{v}_{p}}^{n+1}
,\mathbf{w}_{p,h}\big)_{\Omega_{p}}
+2\mu_{p}\big(\varepsilon(\Theta_{\mathbf{u}_{p}}^{n+1})
,\varepsilon(\mathbf{w}_{p,h})\big)_{\Omega_{p}}
-\big(\Theta_{\beta_{p}}^{n+1}
,\nabla\cdot\mathbf{w}_{p}\big)_{\Omega_{p}}\label{6.16}\\
&\quad+L_{2}\langle (d_{t}\Theta_{\mathbf{u}_{p}}^{n+1}
-d_{t}\Theta_{\mathbf{u}_{p}}^{n})\cdot\mathbf{n}_{p}
,\mathbf{w}_{p,h}\cdot\mathbf{n}_{p}\rangle_{\Gamma}\nonumber\\
&\quad+c_{BJS}\langle\mathcal{M}_{p}(d_{t}\Theta_{\mathbf{u}_{p}}^{n+1})
,\mathcal{M}_{p}(\mathbf{w}_{p,h})\rangle_{\Gamma}\nonumber\\
&=L_{2}\langle (d_{t}\Lambda_{\mathbf{u}_{p}}^{n}
-d_{t}\Lambda_{\mathbf{u}_{p}}^{n+1})\cdot\mathbf{n}_{p}
,\mathbf{w}_{p,h}\cdot\mathbf{n}_{p}\rangle_{\Gamma}-\langle \widetilde{\Lambda}_{p_{p}}^{n}
,\mathbf{w}_{p,h}\cdot\mathbf{n}_{p}\rangle_{\Gamma}\nonumber\\
&\quad+L_{2}\langle(d_{t}\mathbf{u}_{p}(t_{n+1})
-\partial_{t}\mathbf{u}_{p}(t_{n+1}))\cdot\mathbf{n}_{p}
,\mathbf{w}_{p,h}\cdot\mathbf{n}_{p}\rangle_{\Gamma}\nonumber\\
&\quad+L_{2}\langle(\partial_{t}\mathbf{u}_{p}(t_{n})
-d_{t}\mathbf{u}_{p}(t_{n}))\cdot\mathbf{n}_{p}
,\mathbf{w}_{p,h}\cdot\mathbf{n}_{p}\rangle_{\Gamma}
-\langle\widetilde{\Theta}_{p_{p}}^{n}
,\mathbf{w}_{p,h}\cdot\mathbf{n}_{p}\rangle_{\Gamma}\nonumber\\
&\quad-c_{BIS}\langle\mathcal{M}_{f}(\Theta_{\mathbf{v}_{f}}^{n})
,\mathcal{M}_{p}(\mathbf{w}_{p,h})\rangle_{\Gamma}-c_{BJS}\langle\mathcal{M}_{f}(\Lambda_{\mathbf{v}_{f}}^{n})
,\mathcal{M}_{p}(\mathbf{w}_{p,h})\rangle_{\Gamma}\nonumber\\
&\quad+\rho_{p}\big(d_{t}\mathbf{v}_{p}(t_{n+1})
-\partial_{t}\mathbf{v}_{p}(t_{n+1}),\mathbf{w}_{p,h}\big)_{\Omega_{p}}
-\rho_{p}\big(d_{t}\Lambda_{\mathbf{v}_{p}}^{n+1}
,\mathbf{w}_{p,h}\big)_{\Omega_{p}}\nonumber\\
&\quad+c_{BJS}\langle\mathcal{M}_{p}(d_{t}\mathbf{u}_{p}(t_{n+1})
-\partial_{t}\mathbf{u}_{p}(t_{n+1}))
,\mathcal{M}_{p}(\mathbf{w}_{p,h})\rangle_{\Gamma}\nonumber\\
&\quad-c_{BJS}\langle\mathcal{M}_{p}(d_{t}\Lambda_{\mathbf{u}_{p}}^{n+1})
,\mathcal{M}_{p}(\mathbf{w}_{p,h})\rangle_{\Gamma},\nonumber\\
 &\frac{1}{\lambda_{p}}\big(\Theta_{\beta_{p}}^{n+1}
 ,\varphi_{p,h}\big)_{\Omega_{p}}
 +\big(\nabla\cdot\Theta_{\mathbf{u}_{p}}^{n+1}
 ,\varphi_{p,h}\big)_{\Omega_{p}}\label{6.17}\\
 &=\frac{\alpha}{\lambda_{p}}\big(\widetilde{\Theta}_{p_{p}}^{n+1}
+\widetilde{\Lambda}_{p_{p}}^{n+1}
,\varphi_{p,h}\big)_{\Omega_{p}}
-\frac{1}{\lambda_{p}}\big(\Lambda_{\beta_{p}}^{n+1}
,\varphi_{p,h}\big)_{\Omega_{p}},\nonumber\\
&\big(c_{0}+\frac{\alpha^{2}}{\lambda_{p}}\big)\big(d_{t}\widetilde{\Theta}_{p_{p}}^{n+1}
,\psi_{p,h}\big)_{\Omega_{p}}
-\frac{\alpha}{\lambda_{p}}\big(d_{t}\Theta_{\beta_{p}}^{n+1}
,\psi_{p,h}\big)_{\Omega_{p}}
\label{6.18}\\
&\quad+\mu_{f}\big(K^{-1}\nabla\widetilde{\Theta}_{p_{p}}^{n+1}
,\nabla\psi_{p,h}\big)_{\Omega_{p}}+L_{3}\langle\widetilde{\Theta}_{p_{p}}^{n+1}
-\widetilde{\Theta}_{p_{p}}^{n}
,\psi_{p,h}\rangle_{\Gamma}\nonumber\\
&=L_{3}\langle\widetilde{\Lambda}_{p_{p}}^{n}
-\widetilde{\Lambda}_{p_{p}}^{n+1}
,\psi_{p,h}\rangle_{\Gamma}
+\langle\Theta_{\mathbf{v}_{f}}^{n}\cdot\mathbf{n}_{f}
,\psi_{p,h}\rangle_{\Gamma}
+\langle\Lambda_{\mathbf{v}_{f}}^{n}\cdot\mathbf{n}_{f}
,\psi_{p,h}\rangle_{\Gamma}\nonumber\\
&\quad+\langle d_{t}\Lambda_{\mathbf{u}_{p}}^{n}\cdot\mathbf{n}_{p}
,\psi_{p,h}\rangle_{\Gamma}
+\langle(\partial_{t}\mathbf{u}_{p}(t_{n})
-d_{t}\mathbf{u}_{p}(t_{n}))\cdot\mathbf{n}_{p}
,\psi_{p,h}\rangle_{\Gamma}\nonumber\\
&\quad+\langle d_{t}\Theta_{\mathbf{u}_{p}}^{n}\cdot\mathbf{n}_{p}
,\psi_{p,h}\rangle_{\Gamma}+\big(c_{0}+\frac{\alpha^{2}}{\lambda_{p}}\big)\big(d_{t}p_{p}(t_{n+1})
-\partial_{t}p_{p}(t_{n+1}),\psi_{p,h}\big)_{\Omega_{p}}
\nonumber\\
&\quad+\frac{\alpha}{\lambda_{p}}\big(\partial_{t}\beta_{p}(t_{n+1})
-d_{t}\beta_{p}(t_{n+1}),\psi_{p,h}\big)_{\Omega_{p}}\nonumber\\
&\quad-\big(c_{0}+\frac{\alpha^{2}}{\lambda_{p}}\big)\big(d_{t}\widetilde{\Lambda}_{p_{p}}^{n+1}
,\psi_{p,h}\big)_{\Omega_{p}}
+\frac{\alpha}{\lambda_{p}}\big(\Lambda_{\beta_{p}}^{n+1}
,\psi_{p,h}\big)_{\Omega_{p}}.\nonumber
\end{align}
Taking $(\mathbf{w}_{f,h},q_{f,h}) = (\Theta_{\mathbf{v}_{f}}^{n+1},\Theta_{p_{f}}^{n+1})$ as the test functions in \eqref{6.13}-\eqref{6.14}, and $(\mathbf{z}_{p,h},\mathbf{w}_{p,h},\varphi_{p,h},\psi_{p,h})=(\rho_{p}d_{t}\Theta_{\mathbf{v}_{p}}^{n+1},d_{t}\Theta_{\mathbf{u}_{p}}^{n+1},\Theta_{\beta_{p}}^{n+1},\widetilde{\Theta}_{p_{p}}^{n+1})$ as the test functions in \eqref{6.15}-\eqref{6.18} after applying the discrete time difference operator $d_t$ to \eqref{6.17}, then adding the resulting equalities with \eqref{5.1}, and subsequently applying the summation operator $\Delta t \sum_{n=0}^{l}$, we deduce the desired equation \eqref{6.6}. This completes the proof. 
\end{proof}

To complete the proof of the main theorem, it is convenient to define the following  notation:
\begin{align*}
C_{1}(T)=&\|\partial_{t}\mathbf{v}_{f}\|_{L^{2}((0,T);H^{3}(\Omega_{f}))}^{2}+\|\partial_{t}p_{p}\|_{L^{2}((0,T);H^{3}(\Omega_{p}))}^{2}+\|\partial_{t}\beta_{p}\|_{L^{2}((0,T);H^{2}(\Omega_{p}))}^{2}\\
       &+\|\beta_{p}\|_{L^{2}((0,T);H^{2}(\Omega_{p}))}^{2}+\|\mathbf{v}_{f}\|_{L^{\infty}((0,T);H^{3}(\Omega_{f}))}^{2}+\|p_{p}\|_{L^{\infty}((0,T);H^{3}(\Omega_{p}))}^{2}\\
       &+\|\partial_{t}\mathbf{u}_{p}\|_{L^{\infty}((0,T);H^{3}(\Omega_{p}))}^{2}+\|\mathbf{v}_{p}\|_{L^{\infty}((0,T);H^{3}(\Omega_{p}))}^{2}+\|\partial_{t}\mathbf{v}_{p}\|_{L^{\infty}((0,T);H^{3}(\Omega_{p}))}^{2}\\
       &+\|\partial_{tt}^{2}\mathbf{u}_{p}\|_{H^{3}((0,T);H^{3}(\Omega_{p}))}^{2}+\|\partial_{tt}^{2}\mathbf{v}_{p}\|_{H^{3}((0,T);H^{3}(\Omega_{p}))}^{2},\\
       C_{2}(T)=&\|\partial_{tt}^{2}\mathbf{v}_{f}\|_{L^{2}((0,T);L^{2}(\Omega_{f}))}^{2}+\|\partial_{tt}^{2}\mathbf{u}_{p}\|_{L^{\infty}((0,T);L^{2}(\Omega_{p}))}^{2}+\|\partial_{tt}^{2}\mathbf{v}_{p}\|_{L^{\infty}((0,T);L^{2}(\Omega_{p}))}^{2}\\
&+\|\partial_{tt}^{2}p_{p}\|_{L^{2}((0,T);L^{2}(\Omega_{p}))}^{2}+\|\partial_{tt}^{2}\beta_{p}\|_{L^{2}((0,T);L^{2}(\Omega_{p}))}^{2}+\|\partial_{tt}^{2}\mathbf{u}_{p}\|_{L^{2}((0,T);L^{2}(\Gamma))}^{2}\\
&+\|\partial_{tt}^{2}\mathbf{u}_{p}\|_{L^{\infty}((0,T);L^{2}(\Gamma))}^{2}.
\end{align*}
We now establish the optimal error estimates in the following theorem.
\begin{theorem}\label{thm:final_error}
Let $(\mathbf{v}_{f}, p_{f}, \mathbf{v}_{p}, \mathbf{u}_{p}, \beta_{p}, p_{p})$ be the exact solution of \eqref{3.1}--\eqref{3.6} and $(\mathbf{v}_{f,h}^{n}, p_{f,h}^{n}, \mathbf{v}_{p,h}^{n}, \mathbf{u}_{p,h}^{n},\\ \beta_{p,h}^{n}, p_{p,h}^{n})$ be the numerical solution obtained from Algorithm \ref{al1}. Under sufficient regularity assumptions on the exact solution, the following error estimate holds for any $1 \le n \le N$:
\begin{align}
&\max_{1\leq n\leq N}\Big[\mu_{p}\|\varepsilon(\mathbf{u}_{p}(t_{n})-\mathbf{u}_{p,h}^{n})\|_{L^{2}(\Omega_{p})}
+\frac{1}{2\lambda}\|\beta_{p}(t_{n})-\beta_{p,h}^{n}\|_{L^{2}(\Omega_{p})}\Big]
\label{6.22}\\
&\quad+\Big[\mu_{f}\Delta t\sum_{n=0}^{N}\|\varepsilon(\mathbf{v}_{f}(t_{n})-\mathbf{v}_{f,h}^{n})\|_{L^{2}(\Omega_{f})}^{2}\Big]^{1/2}\nonumber\\
&\quad+\Big[\mu_{f}\Delta t\sum_{n=0}^{N}\|K^{-1/2}\nabla(p_{p}(t_{n}) - p_{p,h}^{n})\|_{L^{2}(\Omega_{p})}^{2}\Big]^{1/2}\nonumber\\
&\lesssim e^{\overline{C}T}\Big[(L_{1}+L_{2}+L_{3}\Delta t+C)\sqrt{C_{1}(T)}h^{2}+C_{2}(L_{2}+C)\sqrt{C_{2}(T)}\Delta t\Big],\nonumber
\end{align}
where the positive constant $\overline{C}$ depends on the Robin parameters $L_{1}, L_{2}, L_{3}$.
\end{theorem}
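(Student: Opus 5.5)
The plan is to start from the error equation \eqref{6.6} of the preceding lemma and bound each of the seven groups $\Phi_1,\dots,\Phi_7$ so that every $\Theta$-dependent factor is absorbed into the left-hand side or into a Gr\"onwall-type sum. The remaining factors should then be controlled by the projection estimates \eqref{6.3}, \eqref{6.5} and by Taylor remainders. The total error is recovered at the end through $e=\Lambda+\Theta$ and the triangle inequality, again using \eqref{6.3} and \eqref{6.5} for $\Lambda$. The initial terms $\mathcal{E}_\Theta^0$, $\mathcal{N}_\Theta^0$, $\mathcal{M}_\Theta^0$ vanish, or are $O(h^2)$, because the initial data in Algorithm \ref{al1} are taken as $\mathcal{Q}_h$ and $\mathcal{R}_h$ projections.

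\textbf{Projection terms.} In $\Phi_1$ and $\Phi_6$, write $d_t\Lambda^{n+1}=\Delta t^{-1}\int_{t_n}^{t_{n+1}}\partial_t\Lambda\,ds$. Then \eqref{6.3} and \eqref{6.5} give $\|d_t\Lambda^{n+1}\|^2\le Ch^4\Delta t^{-1}\int_{t_n}^{t_{n+1}}\|\partial_t(\cdot)\|_{H^3}^2$, and Young's inequality pairs these with $\|\Theta\|^2$ terms in $\mathcal{E}_\Theta$, which are handled by Gr\"onwall.

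For the $\lambda_p^{-1}$ terms involving $\Theta_{\beta_p}$, I would avoid constants blowing up as $\lambda_p\to\infty$. Following the stability proof, I would rewrite them through the combination $\alpha\widetilde\Theta_{p_p}-\Theta_{\beta_p}$, which $\mathcal{E}_\Theta$ controls with weight $1/(2\lambda_p)$. Separately, $\|\Theta_{\beta_p}\|$ itself is bounded via the discrete inf-sup condition for $\mathbf P_2$--$P_1$ applied to \eqref{6.16}. This is what makes the estimate locking-free.

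The interface projection terms in $\Phi_2$ and $\Phi_7$ use the trace inequality. Traces of $\Theta$ are bounded by $\|\varepsilon(\Theta_{\mathbf v_f})\|$ (Korn) or $\|K^{1/2}\nabla\widetilde\Theta_{p_p}\|$, absorbed into $\mathcal J_\Theta$, and traces of $\Lambda$ are $O(h^2)$. Differences such as $L_1(\Lambda^n-\Lambda^{n+1})$ produce the factors $L_1$, $L_2$, and $L_3\Delta t$ in the theorem.

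\textbf{Time-discretization terms.} In $\Phi_4$ and $\Phi_5$, the Taylor remainder $\|d_t w(t_{n+1})-\partial_tw(t_{n+1})\|^2\le C\Delta t\int_{t_n}^{t_{n+1}}\|\partial_{tt}w\|^2$ gives the $\Delta t\sqrt{C_2(T)}$ contribution. The terms tested against $d_t\Theta_{\mathbf v_p}$ or $d_t\Theta_{\mathbf u_p}$ I would handle by summation by parts in time rather than by Young's inequality. That moves $d_t$ onto the consistency or projection error, which explains the $\partial_{tt}$ and $L^\infty$-in-time norms in $C_1$ and $C_2$. The interface traces of $d_t\Theta_{\mathbf u_p}$ are absorbed by $\mathcal N_\Theta$, $\mathcal D_\Theta$ and $\mathcal L_\Theta$ up to a Gr\"onwall term.

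\textbf{Lagged cross terms.} The main obstacle is $\Phi_3$, the explicitly lagged interface coupling. I would treat it exactly as in \eqref{5.16}--\eqref{5.17}: add and subtract so that the terms become $\langle\Theta^n_{\mathbf v_f}\cdot\mathbf n_f,\widetilde\Theta^{n+1}_{p_p}-\widetilde\Theta^n_{p_p}\rangle-\langle\widetilde\Theta^n_{p_p},(\Theta^{n+1}_{\mathbf v_f}-\Theta^n_{\mathbf v_f})\cdot\mathbf n_f\rangle$, and similarly for $d_t\Theta_{\mathbf u_p}$. The increments are absorbed into $\mathcal D_\Theta$, and the lagged traces are controlled by trace, Korn and Poincar\'e inequalities as in \eqref{5.20}. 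This requires the same restriction $\widehat C/L_3\le\mu_p\Delta t$ on $L_3$ and yields an exponential factor $e^{\overline CT}$ depending on $L_1$, $L_2$, $L_3$.

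Collecting all bounds gives an inequality of the form $\mathcal E_\Theta^{l+1}+\Delta t\sum\widetilde{\mathcal J}_\Theta\le C\Delta t\sum_{n\le l}\mathcal E_\Theta^n+(\text{data})(h^4+\Delta t^2)$. The discrete Gr\"onwall inequality then closes the estimate, and taking square roots gives \eqref{6.22}.
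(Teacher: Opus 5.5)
Your overall architecture matches the paper's: start from \eqref{6.6}, combine the $1/\lambda_p$ terms of $\Phi_1$ into $\alpha\widetilde\Theta_{p_p}-\Theta_{\beta_p}$, split the lagged terms of $\Phi_3$ into increments as in \eqref{5.16}--\eqref{5.17} under $\check C/L_3\le\mu_p\Delta t$, sum by parts (using $\Theta^0=0$) in the groups tested against $d_t\Theta$ such as $\Phi_6$, $\Phi_7$, close with the discrete Gr\"onwall inequality, and finish with $e=\Lambda+\Theta$.

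The one step that would not go through is your separate inf-sup bound for $\|\Theta_{\beta_p}\|$ from \eqref{6.16}. That equation contains $\rho_p(d_t\Theta_{\mathbf v_p}^{n+1},\mathbf w_{p,h})$ and the Robin terms in $d_t\Theta_{\mathbf u_p}$. However, \eqref{6.6} controls these only in $\Delta t$-weighted form: through $\Delta t^2\sum_n\|d_t\Theta_{\mathbf v_p}^{n+1}\|^2$, $\Delta t\sum_n\mathcal D_\Theta^{n+1}$ and $\Delta t\,\mathcal M_\Theta^{l+1}$. Read pointwise in time, this gives only $\|d_t\Theta_{\mathbf v_p}^{n+1}\|\lesssim (h^2+\Delta t)/\Delta t$. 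So the inf-sup argument yields no convergent bound unless you add a second energy estimate for the time-differenced error equations, which needs more regularity and compatible initial data.

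Fortunately \eqref{6.22} does not need it. The $\beta$-error enters there only with the weight $1/(2\lambda)$, and the paper takes it from $\mathcal E_\Theta$ in \eqref{6.32}, whose $\beta$-part is $\frac{1}{2\lambda_p}\|\alpha\widetilde\Theta_{p_p}-\Theta_{\beta_p}\|^2$. The $\lambda_p$-robustness of the estimate comes from the combination argument in $\Phi_1$, which keeps every constant independent of $\lambda_p$. Drop the inf-sup step rather than attributing locking-freeness to it.

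Where you genuinely depart from the paper is in the Taylor remainders tested against $d_t\Theta_{\mathbf v_p}$ and $d_t\Theta_{\mathbf u_p}$ in $\Phi_4$, $\Phi_5$. The paper uses Young's inequality there and absorbs these factors into $\frac{\rho_p\Delta t}{2}\|d_t\Theta_{\mathbf v_p}^{n+1}\|^2$ and $\frac{\mu_p\Delta t}{4}\|d_t\varepsilon(\Theta_{\mathbf u_p}^{n+1})\|^2$ (see \eqref{6.26}, \eqref{6.27}), so only second time derivatives appear. Your summation by parts costs one more time derivative, since the increments of $d_tw-\partial_tw$ involve $\partial_t^3w$.

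It is nonetheless the more careful choice. Absorbing into the $\Delta t$-weighted dissipation puts a factor $\Delta t^{-1}$ in front of $\|d_tw(t_{n+1})-\partial_tw(t_{n+1})\|^2=O(\Delta t^2)$. After multiplying by $\Delta t$ and summing over $N=T/\Delta t$ steps, this contributes $O(\Delta t)$ rather than $O(\Delta t^2)$ to the squared error. Moving $d_t$ onto the remainder, whose increments are $O(\Delta t^2)$, preserves the first-order rate in time claimed in \eqref{6.22}.
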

\begin{proof}
Note that the initial errors vanish, i.e., $\mathcal{E}_{\Theta}^{0} = \mathcal{N}_{\Theta}^{0} = \mathcal{M}_{\Theta}^{0} = 0$, due to the choice of the initial projections. We then proceed to estimate the terms $\Phi_{i}$ individually. For $\Phi_{1}$, by applying the Cauchy–Schwarz, Poincar\'e, Korn, and Young inequalities, we obtain:
\begin{align}
\Phi_{1}\leq
& C\big(\|d_{t}\Lambda_{\mathbf{v}_{f}}^{n+1}\|_{L^{2}(\Omega_{f})}^{2}
+\|d_{t}\widetilde{\Lambda}_{p_{p}}^{n+1}\|_{L^{2}(\Omega_{p})}^{2}
+\|d_{t}\Lambda_{\beta_{p}}^{n+1}\|_{L^{2}(\Omega_{p})}^{2}
\label{6.23}\\
&+\|\Lambda_{\beta_{p}}^{n+1}\|_{L^{2}(\Omega_{p})}^{2}\big)+\frac{1}{4\lambda_{p}}\|\alpha\widetilde{\Theta}_{p_{p}}^{n+1}-\Theta_{\beta_{p}}^{n+1}\|_{L^{2}(\Omega_{p})}^{2}\nonumber\\
&+\frac{\mu_{f}}{4}\|\varepsilon(\Theta_{\mathbf{v}_{f}}^{n+1})\|_{L^{2}(\Omega_{f})}^{2}
+\frac{c_{0}}{4}\|\widetilde{\Theta}_{p_{p}}^{n+1}\|_{L^{2}(\Omega_{p})}^{2}.\nonumber
\end{align}
By virtue of the Cauchy–Schwarz, trace, Poincar\'e, Korn, and Young inequalities, the term $\Phi_{2}$ can be bounded as follows:
\begin{align}
\Phi_{2}\leq
&C\big((L_{1}^{2}+1)\|\Lambda_{\mathbf{v}_{f}}^{n}\|_{H^{1}(\Omega_{f})}^{2}
+(L_{1}^{2}+1)\|\Lambda_{\mathbf{v}_{f}}^{n+1}\|_{H^{1}(\Omega_{f})}^{2}
+\|\widetilde{\Lambda}_{p_{p}}^{n}\|_{H^{1}(\Omega_{p})}^{2}\label{6.24}\\
&+L_{3}^{2}\Delta t^{2}\|d_{t}\widetilde{\Lambda}_{p_{p}}^{n+1}\|_{H^{1}(\Omega_{p})}^{2}
+\|d_{t}\Lambda_{\mathbf{u}_{p}}^{n}\|_{H^{1}(\Omega_{p})}^{2}\big)
+\frac{\mu_{f}}{4}\|\varepsilon(\Theta_{\mathbf{v}_{f}}^{n+1})\|_{L^{2}(\Omega_{f})}^{2}
\nonumber\\
&+\frac{1}{4\mu_{f}}\|K^{\frac{1}{2}}\nabla\widetilde{\Theta}_{p_{p}}^{n+1}\|_{L^{2}(\Omega_{p})}^{2}.\nonumber
\end{align}
For the term:
\begin{align}
\Phi_{3}=&-\langle \widetilde{\Theta}_{p_{p}}^{n},(\Theta_{\mathbf{v}_{f}}^{n+1}-\Theta_{\mathbf{v}_{f}}^{n})\cdot\mathbf{n}_{f}\rangle_{\Gamma}-\langle\widetilde{\Theta}_{p_{p}}^{n},(d_{t}\Theta_{\mathbf{u}_{p}}^{n+1}-d_{t}\Theta_{\mathbf{u}_{p}}^{n})\cdot\mathbf{n}_{p}\rangle_{\Gamma}\nonumber\\
&+\langle\Theta_{\mathbf{v}_{f}}^{n}\cdot\mathbf{n}_{f},\widetilde{\Theta}_{p_{p}}^{n+1}-\widetilde{\Theta}_{p_{p}}^{n}\rangle_{\Gamma}+\langle d_{t}\Theta_{\mathbf{u}_{p}}^{n}\cdot\mathbf{n}_{p},\widetilde{\Theta}_{p_{p}}^{n+1}-\widetilde{\Theta}_{p_{p}}^{n}\rangle_{\Gamma},\nonumber
\end{align}
a combination of the Cauchy-Schwarz, trace, Korn, and Young inequalities leads to the estimate:
\begin{align}
\Phi_{3}
\leq&\frac{\mu_{f}}{4}\|\varepsilon(\Theta_{\mathbf{v}_{f}}^{n})\|_{L^{2}(\Omega_{f})}^{2}
+\frac{1}{4\mu_{f}}\|K^{\frac{1}{2}}\nabla \widetilde{\Theta}_{p_{p}}^{n}\|_{L^{2}(\Omega_{p})}^{2}
+\frac{\check{C}}{L_{3}}\|d_{t}\varepsilon(\Theta_{\mathbf{u}_{p}}^{n})\|_{L^{2}(\Omega_{p})}^{2}\label{6.25}\\
&+C\Big[\frac{1}{L_{3}^{2}\mu_{f}}\|\Theta_{\mathbf{v}_{f}}^{n}\|_{L^{2}(\Omega_{f})}^{2}+\frac{\mu_{f}}{4K_{2}}(\frac{1}{L_{1}^{2}}+\frac{1}{L_{2}^{2}})\|\widetilde{\Theta}_{p_{p}}^{n}\|_{L^{2}(\Omega_{p})}^{2}\Big]\nonumber\\
&+\frac{L_{1}}{2}\|(\Theta_{\mathbf{v}_{f}}^{n+1}
-\Theta_{\mathbf{v}_{f}}^{n})\cdot\mathbf{n}_{f}\|_{L^{2}(\Gamma)}^{2}+\frac{L_{2}}{2}\|(d_{t}\Theta_{\mathbf{u}_{p}}^{n+1}
-d_{t}\Theta_{\mathbf{u}_{p}}^{n})\cdot\mathbf{n}_{p}\|_{L^{2}(\Gamma)}^{2}\nonumber\\
&+\frac{L_{3}}{2}\|\widetilde{\Theta}_{p_{p}}^{n+1}
-\widetilde{\Theta}_{p_{p}}^{n}\|_{L^{2}(\Gamma)}^{2}.\nonumber
\end{align}
For $\Phi_{4}$, by employing the Cauchy–Schwarz inequality, the following temporal approximation properties:
\begin{align}
&\|d_{t}\mathcal{C}_{*}(t_{n+1})-\partial_{t}\mathcal{C}_{*}(t_{n+1})\|_{L^{2}(\Omega_{*})}^{2}\leq\frac{\Delta t}{3}\|\partial_{tt}^{2}\mathcal{C}_{*}\|_{L^{2}((t_{n},t_{n+1});L^{2}(\Omega_{*}))}^{2},\nonumber\\
&\|d_{t}\mathcal{C}_{*}(t_{n+1})-\partial_{t}\mathcal{C}_{*}(t_{n+1})\|_{L^{2}(\Omega_{*})}\leq\frac{\Delta t}{2}\|\partial_{tt}^{2}\mathcal{C}_{*}\|_{L^{\infty}((t_{n},t_{n+1});L^{2}(\Omega_{*}))},\nonumber
\end{align}
where $\mathcal{C}_{*}=\mathbf{v}_{f},~\mathbf{u}_{p},~\mathbf{v}_{p},~p_{p},~\beta_{p}$, together with the Poincar\'e, Korn, and Young inequalities, we arrive at:
\begin{align}
\Phi_{4}
&\leq C\Delta t\big(\|\partial_{tt}^{2}\mathbf{v}_{f}\|_{L^{2}((t_{n},t_{n+1});L^{2}(\Omega_{f}))}^{2}
+\|\partial_{tt}^{2}\mathbf{u}_{p}\|_{L^{\infty}((t_{n},t_{n+1});L^{2}(\Omega_{p}))}^{2}\label{6.26}\\
&+\|\partial_{tt}^{2}\mathbf{v}_{p}\|_{L^{\infty}((t_{n},t_{n+1});L^{2}(\Omega_{p}))}^{2}
+\|\partial_{tt}^{2}p_{p}\|_{L^{2}((t_{n},t_{n+1});L^{2}(\Omega_{p}))}^{2}\nonumber\\
&
+\|\partial_{tt}^{2}\beta_{p}\|_{L^{2}((t_{n},t_{n+1});L^{2}(\Omega_{p}))}^{2}\big)+\frac{\mu_{f}}{8}\|K^{-\frac{1}{2}}\nabla\widetilde{\Theta}_{p_{p}}^{n+1}\|_{L^{2}(\Omega_{p})}^{2}\nonumber\\
&+\frac{\rho_{p}\Delta t}{2}\|d_{t}\Theta_{\mathbf{v}_{p}}^{n+1}\|_{L^{2}(\Omega_{p})}^{2}+\frac{\mu_{f}}{4}\|\varepsilon(\Theta_{\mathbf{v}_{f}}^{n+1})\|_{L^{2}(\Omega_{f})}^{2}
+\frac{\mu_{p}\Delta t}{4}\|d_{t}\varepsilon(\Theta_{\mathbf{u}_{p}}^{n+1})\|_{L^{2}(\Omega_{f})}^{2}.\nonumber
\end{align}
Similarly, we bound $\Phi_{5}$ as:
\begin{align}
\sum_{n=0}^{l}\Phi_{5}\leq& C\Delta t \big(\|\partial_{tt}^{2}\mathbf{u}_{p}\|_{L^{2}((0,T);L^{2}(\Gamma))}^{2}+(L_{2}^{2}+1)\|\partial_{tt}^{2}\mathbf{u}_{p}\|_{L^{\infty}((0,T);L^{2}(\Gamma))}^{2}\big)\label{6.27}\\
&+\sum_{n=0}^{l}\big(\frac{\mu_{f}}{4}\|\varepsilon(\Theta_{\mathbf{v}_{f}}^{n+1})\|_{L^{2}(\Omega_{f})}^{2}+\frac{\mu_{p}\Delta t}{4}\|d_{t}\varepsilon(\Theta_{\mathbf{u}_{p}}^{n+1})\|_{L^{2}(\Omega_{f})}^{2}\nonumber\\
&+\frac{\mu_{f}}{8}\|K^{-\frac{1}{2}}\nabla\widetilde{\Theta}_{p_{p}}^{n+1}\|_{L^{2}(\Omega_{p})}^{2}\big).\nonumber
\end{align}
For $\Phi_{6}$, by applying the summation by parts formula and utilizing the fact that the initial errors vanish, i.e., $\Theta_{\mathbf{v}_{p}}^{0} = \Theta_{\mathbf{u}_{p}}^{0} = \mathbf{0}$, we obtain:
\begin{align}
\sum_{n=0}^{l}\Phi_{6}=&\frac{1}{\Delta t}\big(-\rho_{p}\big(\Lambda_{\mathbf{v}_{p}}^{l+1},\Theta_{\mathbf{v}_{p}}^{l+1}\big)_{\Omega_{p}}+\rho_{p}\big(d_{t}\Lambda_{\mathbf{u}_{p}}^{l+1},\Theta_{\mathbf{v}_{p}}^{l+1}\big)_{\Omega_{p}}\label{6.28}\\
&-\rho_{p}\big(d_{t}\Lambda_{\mathbf{v}_{p}}^{l+1},\Theta_{\mathbf{u}_{p}}^{l+1}\big)_{\Omega_{p}}\big)+\sum_{n=1}^{l}\big(\rho_{p}\big(d_{t}\Lambda_{\mathbf{v}_{p}}^{n+1},\Theta_{\mathbf{v}_{p}}^{n}\big)_{\Omega_{p}}\nonumber\\
&-\rho_{p}\big(d_{t}^{2}\Lambda_{\mathbf{u}_{p}}^{n+1},\Theta_{\mathbf{v}_{p}}^{n}\big)_{\Omega_{p}}+\rho_{p}\big(d_{t}^{2}\Lambda_{\mathbf{v}_{p}}^{n+1},\Theta_{\mathbf{u}_{p}}^{n}\big)_{\Omega_{p}}\big).\nonumber
\end{align}
Applying the Cauchy-Schwarz,  Poincar\'e, Korn and Young inequalities for \eqref{6.28}, we arrive at:
\begin{align}
\sum_{n=0}^{l}\Phi_{6}\leq& \frac{C}{\Delta t}\Big[\|\Lambda_{\mathbf{v}_{p}}^{l+1}\|_{L^{2}(\Omega_{p})}^{2}+\|d_{t}\Lambda_{\mathbf{u}_{p}}^{l+1}\|_{L^{2}(\Omega_{p})}^{2}+\|d_{t}\Lambda_{\mathbf{v}_{p}}^{l+1}\|_{L^{2}(\Omega_{p})}^{2}\label{6.29}\\
&+\Delta t\sum_{n=1}^{l}\big(\|d_{t}\Lambda_{\mathbf{v}_{p}}^{n+1}\|_{L^{2}(\Omega_{p})}^{2}+\|d_{t}^{2}\Lambda_{\mathbf{u}_{p}}^{n+1}\|_{L^{2}(\Omega_{p})}^{2}+\|d_{t}^{2}\Lambda_{\mathbf{v}_{p}}^{n+1}\|_{L^{2}(\Omega_{p})}^{2}\big)\Big]\nonumber\\
&+\frac{1}{\Delta t}\Big[\frac{\rho_{p}}{4}\|\Theta_{\mathbf{v}_{p}}^{l+1}\|_{L^{2}(\Omega_{p})}^{2}+\frac{\mu_{p}}{4}\|\varepsilon(\Theta_{\mathbf{u}_{p}}^{l+1})\|_{L^{2}(\Omega_{p})}^{2}\nonumber\\
&+\Delta t\sum_{n=1}^{l}\big(\frac{\rho_{p}}{4}\|\Theta_{\mathbf{v}_{p}}^{n+1}\|_{L^{2}(\Omega_{p})}^{2}+\frac{\mu_{p}}{4}\|\varepsilon(\Theta_{\mathbf{u}_{p}}^{n+1})\|_{L^{2}(\Omega_{p})}^{2}\big)\Big].\nonumber
\end{align}
Similar to $\Phi_{6}$, $\Phi_{7}$ can be bounded: 
\begin{align}
   \sum_{n=0}^{l}\Phi_{7}\leq&\frac{C}{\Delta t}\Big[L_{2}^{2}\|d_{t}\Lambda_{\mathbf{u}_{p}}^{l}\|_{H^{1}(\Omega_{p})}^{2}+(L_{2}^{2}+1)\|d_{t}\Lambda_{\mathbf{u}_{p}}^{l+1}\|_{H^{1}(\Omega_{p})}^{2}+\|\widetilde{\Lambda}_{p_{p}}^{l}\|_{H^{1}(\Omega_{p})}^{2}\label{6.30}\\
   &+\|\Lambda_{\mathbf{v}_{f}}^{l}\|_{H^{1}(\Omega_{f})}^{2}+\Delta t\sum_{n=1}^{l}\big((L_{2}^{2}+1)\|d_{t}^{2}\Lambda_{\mathbf{u}_{p}}^{n+1}\|_{H^{1}(\Omega_{p})}^{2}\nonumber\\
   &+\|d_{t}\widetilde{\Lambda}_{p_{p}}^{n}\|_{H^{1}(\Omega_{p})}^{2}+\|d_{t}\Lambda_{\mathbf{v}_{f}}^{n}\|_{H^{1}(\Omega_{f})}^{2}\big)\Big]+\frac{1}{\Delta t}\big(\frac{\mu_{p}}{4}\|\varepsilon(\Theta_{\mathbf{u}_{p}}^{l+1})\|_{L^{2}(\Omega_{p})}^{2}\nonumber\\
   &+\frac{\mu_{p}\Delta t}{4}\sum_{n=1}^{l}\|\varepsilon(\Theta_{\mathbf{u}_{p}}^{n+1})\|_{L^{2}(\Omega_{p})}^{2}\big).\nonumber
\end{align}
Substituting \eqref{6.23}-\eqref{6.27} and \eqref{6.29}-\eqref{6.30} into \eqref{6.6}, and applying the Poincar\'e inequality along with the approximation properties \eqref{6.3} and \eqref{6.5}, we finally arrive at:
\begin{align}
       &\frac{1}{2}\mathcal{E}_{\Theta}^{l+1}+\Delta t(\mathcal{N}_{\Theta}^{l+1}+\mathcal{M}_{\Theta}^{l+1})+\Delta t\sum_{n=0}^{l}\Big[\frac{\mu_{f}}{2}\big(\|\varepsilon(\Theta_{\mathbf{v}_{f}}^{n+1})\|_{L^{2}(\Omega_{f})}^{2}\label{6.31}\\
       &\quad+\|K^{-\frac{1}{2}}\nabla\widetilde{\Theta}_{p_{p}}^{n+1}\|_{L^{2}(\Omega_{p})}^{2}\big)+\mathcal{L}_{\Theta}^{n+1}\Big]\nonumber\\
       &\leq \frac{\overline{C}\Delta t}{2}\sum_{n=0}^{l}\mathcal{E}_{\Theta}^{n+1}+(L_{1}^{2}+L_{2}^{2}+L_{3}^{2}\Delta t^{2}+C)C_{1}(T)h^{4}\nonumber\\
       &\quad+(L_{2}^{2}+C)C_{2}(T)\Delta t^{2}.\nonumber
    \end{align}
    Note that by choosing the Robin parameter $L_3$ such that $\frac{\check{C}}{L_3} \leq \mu_{p}\Delta t$, and then applying the discrete Grönwall inequality to \eqref{6.31}, we obtain the following estimate after rearranging the terms:
    \begin{align}
    &\mathcal{E}_{\Theta}^{l+1}+\mu_{f}\Delta t\sum_{n=0}^{l}\big(\|\varepsilon(\Theta_{\mathbf{v}_{f}}^{n+1})\|_{L^{2}(\Omega_{f})}^{2}+\|K^{-\frac{1}{2}}\nabla\widetilde{\Theta}_{p_{p}}^{n+1}\|_{L^{2}(\Omega_{p})}^{2}\big)\label{6.32}\\
    &\leq e^{\overline{C}T}\Big[(L_{1}^{2}+L_{2}^{2}+L_{3}^{2}\Delta t^{2}+C)C_{1}(T)h^{4}+(L_{2}^{2}+C)C_{2}(T)\Delta t^{2}\Big].\nonumber
    \end{align}
Finally, applying the triangle inequality to the split errors $e_{\mathcal{A}_{*}}^{n} = \Lambda_{\mathcal{A}_{*}}^{n} + \Theta_{\mathcal{A}_{*}}^{n}$ and $e_{\mathcal{B}_{p}}^{n} = \widetilde{\Lambda}_{\mathcal{B}_{p}}^{n} + \widetilde{\Theta}_{\mathcal{B}_{p}}^{n}$, and combining \eqref{6.3}, \eqref{6.5} with the estimate \eqref{6.32}, we conclude that \eqref{6.22} holds. The proof is thus complete.
\end{proof}
\begin{remark}
The error estimate in Theorem \ref{thm:final_error} further demonstrates the parameters robustness of the proposed decoupled scheme. Specifically, the convergence of the displacement $\mathbf{u}_{p}$ is independent of the Lam\'e parameter $\lambda_{p}$, and similarly, the convergence of the pore pressure $p_{p}$ is not affected by the specific value of storage coefficient $c_{0}$. This robustness provides a theoretical explanation for the Algorithm \ref{al1}'s ability to overcome the locking phenomenon.
\end{remark}
\section{Numerical tests}
This section presents several numerical examples to demonstrate the capability of the proposed method in overcoming the locking phenomenon and achieving optimal convergence rates. Furthermore, the robustness and accuracy of the proposed scheme are demonstrated through a classical FPSI benchmark problem. Specifically, we investigate the impact of various Robin parameters on the simulation results and perform a detailed comparison with the results obtained from the monolithic method. Spatial discretization is carried out via the finite element method, employing $\mathbf{P}_{2}-P_{1}$ elements for the fluid variables and $\mathbf{P}_{2}-\mathbf{P}_{2}-P_{1}-P_{2}$ elements for the poroelastic variables. All simulations are implemented using the FEniCS platform \cite{alnaes2015fenics,alnaes2014unified}.

\subsection{Robustness against Poisson-Locking}
This example is designed to verify the locking-free property of the proposed scheme in the nearly incompressible limit, where the Lam\'{e} parameter $\lambda_{p}$ takes significantly large values \cite{yi2017study}.  We consider a couple system \eqref{2.1}-\eqref{2.16} where the exact solutions are constructed such that the interface conditions are satisfied regardless of the magnitude  of $\lambda_{p}$. The computational domain $\Omega$ is partitioned into a fluid domain $\Omega_{f} = [0, 1]\times [0, 1]$ and a poroelastic domain $\Omega_{p} = [0, 1] \times [-1, 0]$, separated by a common interface $\Gamma = \{(x, y)| 0\leq x \leq 1, y = 0 \}$. For simplicity, all model parameters except for $\lambda_{p}$ are set to unity: $\rho_{f} = \rho_{p} = \mu_{f} = \mu_{p} = \alpha = c_{0},~K = 1.0\mathbf{I}$.
The forcing term $\mathbf{f}_{f},~\phi_{f},~ \mathbf{f}_{p}$ and $\phi_{p}$ are derived directly from the exact solutions. Notably, $\phi_{f} \neq 0$ is maintained as the prescribed fluid velocity $\mathbf{v}_{f}$ is not divergence-free. The exact solutions are given by:
\begin{align*}
    &\mathbf{v}_{f} = \left[ 
    \begin{aligned}	
    e^{t}\cos(2\pi x)\sin(2\pi y) \\
    e^{t}\big{(}\frac{\cos(y)}{\lambda + 1} - \cos(2\pi y)\sin(2\pi x)\big{)}
    \end{aligned} \right], \quad p_{f} = e^{t}\cos(\pi x)\sin(\pi y) + \frac{\lambda e^{t}\sin(y)}{\lambda + 1} \\
    &\mathbf{u}_{p} = \left[ 
    \begin{aligned}
    e^{t}\cos(2\pi x)\sin(2\pi y) \\
    e^{t}\big{(}\frac{\cos(y)}{\lambda + 1} - \cos(2\pi y)\sin(2\pi x)\big{)}
    \end{aligned} \right], \quad p_{p} = e^{t}\cos(\pi x)\sin(\pi y).
\end{align*}
Dirichlet boundary conditions are imposed on all external boundaries, and the simulation is integrated until the final time $t = 1.0 s$. To evaluate the convergence rate, we define the following error norms for the primary variables:
\begin{align*}
&\mathbf{e}_{v_{f}, H^{1}} = ||\varepsilon(\mathbf{v}_{f} - \mathbf{v}_{f, h})||_{\Omega_{f}}, ~
e_{p_{f}, L^{2}} = ||p_{f} - p_{f, h}||_{\Omega_{f}},\\
&\mathbf{e}_{eng} = 2\mu_{p}||\varepsilon(\mathbf{u}_{p} - \mathbf{u}_{p,h})||_{\Omega_{p}} + \lambda_{p}||\nabla\cdot(\mathbf{u}_{p}  - \mathbf{u}_{p,h}) ||_{\Omega_{p}}, \\
&\mathbf{e}_{u_{p}, H^{1}} = ||\varepsilon(\mathbf{u}_{p} - \mathbf{u}_{p, h})||_{\Omega_{p}},~ 
e_{\beta_{p}, L^{2}} = ||\beta_{p} - \beta_{p, h}||_{\Omega_{p}},~
e_{p_{p}, H^{1}} = ||\nabla(p_{p} - p_{p, h})||_{\Omega_{p}}.
\end{align*}
To maintain the balance between spatial and temporal discretization errors, the time step and mesh size are refined simultaneously to satisfy $\Delta =O(h^{2})$.
The corresponding convergence rate is reported in Figure \ref{fig:ex2}.
To assess the locking-free performance, we complete simulations with $\lambda_{p} = 1.0$ and a large value of $\lambda_{p} = 1.0\times 10^{10}$.
\begin{figure}[ht]
    \centering
    \begin{subfigure}[b]{0.45\textwidth}
        \centering
        \includegraphics[width=\textwidth]{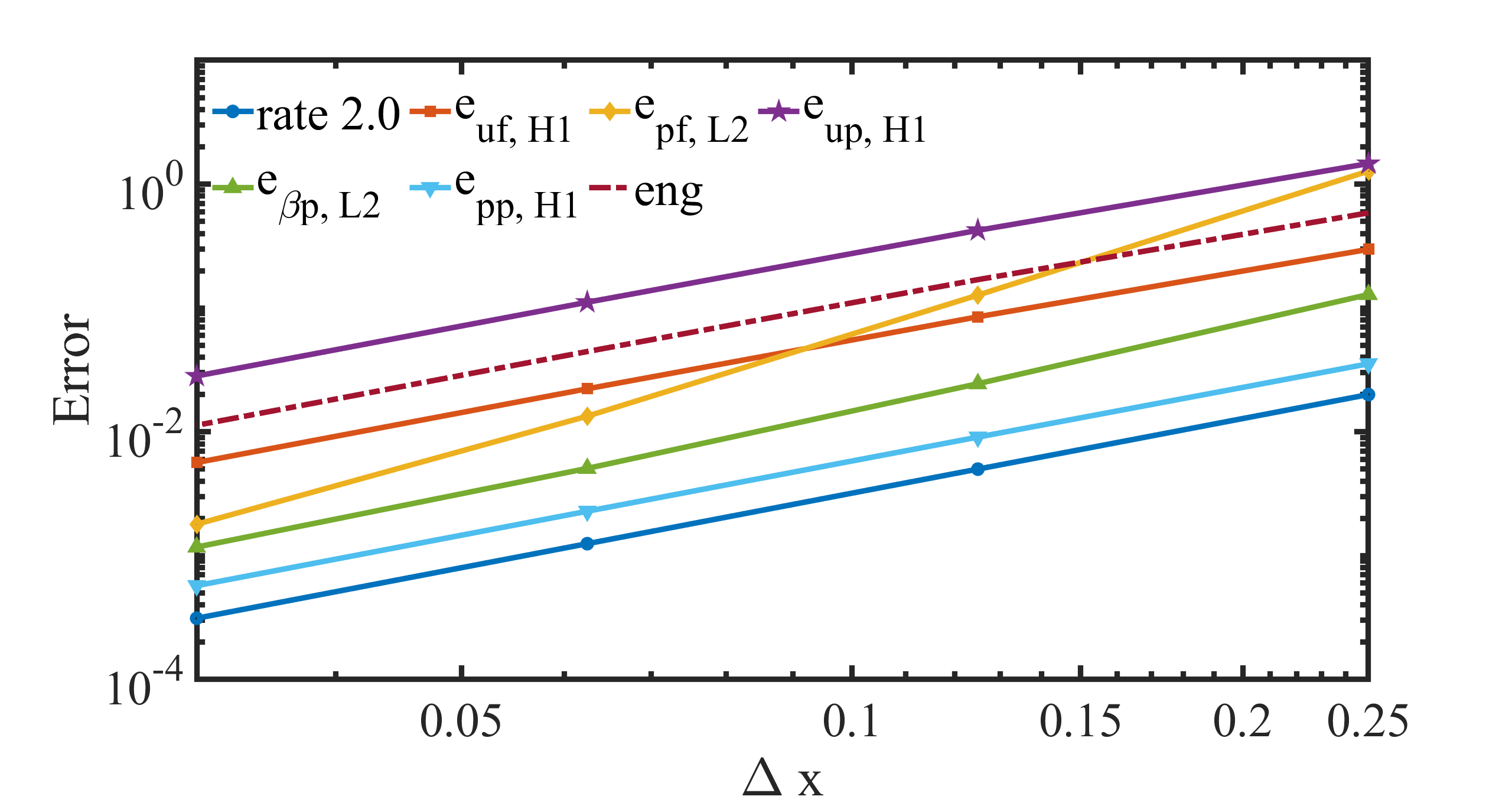}
        \caption{Convergence rates for $\lambda_{p} = 1.0$}
        \label{ex2_lam1}
    \end{subfigure}
    \hfill
    \begin{subfigure}[b]{0.45\textwidth}
        \centering
        \includegraphics[width=\textwidth]{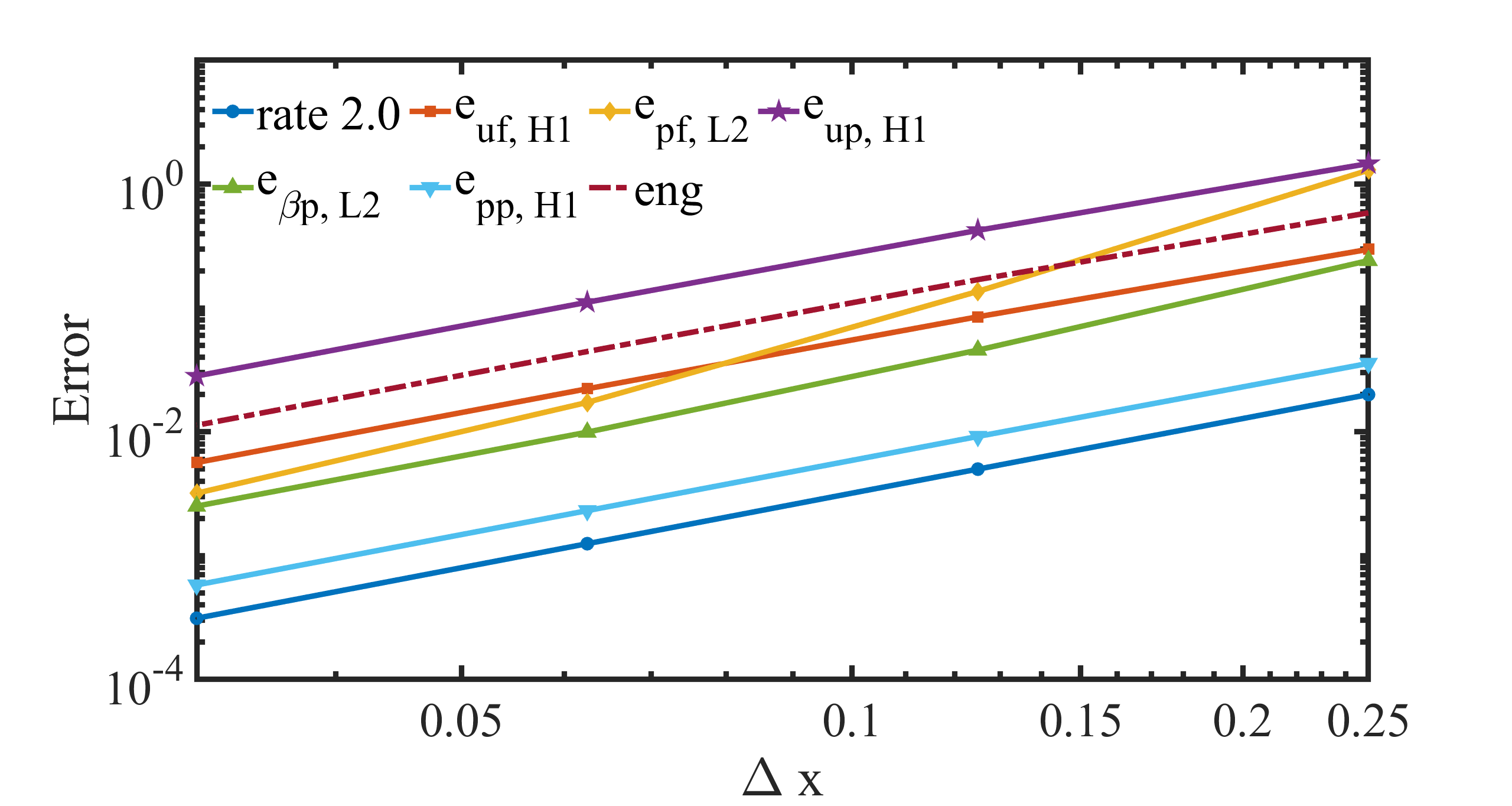}
        \caption{Convergence rates for $\lambda_{p} = 10^{10}$}
        \label{ex2_lam1e10}
    \end{subfigure}
    \caption{Convergence performance of subsection 6.1 with varying Lam\'{e} parameters. The results demonstrate that the optimal convergence rates are preserved regardless of the magnitude of $\lambda_p$, confirming the locking-free property of the proposed algorithm.}
    \label{fig:ex2}
\end{figure}
\subsection{Cantilever Bracket problem coupled with Stokes flow}
In this section, we investigate a cantilever bracket problem coupled with a Stokes flow to demonstrate that the proposed scheme is immune to spurious pressure oscillations in the poroelastic domain. The geometric configuration consists of a fluid domain $\Omega_{f} = [0, 1] \times [0, 1]$ and a poroelastic domain $\Omega_{p} = [0, 1] \times [-1, 0]$, separated by a horizontal interface  $\Gamma = \{(x, y) \mid 0 \le x \le 1, y = 0\}$.

For the fluid boundary conditions, no-slip condition is prescribed for the fluid velocity $\mathbf{v}_{f}$ on $\Gamma_{f}^{D} = \partial\Omega_{f}\setminus(\Gamma\cup\Gamma_{f}^{N})$, while a traction-free condition is imposed on $\Gamma_{f}^{N} = \{(x, y)|0\leq x \leq 1, y = 1\}$. Regarding the poroelastic region, The bottom boundary $\Gamma_{p}^{D}$ is clamped ($\mathbf{u}_{p} = 0$), while a constant horizontal traction $\mathbf{\sigma}_{p}\cdot\mathbf{n}_{p} = (-1, 0)^{T}$ is applied to the left boundary $\Gamma_{p}^{N*} = \{(x, y)|x = 0, -1\leq y \leq 0\}$. Traction-free conditions are prescribed on the remaining external boundaries.

To capture the transient behavior accurately, we set the time step $\Delta t = 10^{-5}$ and mesh size $h = 0.05$. The model parameters are chosen following \cite{phillips2009overcoming} as: 
$$\rho_{f} = 1.0, ~\mu_{f} = 10^{-2}, ~\rho_{p} = 10^{-10}, ~E = 10^{5}, ~\nu = 0.4, ~c_{0} = 0.0, ~\alpha = 0.93, ~K = 10^{-7}\mathbf{I}.$$
These settings are specifically selected to investigate the solver's performance for pressure oscillation.
\begin{figure}[htb]
    \centering 
    \includegraphics[width=0.8\textwidth]{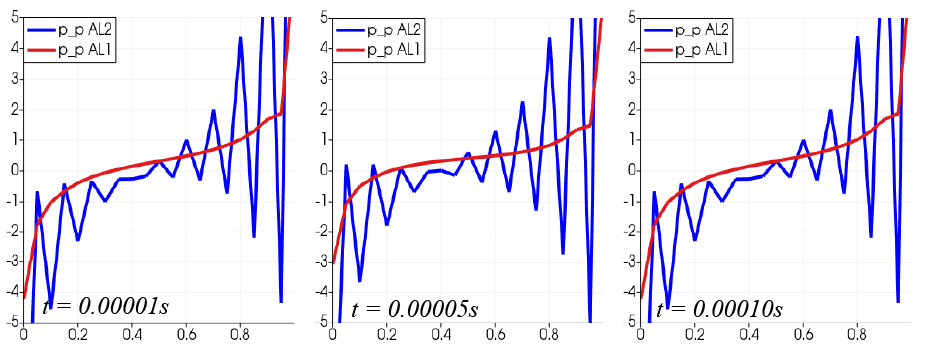} 
    \caption{The comparison of numerical results obtained by different algorithms during the early simulation stages. The profiles show the poroelastic pressure $p_p$ along the bottom boundary $\Gamma_{p}^{D}$, where the proposed scheme (AL 1) successfully suppresses the spurious oscillations observed in the standard method (AL 2).}
    \label{fig:pp_2d}
\end{figure}
It is well-documented \cite{phillips2009overcoming} that when the storage coefficient $c_{0}$ is small and the time step $\Delta t$ is significantly restricted, standard mixed finite element formulation often suffers from volumetric locking, manifesting as severe numerical oscillation in the pressure field $p_{p}$ near the bottom boundary $\Gamma_{p}^{D}$. As shown in the comparisons within Figure \ref{fig:pp_2d}, while algorithm AL 2 from \cite{guo2025fully} exhibits significant instabilities at early time steps ($t = 10^{-5}$ to $10^{-4}s$), our proposed scheme (AL 1) consistently yields a smooth and stable pressure distribution. This robustness against oscillations ensures the reliability of numerical solution for the entire coupled system.
\subsection{Blood flow example}
This example addresses a benchmark problem of blood flow in an idealized artery\cite{dalal2025robin,parrow2026stability}. The system consists of the Stokes equations governing the blood flow in the lumen and the Biot equations modeling the arterial wall, coupled through the interfaces at $y = \pm R$. The geometry is defined by a lumen of radius $R$ and length $L$, with the fluid domain $\Omega_f = (0, L) \times (-R, R)$ surrounded by a poroelastic wall of thickness $r_p$. The inlet and outlet boundaries of the fluid domain are defined as $\Gamma_f^{\text{in}} = \{(0,y) \mid -R < y < R\}$ and $\Gamma_f^{\text{out}} = \{(L,y) \mid -R < y < R\}$, respectively. Similarly, the corresponding boundaries for the poroelastic structure are given by $\Gamma_p^{\text{in}} = \{(0,y) \mid -R-r_p < y < -R \text{ or } R < y < R+r_p\}$ and $\Gamma_p^{\text{out}} = \{(L,y) \mid -R-r_p < y < -R \text{ or } R < y < R+r_p\}$. And the external structure boundary is defined as $\Gamma_p^{\text{ext}} = \{(x, y) \mid 0 < x < L,\ y = -R - r_p\ \text{or}\ y = R + r_p\}$.

To faithfully represent the 3D cylindrical tube from which this 2D problem is derived, we augment equation \eqref{2.6} with an additional term:
\begin{align}
	\rho_{p}\partial_{tt}\mathbf{u}_{p}-\nabla\cdot\bm{\sigma}_{p}(\mathbf{u}_{p},p_{p})+\beta\mathbf{u}_{p}&=\mathbf{f}_{p},&&\quad \mbox{in } \Omega_{p}\times (0,T],\label{7.1}
\end{align}
where $\beta$ is a spring coefficient. The additional term or the last term on the left-hand side of equation \eqref{7.1}-originates from the axially symmetric 2D formulation, which captures the recoil due to circumferential strain \cite{badia2008fluid}. The body force terms $\mathbf{f}_f$ and $\mathbf{f}_p$, as well as the external sources $\phi_{f}$ and $\phi_p$, are set to zero. Moreover, we impose the following boundary conditions: 
	\begin{align*}
		&\mathbf{u}_p = \mathbf{0}, &&\quad \text{on } \Gamma_p^{\mathrm{in}} \cup \Gamma_p^{\mathrm{out}} \times (0, T],\\
		&\mathbf{v}_p\cdot\mathbf{n}_{p} = 0, &&\quad \text{on } \Gamma_p^{\mathrm{in}} \cup \Gamma_p^{\mathrm{out}}\cup\Gamma_p^{\mathrm{ext}} \times (0, T],\\
		&(\pmb{\sigma}_p\mathbf{n}_{p})\cdot\mathbf{n}_{p} = 0, &&\quad \text{on } \Gamma_p^{\mathrm{ext}} \times (0, T],\\
		&\sigma_f \mathbf{n}_f = -p_{\text{in}}(t) \mathbf{n}_f, &&\quad\text{ on } \Gamma^{\text{in}}_f \times (0,T],\\
		&(\sigma_f \mathbf{n}_f)\cdot\mathbf{n}_{f} = 0, &&\quad\text{ on } \Gamma^{\text{out}}_f \times (0,T],
	\end{align*} 
	where $\mathbf{n}^{\text{in}}_f$ and $\mathbf{n}^{\text{out}}_f$ are the outward unit normals separately and
	\begin{align*}
		p_{\text{in}}(t) = 
		\left\lbrace\begin{aligned}
			&0, && \text{if } t > T_{\text{max}}, \\
			&\frac{P_{\text{max}}}{2} \Big[ 1 - \cos\Big( \frac{2\pi t}{T_{\text{max}}} \Big) \Big], && \text{if } t \leq T_{\text{max}},
		\end{aligned}\right.
	\end{align*}
	with $P_{\text{max}} = 13,334~\rm{dyn/cm^2}$ and $T_{\text{max}} = 0.003~\text{s}$.The amplitude of this wave is comparable to the pressure difference between the systolic and diastolic phases of the heartbeat 
 
 Under the assumption of axial symmetry, we have the domain along the horizontal symmetry axis, denoted with $\Gamma^{\text{sym}}_f$, and  impose the following symmetry conditions therein:
 \begin{align*}
 \mathbf{v}_{f}\cdot\bm{n}_{f} = 0,\quad\text{ on } \Gamma^{\text{sym}}_f \times (0,T].
 \end{align*}
 Although the flow distribution and pressure field are often unknown, they are frequently employed in blood flow models. In a system at rest, this inlet boundary condition generates a pressure pulse that propagates through both the fluid and the poroelastic structure. To prevent the pressure pulse from reaching the outlet, the end of the time interval of interest is set to $T = 0.014~\text{s}$. 
	
	The physical parameters for this test are listed in Tables \ref{tab7} and \ref{tab8}. They are set to values that lie within the physiological range for arterial blood flow, thus guaranteeing the relevance of our model. The time step is set to $\Delta t=5\times 10^{-5} \text{s}$, and the mesh size is set to $h=5\times 10^{-2}$. To verify the accuracy of Algorithm~\ref{al1}, the numerical results are compared with those obtained using a strongly coupled method previously published in \cite{cesmelioglu2017analysis}. The solutions obtained with this method are used as reference data. The elastic displacement, fluid pressure, axial fluid velocity at the interface, and the axial fluid velocity at the bottom boundary of the fluid domain are shown in Figure~\ref{fig:2000}. For Algorithm~\ref{al1}, the curves of all variables obtained with $L_1 = L_2 = 10^3$ (AL 1.1) and $L_1 = L_2 = 10^2$ (AL 1.2) are nearly identical and show excellent agreement with the reference data, whereas some discrepancies are observed when $L_1 = L_2 = 10^4$ (AL 1.3) is used. Specifically, the wave propagation in AL 1.3 is slower over time compared to AL 1.1 and AL 1.2. This is because numerical dissipation is present in the algorithm when $L_1 = L_2 = 10^{4}$, whereas the cases with $L_1 = L_2 = 10^{3}$ (or $L_1 = L_2 = 10^{2}$) can be regarded as stabilized. Furthermore, comparison with the numerical results in Example~ 2 of \cite{parrow2026stability} also demonstrates the accuracy of Algorithm~\ref{al1}.
    
	\begin{table}[htbp]
		\centering
		\caption{Physical parameters 1.}\label{tab7}
		\begin{tabular}{l l l l}
			\hline
			Parameter & Symbol & Units & Reference Value \\
			\hline
             Radius &R &cm &0.5\\
             Length &L &cm &6\\
			Poroelastic wall density & $\rho_p$ & g/cm$^3$ & 1.1 \\
			Fluid density & $\rho_f$ & g/cm$^3$ & 1.0 \\
			Dynamic viscosity & $\mu_f$ & g/(cm$\cdot$s) & 0.035 \\
			Spring coefficient & $\beta$ & dyn/cm$^4$ & $4 \times 10^6$ \\
			Storage coefficient & $c_0$ & cm$^2$/dyn & $10^{-3}$ \\
			Permeability & $K$ & cm$^2$ & $ 10^{-6}\mathbf{I}$ \\
			Lam\'e coefficient & $\mu_p$ & dyn/cm$^2$ & $5.575 \times 10^5$ \\
			Lam\'e coefficient & $\lambda_p$ & dyn/cm$^2$ & $1.7 \times 10^6$ \\
			BJS coefficient & $\gamma$ & g/cm$^2\cdot$ s & $10^{3}$ \\
			Biot-Willis constant & $\alpha$ & -- & 1 \\
            Combination constant & $L_{3}$ & -- & $10^{-6}$ \\
			\hline
		\end{tabular}
	\end{table}
\begin{table}[htbp]
		\centering
		\caption{Physical parameters 2.}\label{tab8}
		\begin{tabular}{l l l ll}
			\hline
			Parameter & Symbol & Case 1& Case 2& Case 3 \\
			\hline
			Combination constant & $L_{1}$ & $10^{3}$ &$10^{2}$  &$10^{4}$\\
			Combination constant & $L_{2}$ & $10^{3}$ &$10^{2}$ &$10^{4}$\\
			\hline
		\end{tabular}
	\end{table}
 
\begin{figure}[htbp]
    \centering 
    \begin{subfigure}[b]{0.32\textwidth}
        \centering
        \includegraphics[width=\linewidth]{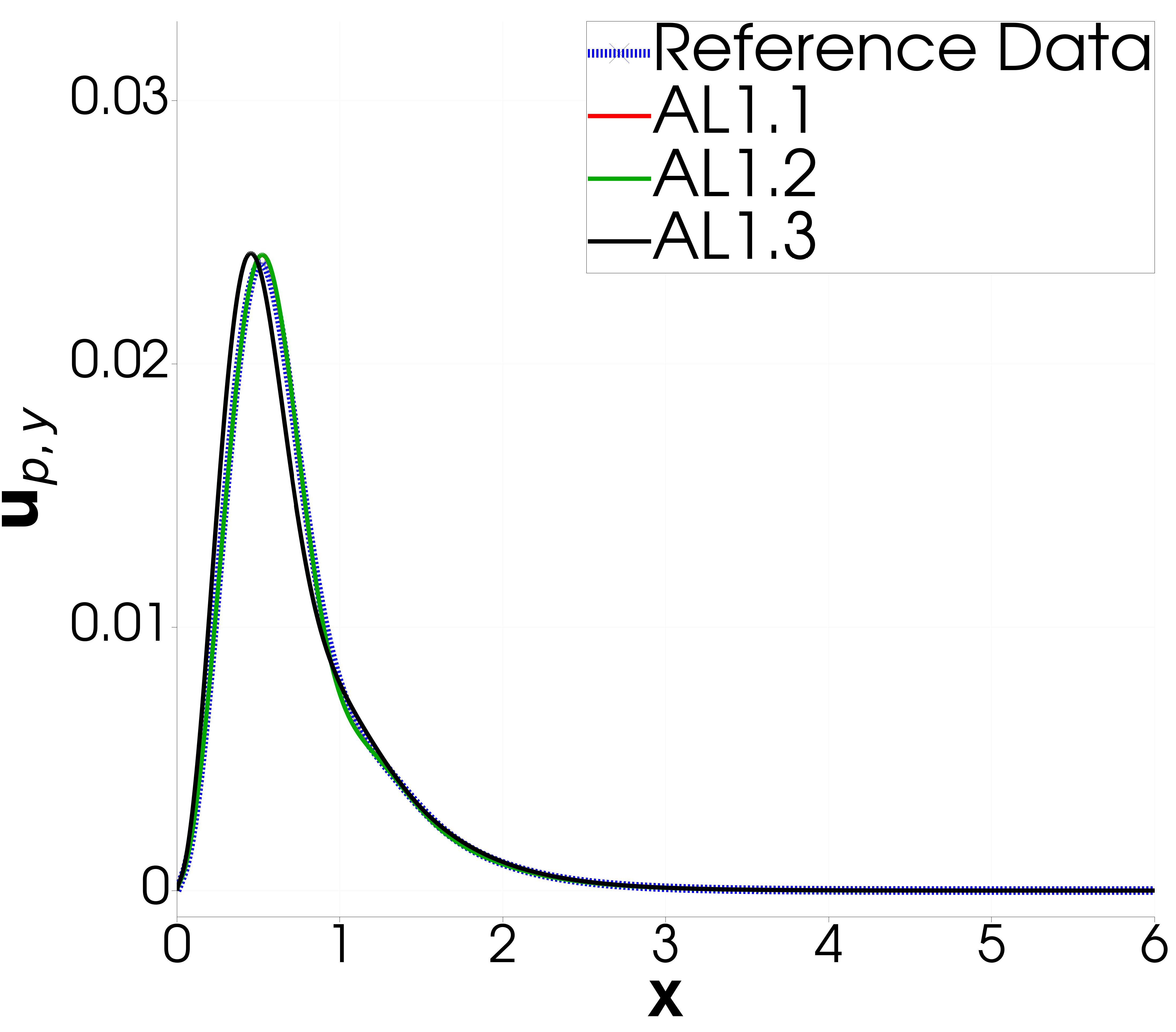} 
        \caption{t = 0.0035s}
    \end{subfigure}
    \hfill 
    \begin{subfigure}[b]{0.32\textwidth}
        \centering
        \includegraphics[width=\linewidth]{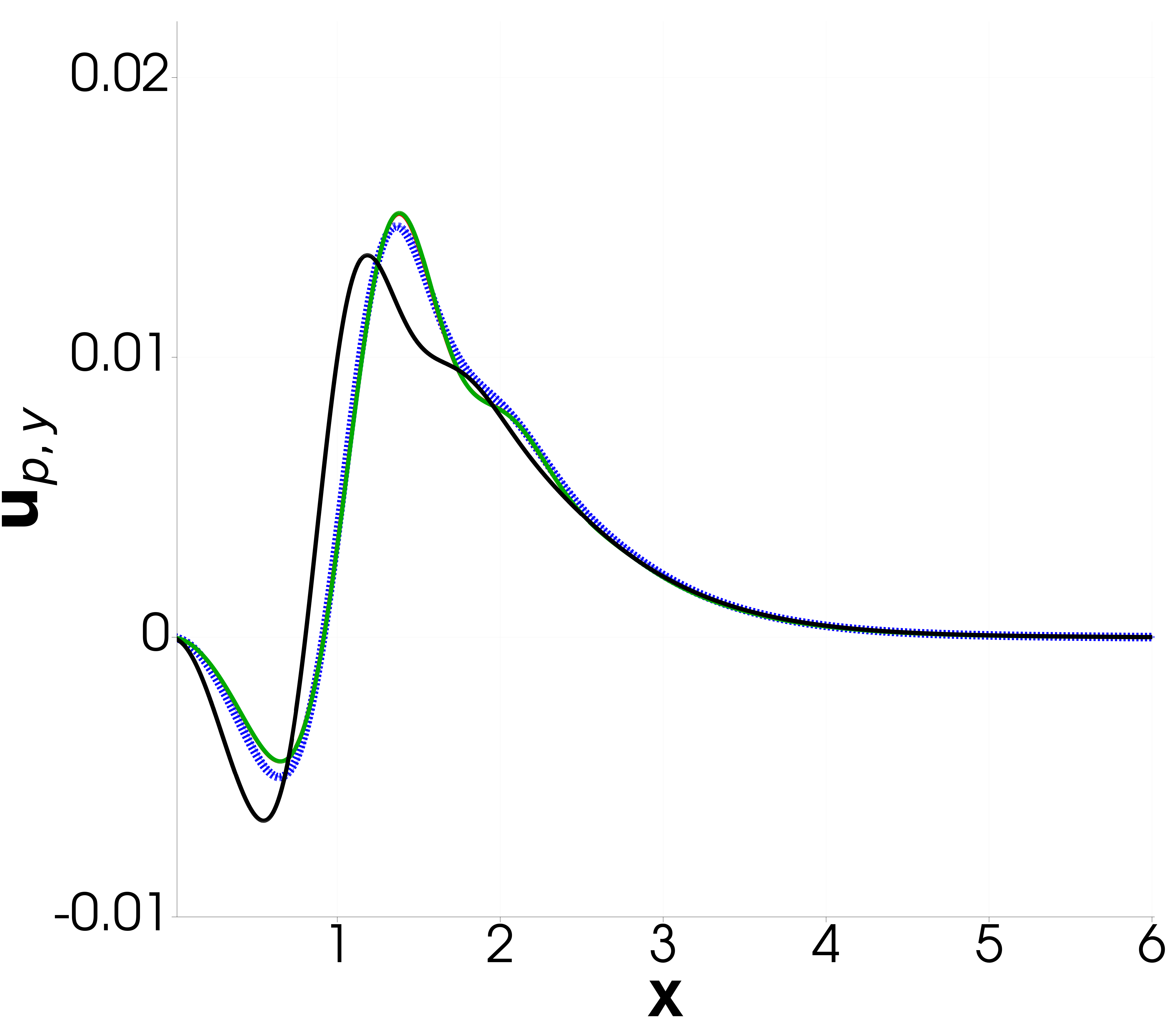}
        \caption{t = 0.0070s}
    \end{subfigure}
    \hfill
    \begin{subfigure}[b]{0.32\textwidth}
        \centering
        \includegraphics[width=\linewidth]{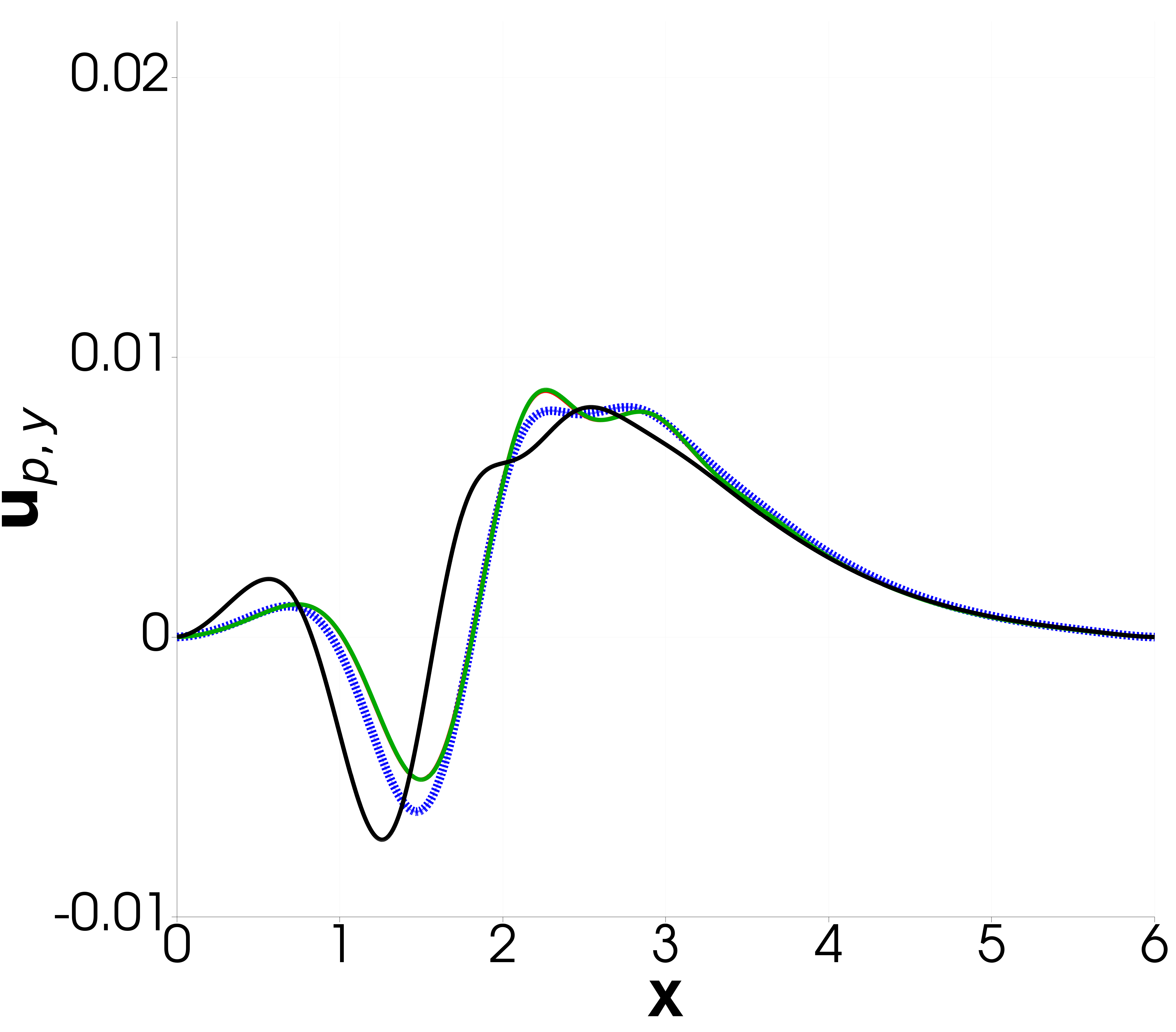}
        \caption{t = 0.0105s}
    \end{subfigure}
    \begin{subfigure}[b]{0.32\textwidth}
        \centering
        \includegraphics[width=\linewidth]{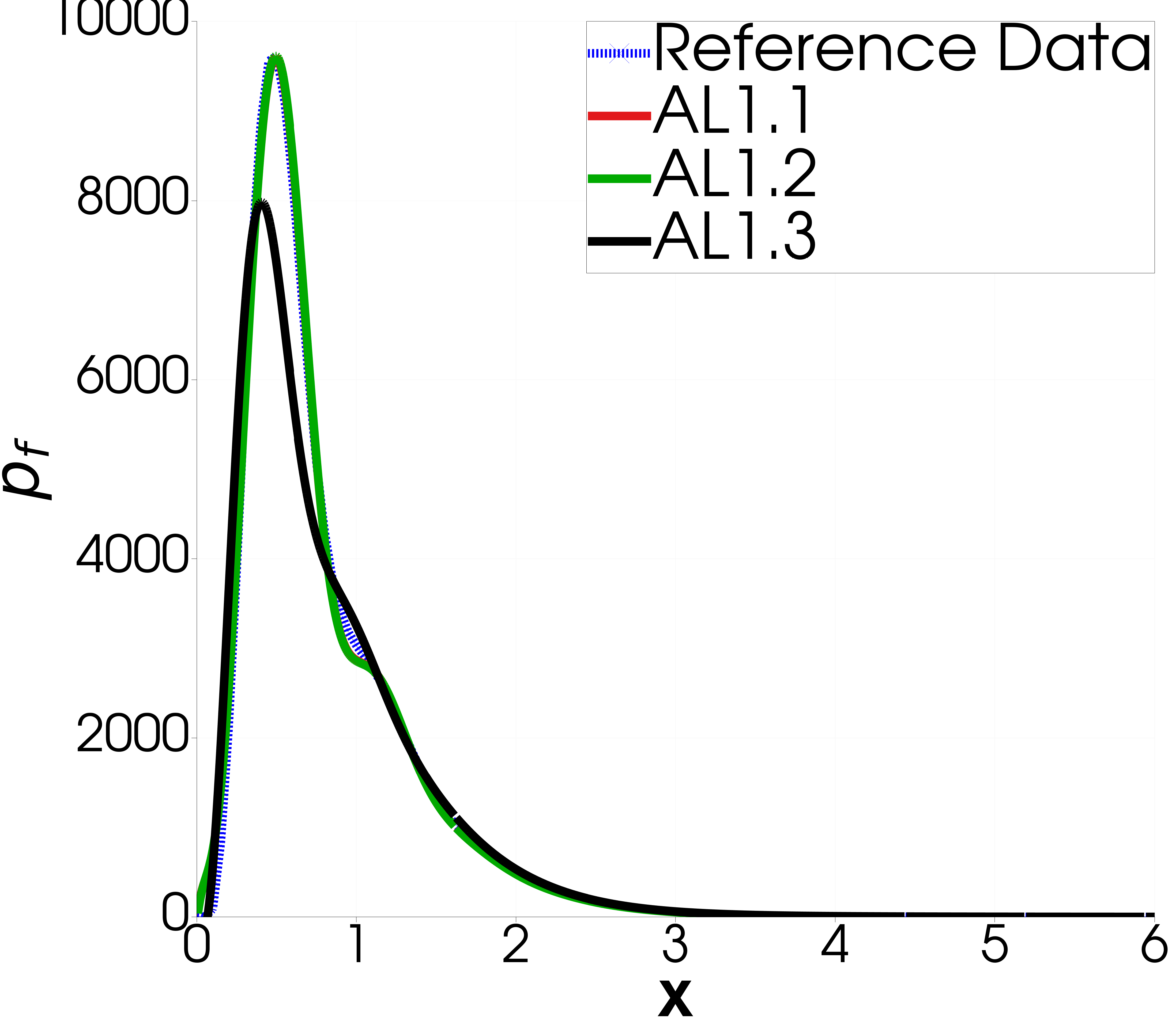} 
        \caption{t = 0.0035s}
    \end{subfigure}
    \hfill 
    \begin{subfigure}[b]{0.32\textwidth}
        \centering
        \includegraphics[width=\linewidth]{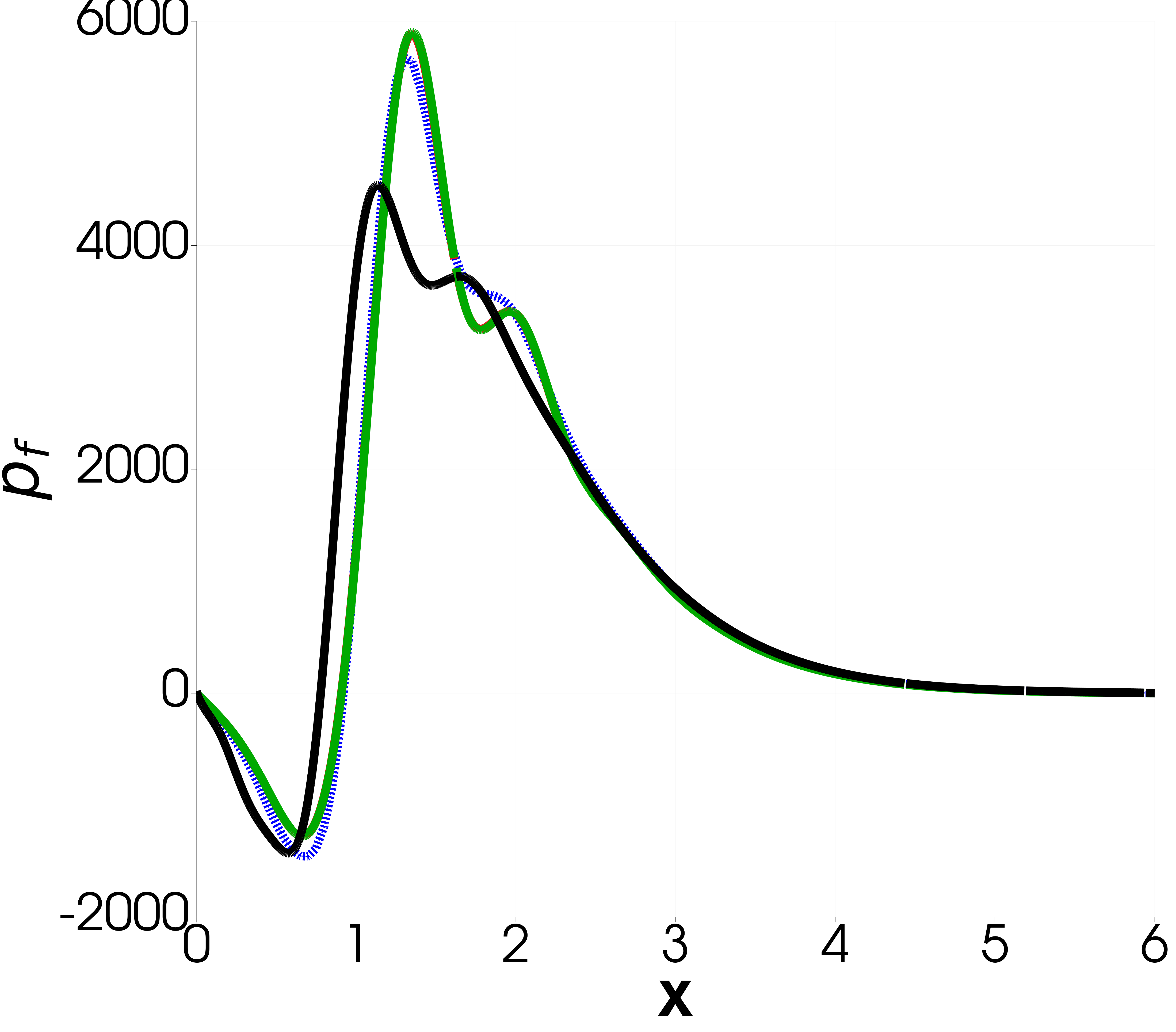}
        \caption{t = 0.0070s}
    \end{subfigure}
    \hfill
    \begin{subfigure}[b]{0.32\textwidth}
        \centering
        \includegraphics[width=\linewidth]{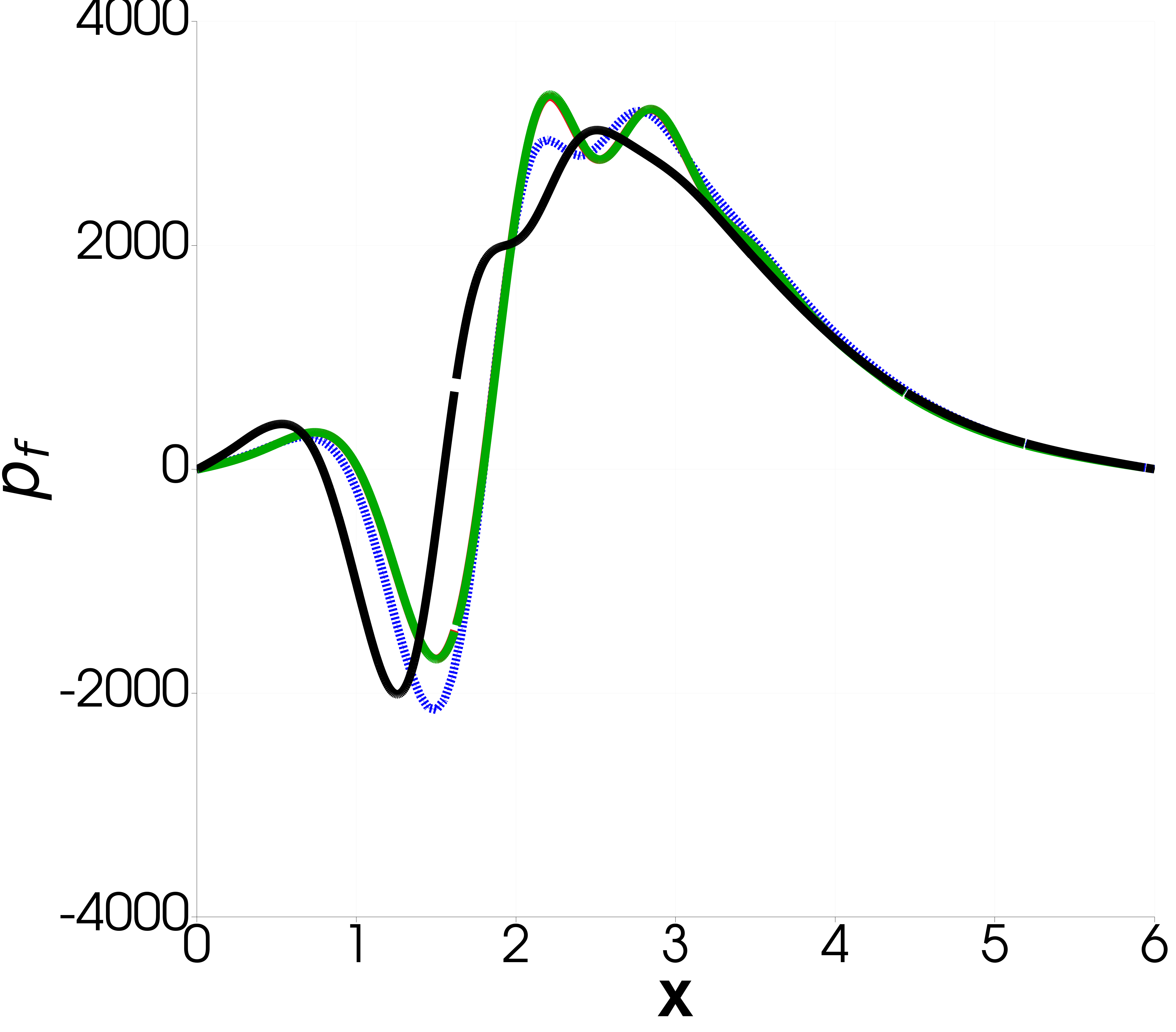}
        \caption{t = 0.0105s}
        \label{fig:eta_105}
    \end{subfigure}
        \begin{subfigure}[b]{0.32\textwidth}
        \centering
        \includegraphics[width=\linewidth]{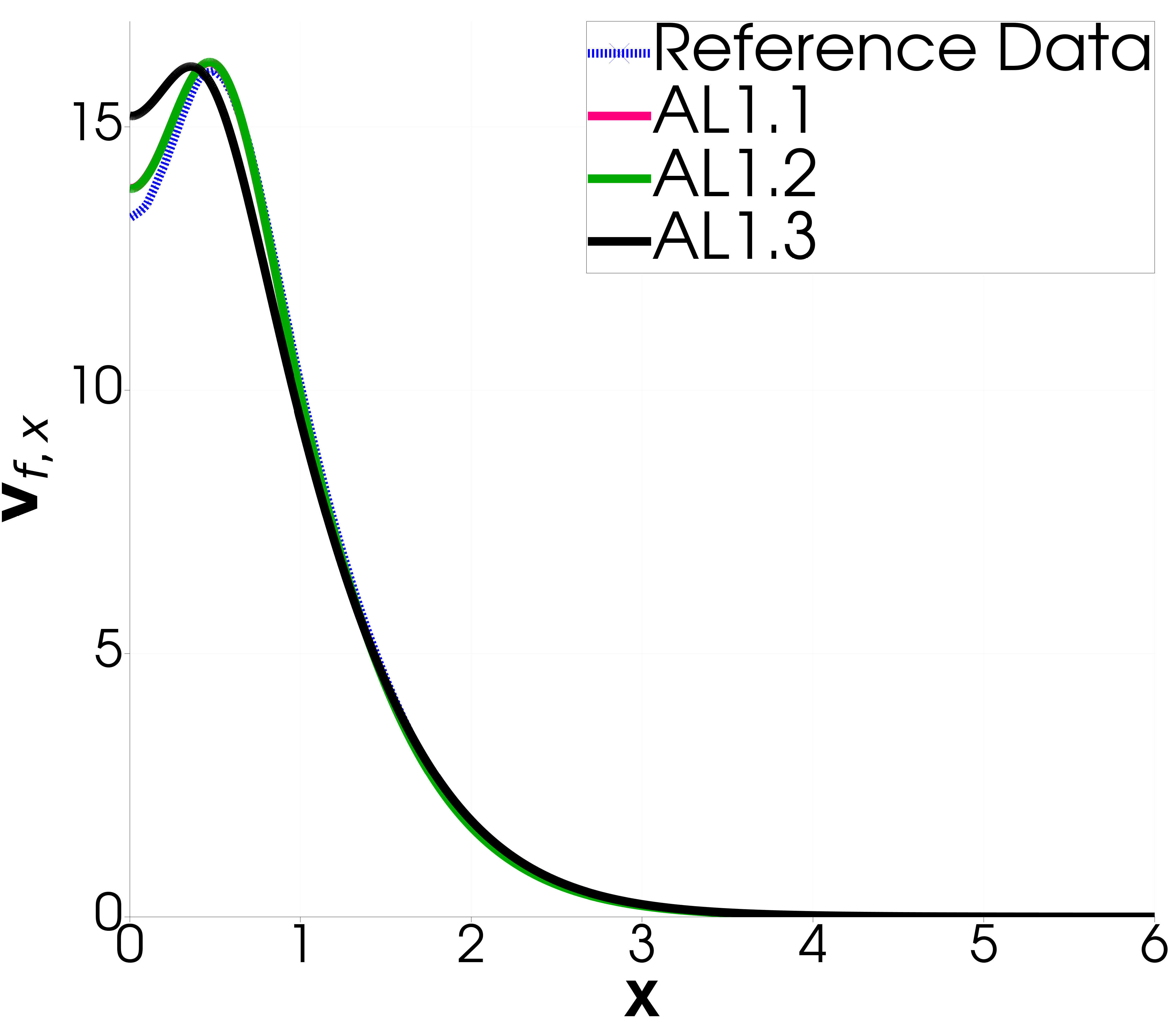} 
        \caption{t = 0.0035s}
    \end{subfigure}
    \hfill 
    \begin{subfigure}[b]{0.32\textwidth}
        \centering
        \includegraphics[width=\linewidth]{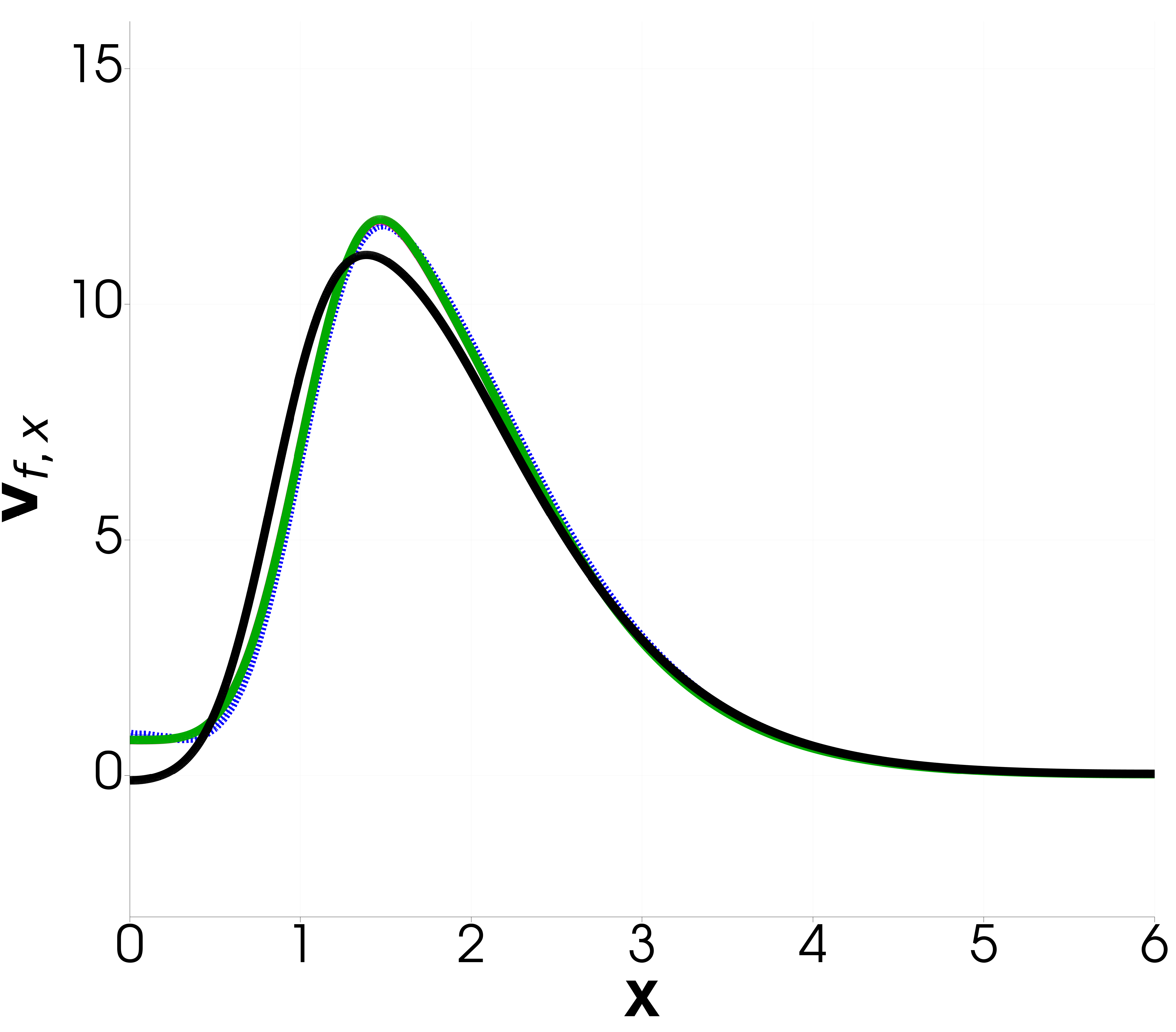}
        \caption{t = 0.0070s}
    \end{subfigure}
    \hfill
    \begin{subfigure}[b]{0.32\textwidth}
        \centering
        \includegraphics[width=\linewidth]{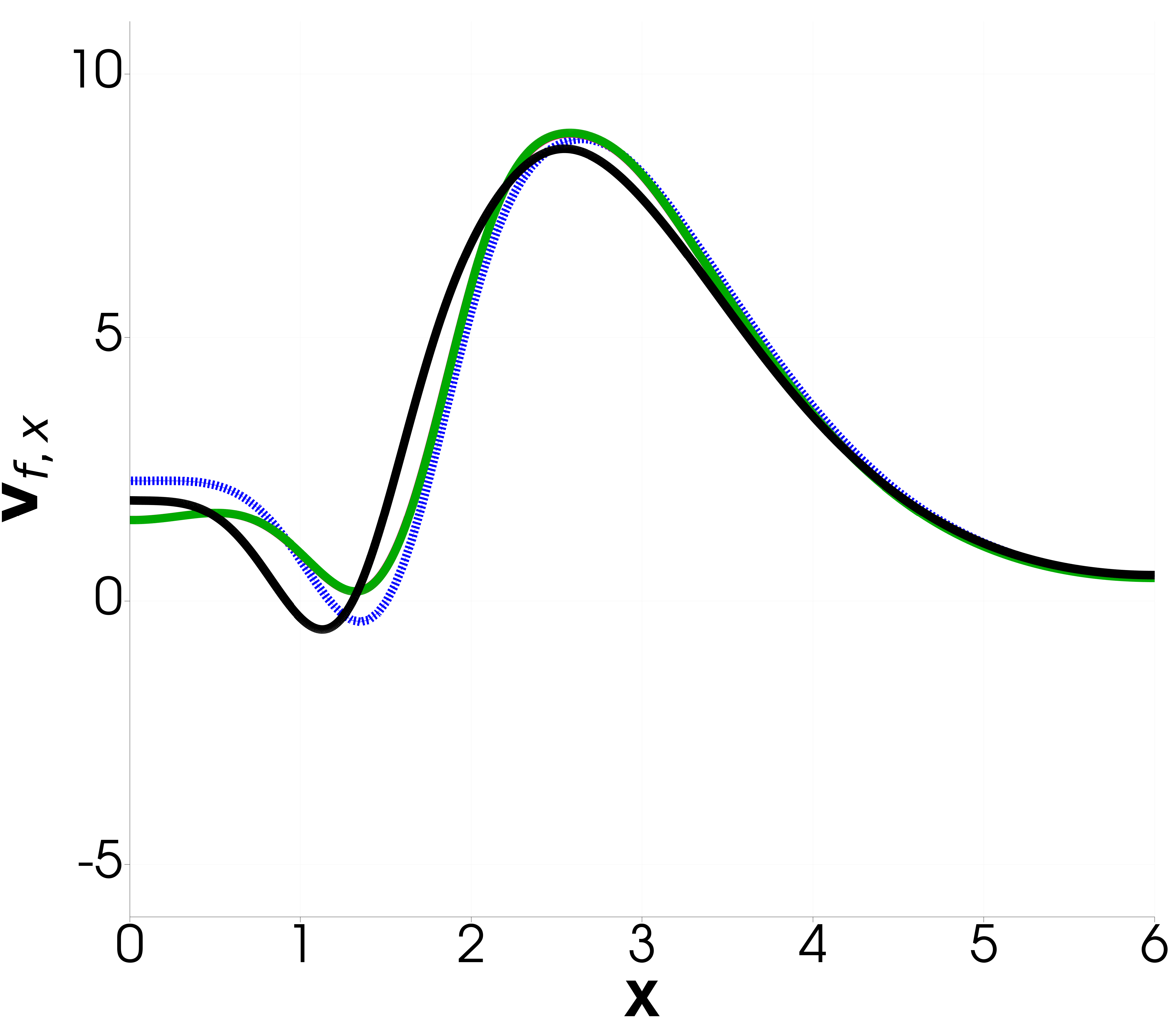}
        \caption{t = 0.0105s}
    \end{subfigure}
        \begin{subfigure}[b]{0.32\textwidth}
        \centering
        \includegraphics[width=\linewidth]{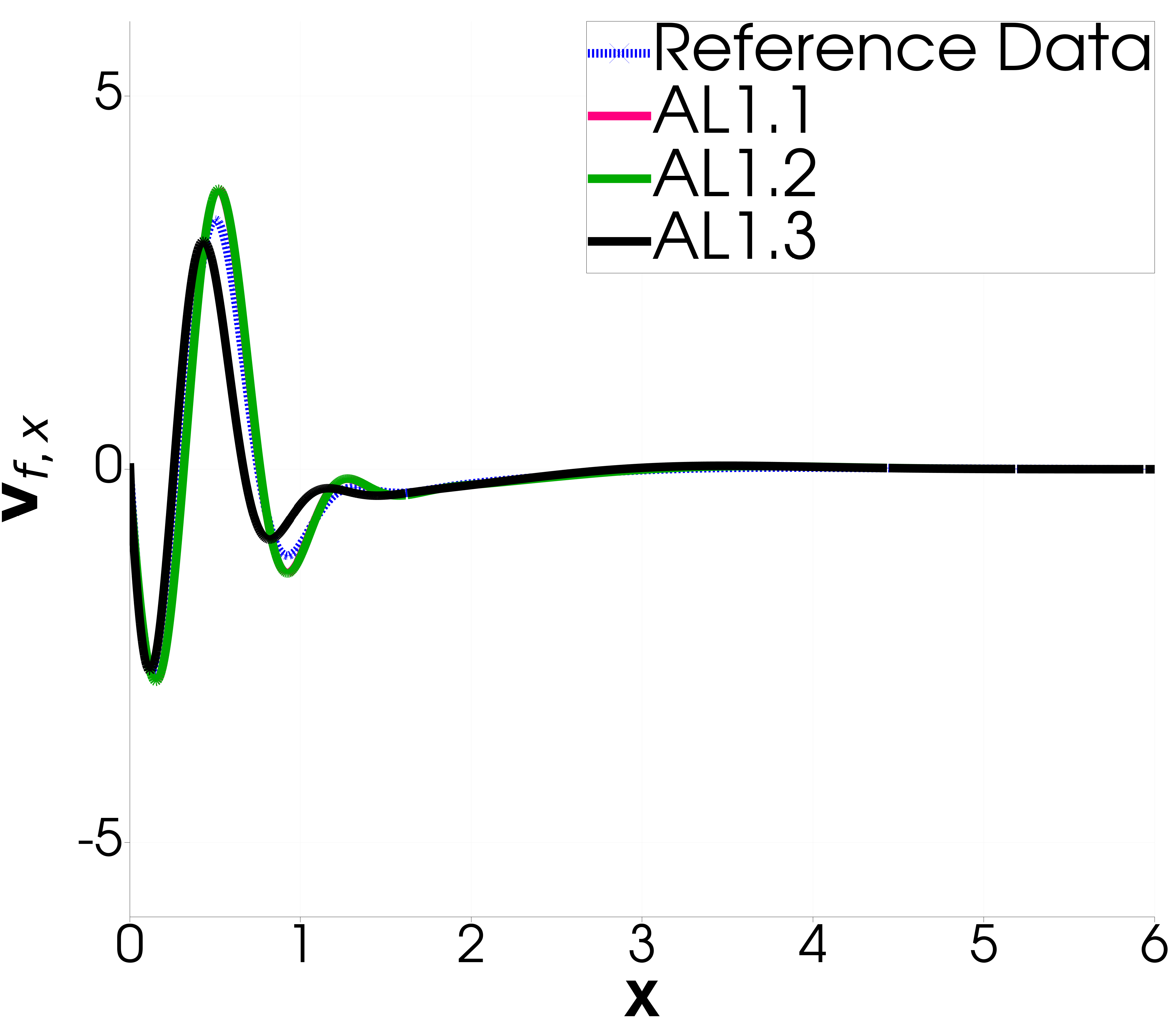} 
        \caption{t = 0.0035s}
    \end{subfigure}
    \hfill 
    \begin{subfigure}[b]{0.32\textwidth}
        \centering
        \includegraphics[width=\linewidth]{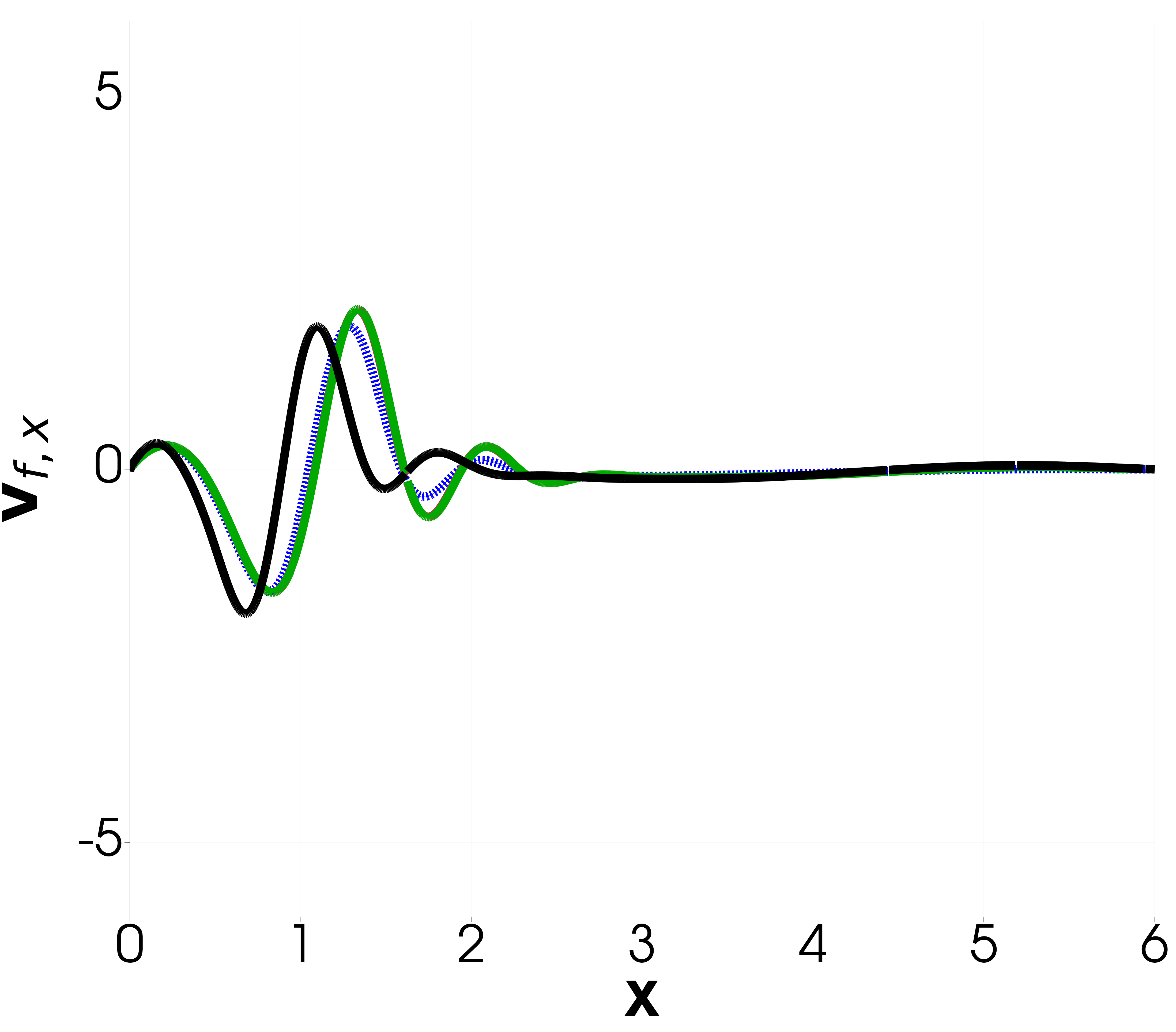}
        \caption{t = 0.0070s}
    \end{subfigure}
    \hfill
    \begin{subfigure}[b]{0.32\textwidth}
        \centering
        \includegraphics[width=\linewidth]{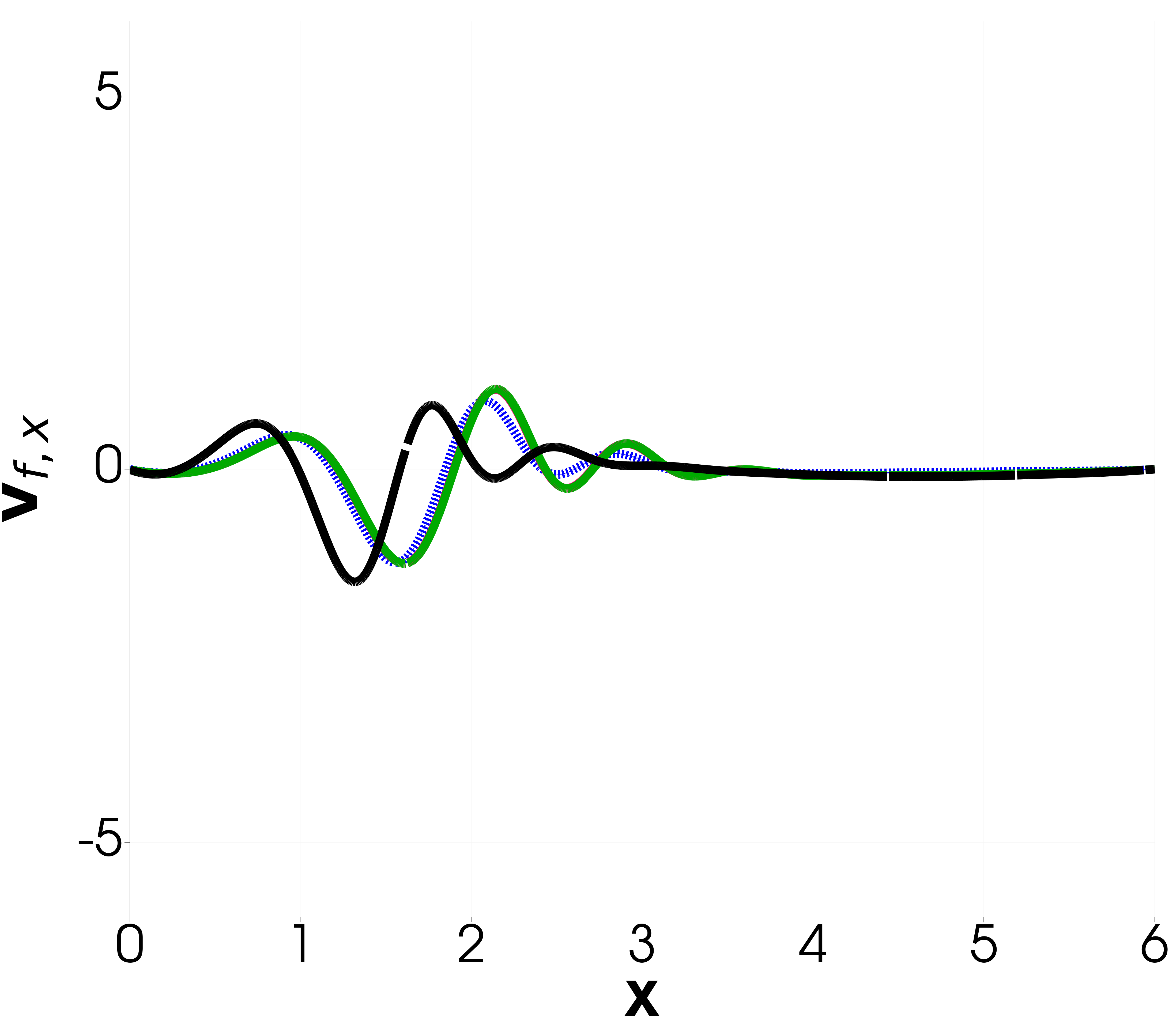}
        \caption{t = 0.0105s}
    \end{subfigure}
    \vspace{1ex}
    \caption{Numerical results for $\mathbf{u}_{p,y},~\mathbf{v}_{f,x}$ and $p_{f}$ on the fluid-poroelastic interface and $\mathbf{v}_{f,x}$ on the bottom fluid boundary obtained from Algorithm \ref{al1}. AL 1.1, AL 1.2, and AL 1.3 refer to the parameter configurations in Table \ref{tab8}, and the reference data obtained using a strongly-coupled method presented in \cite{cesmelioglu2017analysis}.}
    \label{fig:2000} 
\end{figure}
 2D snapshots of the pressure and fluid velocity are shown in Figure \ref{fig:2001}. Specifically, we compare the results obtained using Algorithm~\ref{al1} with $L_{1}=L_{2}=10^{3}$ and the reference data at four different time points. The results from the algorithm are displayed in the top row, and the reference data in the bottom row. Excellent agreement is observed.
\begin{figure}[htbp]
    \centering 
    \includegraphics[width=0.7\textwidth]{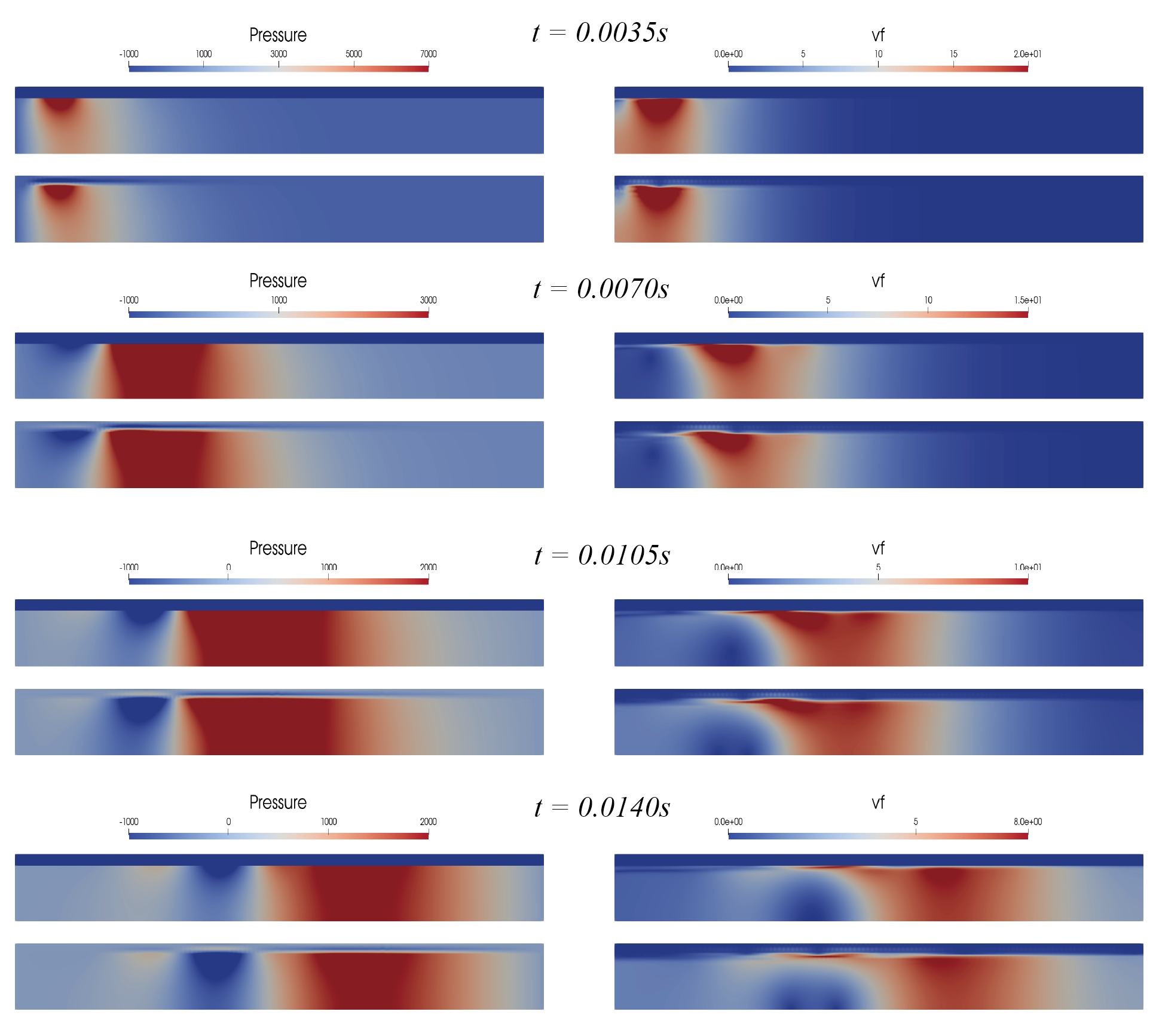} 
    \caption{Surface plots of pressure and fluid velocity for Algorithm~\ref{al1} with Case 1 (top) and the reference solution (bottom) at times $t = 0.0035$, $0.007$, $0.0105$, and $0.014$ s. The domain has been reflected in Paraview to recover the full channel.}
    \label{fig:2001}
\end{figure}

\section{Conclusion}

In this work, we developed a fully decoupled Robin--Robin scheme for the coupled Stokes--Biot fluid--poroelasticity interaction problem. The central ingredient is a four-variable reformulation of the fully dynamic Biot system, obtained through the introduction of two auxiliary variables. This reformulation is designed to improve robustness with respect to locking-related extreme parameters and, crucially, preserves the original FPSI interface conditions.

Based on this reformulation, we derived Robin--Robin transmission conditions that lead to a fully parallel time-stepping algorithm, in which the fluid and poroelastic subproblems can be solved independently without sub-iterations. Compared with existing partitioned Robin-type approaches, and in particular with our previous decoupling scheme for the standard Stokes--Biot system, the present work provides a substantial extension: it consistently incorporates a locking-aware reformulation into the fully coupled FPSI framework, preserves the original interface coupling structure, and yields a rigorous fully discrete analysis including equivalence of the reformulated and original systems, unconditional stability, and optimal-order error estimates.

The numerical experiments confirm the theoretical convergence results and demonstrate robust performance in parameter regimes associated with Poisson locking and early-time pressure oscillations. These results indicate that the proposed approach provides an effective and mathematically justified framework for parameter-robust partitioned simulation of FPSI problems. Future work will focus on extensions to more general fluid models, higher-order time discretizations, and interface treatments on nonmatching meshes.
\appendix
\section{ The proof of Theorem \ref{thm3.2}}
\label{app:proof}
Without loss of generality, we assume $\mathbf{f}_{f}=\mathbf{f}_{p}=\mathbf{g}=\mathbf{0}$ and $\phi_{f}=\phi_{p}=0$. Substituting $R_{1}$ and $R_{2} $ into \eqref{3.1} and setting $(\mathbf{w}_{f},q_{f})=(\mathbf{v}_{f},p_{f})$ in \eqref{3.1}-\eqref{3.2}, then integrating the resulting equations with regard to $t$ over $(0,s)$ for $s\in(0,T]$ and adding them, we obtain
\begin{align}
   &\frac{\rho_{f}}{2}\|\mathbf{v}_{f}(s)\|_{L^{2}(\Omega_{f})}^{2}+\int_{0}^{s} \Big[2\mu_{f}\|\varepsilon(\mathbf{v}_{f})\|_{L^{2}(\Omega_{f})}^{2}+c_{BJS}\|\mathcal{M}_{f}(\mathbf{v}_{f})\|_{L^{2}(\Gamma)}^{2}\Big]\,dt\label{A.1}\\
   &=\frac{\rho_{f}}{2}\|\mathbf{v}_{f}(0)\|_{L^{2}(\Omega_{f})}^{2}-\int_{0}^{s}\langle p_{p},\mathbf{v}_{f}\cdot\mathbf{n}_{f}\rangle_{\Gamma}\,dt\nonumber\\
   &\quad+\int_{0}^{s}c_{BJS}\langle \mathcal{M}_{p}(\partial_{t}\mathbf{u}_{p}),\mathcal{M}_{f}(\mathbf{v}_{f}\rangle_{\Gamma}\,dt.\nonumber
\end{align}
Similarly, substituting $R_{3}$--$R_{5}$ into \eqref{3.3}-\eqref{3.6} and differentiating \eqref{3.5} with respect to $t$ once, then setting $(\mathbf{z}_{p},\mathbf{w}_{p},\varphi_{p},\psi_{p})=(\rho_{p}\partial_{t}\mathbf{v}_{p},\partial_{t}\mathbf{u}_{p},\beta_{p},p_{p})$ in \eqref{3.3}-\eqref{3.6}, integrating the resulting equations with regard to $t$ over $(0,s)$ for $s\in(0,T]$ and adding them together, we arrive at
\begin{align}
&\frac{\rho_{p}}{2}\|\mathbf{v}_{p}(s)\|_{L^{2}(\Omega_{p})}^{2}+\mu_{p}\|\varepsilon(\mathbf{u}_{p}(s))\|_{L^{2}(\Omega_{p})}^{2}\label{A.2}\\
&\quad+\frac{1}{2\lambda_{p}}\|\alpha p_{p}(s)-\beta_{p}(s)\|_{L^{2}(\Omega_{p})}^{2}+\frac{c_{0}}{2}\|p_{p}(s)\|_{L^{2}(\Omega_{p})}^{2}\nonumber\\
&\quad+\int_{0}^{s}\Big[c_{BJS}\|\mathcal{M}_{p}(\partial_{t}\mathbf{u}_{p})\|_{L^{2}(\Gamma)}^{2}+\mu_{f}^{-1}\|K^{\frac{1}{2}}\nabla p_{p}\|_{L^{2}(\Omega_{p})}^{2}\Big]\,dt\nonumber\\
&=\frac{\rho_{p}}{2}\|\mathbf{v}_{p}(0)\|_{L^{2}(\Omega_{p})}^{2}+\mu_{p}\|\varepsilon(\mathbf{u}_{p}(0))\|_{L^{2}(\Omega_{p})}^{2}+\frac{1}{2\lambda_{p}}\|\alpha p_{p}(0)-\beta_{p}(0)\|_{L^{2}(\Omega_{p})}^{2}\nonumber\\
&\quad+\frac{c_{0}}{2}\|p_{p}(0)\|_{L^{2}(\Omega_{p})}^{2}-\int_{0}^{s}\langle p_{p},\partial_{t}\mathbf{u}_{p}\cdot\mathbf{n}_{p}\rangle_{\Gamma}\,dt\nonumber\\
&\quad+\int_{0}^{s}\Big[c_{BJS}\langle \mathcal{M}_{f}(\mathbf{v}_{f}),\mathcal{M}_{p}(\partial_{t}\mathbf{u}_{p})\rangle_{\Gamma}+\langle \mathbf{v}_{f}\cdot\mathbf{n}_{f}+\partial_{t}\mathbf{u}_{p}\cdot\mathbf{n}_{p},p_{p}\rangle_{\Gamma}\Big]\,dt.\nonumber
\end{align}
Adding \eqref{A.1} and \eqref{A.2}, then using the Cauchy-Schwarz and Young inequalities for the result, we further get
\begin{align}
&\frac{\rho_{f}}{2}\|\mathbf{v}_{f}(s)\|_{L^{2}(\Omega_{f})}^{2}+\frac{\rho_{p}}{2}\|\mathbf{v}_{p}(s)\|_{L^{2}(\Omega_{p})}^{2}+\mu_{p}\|\varepsilon(\mathbf{u}_{p}(s))\|_{L^{2}(\Omega_{p})}^{2}\label{A.3}\\
&\quad+\frac{c_{0}}{2}\|p_{p}(s)\|_{L^{2}(\Omega_{p})}^{2}+\frac{1}{2\lambda_{p}}\|\alpha p_{p}(s)-\beta_{p}(s)\|_{L^{2}(\Omega_{p})}^{2}\nonumber\\
&\quad+\int_{0}^{s} \Big[2\mu_{f}\|\varepsilon(\mathbf{v}_{f})\|_{L^{2}(\Omega_{f})}^{2}+\mu_{f}^{-1}\|K^{\frac{1}{2}}\nabla p_{p}\|_{L^{2}(\Omega_{p})}^{2}\Big]\,dt\nonumber\\
&\leq\frac{\rho_{f}}{2}\|\mathbf{v}_{f}(0)\|_{L^{2}(\Omega_{f})}^{2}+\frac{\rho_{p}}{2}\|\mathbf{v}_{p}(0)\|_{L^{2}(\Omega_{p})}^{2}+\mu_{p}\|\varepsilon(\mathbf{u}_{p}(0))\|_{L^{2}(\Omega_{p})}^{2}\nonumber\\
&\quad+\frac{c_{0}}{2}\|p_{p}(0)\|_{L^{2}(\Omega_{p})}^{2}+\frac{1}{2\lambda_{p}}\|\alpha p_{p}(s)-\beta_{p}(0)\|_{L^{2}(\Omega_{p})}^{2}.\nonumber
\end{align}
Consequently, inequality \eqref{A.3} yields uniform bounds for the Galerkin approximations, from which the existence of a solution follows by the standard Galerkin procedure and a compactness argument \cite{evans2022partial}.

Next, we prove the uniqueness of the solution to problem \eqref{3.1}-\eqref{3.6}. Assume that 
$(\mathbf{v}_{f1},p_{f1},\mathbf{v}_{p1},
\mathbf{u}_{p1},\beta_{p1},p_{p1})$
and
$(\mathbf{v}_{f2},p_{f2},\mathbf{v}_{p2},\mathbf{u}_{p2},\beta_{p2},p_{p2})$
are two distinct solutions of problem \eqref{3.1}-\eqref{3.6}. 
Substituting $R_{1}$--$R_{5}$ into \eqref{3.1}-\eqref{3.6} yields the following system:
\begin{align}
&\rho_{f}\big(\partial_{t}\mathbf{v}_{f1}-\partial_{t}\mathbf{v}_{f2},\mathbf{w}_{f}\big)_{\Omega_{f}}
+2\mu_{f}\big(\varepsilon(\mathbf{v}_{f1}-\mathbf{v}_{f2}),\varepsilon(\mathbf{w}_{f})\big)_{\Omega_{f}}
\label{A.4}\\
&\quad- \big(p_{f1}-p_{f2},\nabla\cdot\mathbf{w}_{f}\big)_{\Omega_{f}} +c_{BJS}\langle \mathcal{M}_{f}(\mathbf{v}_{f1}-\mathbf{v}_{f2}),\mathcal{M}_{f}(\mathbf{w}_{f})\rangle_{\Gamma}\nonumber\\
&=-\langle p_{p1}-p_{p2},\mathbf{w}_{f}\cdot\mathbf{n}_{f}\rangle_{\Gamma}
+c_{BJS}\langle \mathcal{M}_{p}(\partial_{t}\mathbf{u}_{p1}-\partial_{t}\mathbf{u}_{p2}),\mathcal{M}_{f}(\mathbf{w}_{f})\rangle_{\Gamma},\nonumber\\ &\big(\nabla\cdot(\mathbf{v}_{f1}-\mathbf{v}_{f2}),q_{f}\big)_{\Omega_{f}}=0.\label{A.5}\\
&\big(\mathbf{v}_{p1}-\mathbf{v}_{p2},\mathbf{z}_{p}\big)_{\Omega_{p}}-\big(\partial_{t}\mathbf{u}_{p1}-\partial_{t}\mathbf{u}_{p2},\mathbf{z}_{p}\big)_{\Omega_{p}}=0,\label{A.6}\\
&\rho_{p}\big(\partial_{t}\mathbf{v}_{p1}-\partial_{t}\mathbf{v}_{p2},\mathbf{w}_{p}\big)_{\Omega_{p}}+2\mu_{p}\big(\varepsilon(\mathbf{u}_{p1}-\mathbf{u}_{p2}),\varepsilon(\mathbf{w}_{p})\big)_{\Omega_{p}}\label{A.7}\\
&\quad-\big(\beta_{p1}-\beta_{p2},\nabla\cdot\mathbf{w}_{p}\big)_{\Omega_{p}}+c_{BJS}\langle\mathcal{M}_{p}(\partial_{t}\mathbf{u}_{p1}-\partial_{t}\mathbf{u}_{p2}),\mathcal{M}_{p}(\mathbf{w}_{p})\rangle_{\Gamma}\nonumber\\
&=-\langle p_{p1}-p_{p2},\mathbf{w}_{p}\cdot\mathbf{n}_{p}\rangle_{\Gamma}
+c_{BJS}\langle \mathcal{M}_{f}(\mathbf{v}_{f1}-\mathbf{v}_{f2}),\mathcal{M}_{p}(\mathbf{w}_{p})\rangle_{\Gamma},\nonumber\\
&\frac{1}{\lambda_{p}}\big(\beta_{p1}-\beta_{p2},\varphi_{p}\big)_{\Omega_{p}}+\big(\nabla\cdot(\mathbf{u}_{p1}-\mathbf{u}_{p2}),\varphi_{p}\big)_{\Omega_{p}}=\frac{\alpha}{\lambda_{p}}\big(p_{p1}-p_{p2},\varphi_{p}\big)_{\Omega_{p}},\label{A.8}
\end{align}
\begin{align}
&\big(c_{0}+\frac{\alpha^{2}}{\lambda_{p}}\big)\big(\partial_{t}p_{p1}-\partial_{t}p_{p2},\psi_{p}\big)_{\Omega_{p}}-\frac{\alpha}{\lambda_{p}}\big(\partial_{t}\beta_{p1}-\partial_{t}\beta_{p2},\psi_{p}\big)_{\Omega_{p}}\label{A.9}\\
&\quad+\big(\mu_{f}^{-1}K\nabla (p_{p1}-p_{p2}),\nabla\psi_{p}\big)_{\Omega_{p}}\nonumber\\
&=\langle (\mathbf{v}_{f1}-\mathbf{v}_{f2})\cdot\mathbf{n}_{f},\psi_{p}\rangle_{\Gamma}+\langle (\partial_{t}\mathbf{u}_{p1}-\partial_{t}\mathbf{u}_{p2})\cdot\mathbf{n}_{p},\psi_{p}\rangle_{\Gamma}\nonumber.
\end{align}

Similar to the inequality \eqref{A.3}, we have the following estimate for \eqref{A.4}-\eqref{A.9}:
\begin{align}
&\frac{\rho_{f}}{2}\|\mathbf{v}_{f1}(s)-\mathbf{v}_{f2}(s)\|_{L^{2}(\Omega_{f})}^{2}+\frac{\rho_{p}}{2}\|\mathbf{v}_{p1}(s)-\mathbf{v}_{p2}(s)\|_{L^{2}(\Omega_{p})}^{2}\label{A.11}\\
&+\mu_{p}\|\varepsilon(\mathbf{u}_{p1}(s)-\mathbf{u}_{p2}(s))\|_{L^{2}(\Omega_{p})}^{2}+\frac{c_{0}}{2}\|p_{p1}(s)-p_{p2}(s)\|_{L^{2}(\Omega_{p})}^{2}\nonumber\\
&\quad+\frac{1}{2\lambda_{p}}\|\alpha p_{p1}(s)-\beta_{p1}(s)-(\alpha p_{p2}(s)-\beta_{p2}(s))\|_{L^{2}(\Omega_{p})}^{2}\nonumber\\
&\quad+\int_{0}^{s} \Big[2\mu_{f}\|\varepsilon(\mathbf{v}_{f1}-\mathbf{v}_{f2})\|_{L^{2}(\Omega_{f})}^{2}+\mu_{f}^{-1}\|K^{\frac{1}{2}}\nabla (p_{p1}-p_{p2})\|_{L^{2}(\Omega_{p})}^{2}\Big]\,dt\leq0.\nonumber
\end{align}
By inequality \eqref{A.11}, we conclude that
$\mathbf{v}_{f1}=\mathbf{v}_{f2}$,
$\mathbf{v}_{p1}=\mathbf{v}_{p2}$,
$\mathbf{u}_{p1}=\mathbf{u}_{p2}$,
$\beta_{p1}=\beta_{p2}$,
and
$p_{p1}=p_{p2}$.
Moreover, invoking the inf-sup condition yields
$p_{f1}=p_{f2}$. The proof is complete.

\bibliographystyle{siamplain}
\bibliography{refs}

\end{document}